\documentclass[
    12pt,       
    a4paper,
]{article}

\usepackage[margin=1in]{geometry}
\usepackage[onehalfspacing]{setspace}

\usepackage{siunitx}
\usepackage{amsthm}
\usepackage{amssymb, mathtools}
\usepackage{amsmath}
\usepackage{mleftright}
\usepackage{marvosym}

\usepackage{mathtools}
\usepackage{tikz}
\usetikzlibrary{calc,intersections,through,arrows.meta,external}
\usepackage{tkz-euclide}

\usepackage{placeins}

\usepackage[color=orange!25]{todonotes}
\presetkeys{todonotes}{size=\setstretch{1}}{} 
\usepackage{booktabs}
\usepackage{multirow}
\usepackage{threeparttable}

\usepackage{xspace}
\usepackage{stmaryrd}
\usepackage{datetime}
\usepackage{graphicx}
\usepackage[labelformat=simple]{subcaption}

\usepackage[american]{babel}
\usepackage{microtype}
\usepackage{spverbatim}
\usepackage[hidelinks,
            colorlinks = false,
            allcolors=blue]{hyperref} 
\usepackage{enumitem}
\usepackage{bm}
\usepackage{rotating}
\usepackage{cancel}

\usepackage{algorithm}
\usepackage{algpseudocodex}

\usepackage{nicefrac}
\usepackage[title]{appendix}

\usepackage{aligned-overset}

\usepackage{csquotes}
\usepackage[
    date=year,
    bibstyle=ext-numeric,
    citestyle=numeric-comp,
    sorting=nyt,
    defernumbers=true,
    sortcites=true,
    maxbibnames=9999,
    maxcitenames=2,
    giveninits=true,
    backend=biber
    ]{biblatex}
\stdpunctuation{}

\DeclareFieldFormat%
[article,inbook,incollection,inproceedings,patent,thesis,unpublished]
{titlecase:title}{\MakeSentenceCase*{#1}}

\DefineBibliographyExtras{british}{%
  \DeclareBibstringSet{latin}{andothers,ibidem}%
  \DeclareBibstringSetFormat{latin}{\mkbibemph{#1}}%
}
\UndefineBibliographyExtras{british}{%
  \UndeclareBibstringSet{latin}%
}

\theoremstyle{thmstyleone}%
\newtheorem{theorem}{Theorem}
\newtheorem{proposition}[theorem]{Proposition}%
\newtheorem{lemma}[theorem]{Lemma}%
\newtheorem{corollary}[theorem]{Corollary}%

\theoremstyle{thmstyletwo}%
\newtheorem{example}{Example}%
\newtheorem{remark}{Remark}%

\theoremstyle{thmstylethree}%
\newtheorem{definition}{Definition}%
\newtheorem{assumption}{Assumption}%

\graphicspath{{figures/}}

\newcommand{\T}{\mathsf{T}}

\newcolumntype{C}{>{{}}c<{{}}}

\newcommand{\appri}{\mathcal{A}}
\newcommand{\lowi}{\appri^*}
\newcommand{\lowieps}{\appri^{1+\varepsilon}}
\newcommand{\lowiepsarg}[1]{\appri^{#1}}

\newcommand{\subdi}{\mathcal{G}}
\newcommand{\subdielem}{G}
\newcommand{\subdivert}{g}
\newcommand{\coneHull}{W}
\newcommand{\lwfunc}{\mathcal{L}}

\newcommand{\conv}{\mathsf{conv}}
\newcommand{\cone}{\mathsf{cone}}
\newcommand{\vertx}{\mathsf{vert}}

\newcommand{\bigO}{\mathcal{O}}
\newcommand{\poly}{\mathrm{poly}}

\newcommand{\optset}{S}
\newcommand{\oracle}{\mathsf{Alg}}

\newcommand{\Vpoly}{$\mathcal{V}$-representation}
\newcommand{\Hpoly}{$\mathcal{H}$-representation}

\newcommand{\orsols}{X_{\textsf{pol}}}

\newcommand{\novert}{\nu_v}
\newcommand{\nofaces}{\nu_f}
\newcommand{\nofacet}{\nu_F}

\newcommand{\multiSingleBase}{\mathrm{MOP}^1}
\newcommand{\multiBiBase}{\mathrm{MOP}^2}
\newcommand{\paraBiBase}{\Pi^2}

\newcommand{\nonApxArea}{\Delta}
\newcommand{\Xkone}{X^1}
\newcommand{\Ykone}{Y^1}

\newcommand{\YEkone}{Y_{\mathsf{E}}^1}
\newcommand{\Xktwo}{X^2}
\newcommand{\Yktwo}{Y^2}
\newcommand{\XEktwo}{X_{\mathsf{E}}^2}
\newcommand{\YEktwo}{Y_{\mathsf{E}}^2}

\newcommand{\vol}{\mathsf{vol}}

\newcommand{\Smax}{S_{\max}}
\newcommand{\smax}{s_{\max}}

\renewcommand{\det}{\mathsf{det}}

\begin{document}

\title{An Adaptive Algorithm for the Approximation of General Linear-Parametric Optimization Problems}

\author{Levin Nemesch\textsuperscript{\Letter}\footnote{RPTU Kaiserslautern-Landau, Department of Mathematics, Paul-Ehrlich-Str. 31, 67663 Kaiserslautern, Germany, email:\{l.nemesch,stefan.ruzika\}@math.rptu.de}, Stefan Ruzika\footnotemark[\value{footnote}]~,
     Clemens Thielen\footnote{Technical University of Munich, Campus Straubing for Biotechnology and Sustainability, Professorship of Optimization and Sustainable Decision Making, Am Essigberg~3, 94315~Straubing, Germany, email:\{clemens.thielen,alina.wittmann\}@tum.de}, Alina Wittmann\footnotemark[\value{footnote}]
}


\maketitle

\begin{abstract}

\singlespacing\small

Linear-multi-parametric optimization problems are a widely studied class of optimization problems.
The objective function in such a problem is affine linear dependent on a parameter vector, and the goal is to compute a set of solutions that contains an optimal solution for every fixed parameter vector.
However, this is known to be computationally challenging: The underlying non-parametric problem might be NP-hard, and, in addition, optimal solution sets might have exponential cardinality.
Parametric approximation aims at providing polynomial-time algorithms that overcome these challenges.
Instead of computing an optimal solution set, the goal is to compute an approximation set that contains only an approximate solution for every fixed parameter vector.
Several new parametric approximation algorithms have been developed in recent literature.
However, all of these share a common set of assumptions, which limits the class of parametric optimization problems that can be approximated.
Namely, they do not allow negative parameter dependencies and have their parameter sets fixed to the positive orthant.
We present a new adaptive approximation (and, also, exact) algorithm that can be applied to a wider class of linear-multi-parametric optimization problems.
Our algorithm builds upon existing algorithms from both the fields of parametric and multi-objective optimization and generalizes these algorithms.
In addition, we provide structural results for the transformation of parameter sets, and demonstrate that, for linear-multi-parametric maximization problems, the assumption of non-negative optimal objective values over the whole parameter set is not sufficient to ensure approximability.%

\medskip

\noindent
\textbf{Funding:} 
This research was funded by the Deutsche Forschungsgemeinschaft (DFG, German Research Foundation) --- Project number 508981269 and GRK 2982, 516090167 ``Mathematics of Interdisciplinary Multiobjective Optimization''.

\end{abstract}

\clearpage

\section{Introduction}

When modelling real-life problems as optimization problems, a common challenge arises from the uncertainty of \emph{parameters} in the model.
Coefficients affecting the objective function or the feasible region may not be known precisely at the time of decision making, and they might vary over time.
Developing concepts and algorithms to approach such problems is the subject of \emph{parametric optimization}.
In some cases, one is interested in the set of parameters for which a given solution stays optimal or feasible.
In other cases, the goal is to determine a complete set of solutions such that, for each possible parameter vector, at least one solution in this set is optimal.

In this article, we look at the latter case: finding (optimal) solution sets for so-called \emph{linear-multi-parametric optimization problems}.
The class of these problems is widely studied and has many applications (see, e.g.,~\cite{nemesch.ruzika.ea2025SurveyOfExact}).
However, many linear-multi-parametric optimization problems are known to be computationally hard.
Even for a single, fixed parameter vector, determining an optimal solution can be NP-hard.
Additionally, the cardinality of an optimal solution set that covers all possible parameter vectors might be exponential in the input size~\cite{murty1980ComputationalComplexity,carstensen1983ComplexityParametric,ruhe1988ComplexityResults}.
For both these reasons, we focus on the \emph{approximation} of linear-multi-parametric optimization problems.
Approximating an instance of a linear-multi-parametric optimization problem is understood as finding a set of solution where, for each possible parameter vector, the problem is approximated by at least one solution from this set.
This notion of approximation overcomes both the problem of NP-hardness and of solution sets of exponential cardinality --- for many linear-multi-parametric optimization problems, approximation sets (of polynomial cardinality) can be computed in polynomial time~\cite{helfrich.herzel.ea2022ApproximationAlgorithmGeneral}.
We present an algorithm that can act both as an exact and as an approximation algorithm for linear-multi-parametric optimization problems.

Next, we formally define the concept of linear-multi-parametric optimization and establish some important notions which are essential for our approach.

\begin{definition}\label{def::paraproblem}
    Let $p\in\mathbb{N}_{>0}$.
    A \emph{linear-$p$-parametric optimization problem} (or \emph{linear-(multi-) parametric optimization problem}) is a set of instances.
    An instance~$\Pi$ of a linear-$p$-parametric optimization problem is a triple $\Pi=(\Lambda, X,f)$, where:
    \begin{itemize}
        \item $\Lambda=\{\lambda\in\mathbb{R}^p: Q\lambda\geq d\}$ for some $m\in\mathbb{N}_{>0}$, $Q\in\mathbb{Q}^{m\times p},d\in\mathbb{Q}^{m}$,
        \item $X$ is a set, and
        \item $f: X\to \mathbb{R}^{p+1}, x\mapsto {\left(f_1(x),\ldots, f_{p+1}(x)\right)}^\T$.
    \end{itemize}
    The goal is to solve the problem
    \[
    {\left\{\min_{x\in X} \ (\text{or } \max_{x\in X}) \quad F_\lambda(x) \coloneq \sum_{i=1}^p \lambda_i f_i(x) + f_{p+1}(x) \right\}}_{\lambda\in\Lambda}.
    \]
\end{definition}
The set~$\Lambda$ is called the \emph{parameter set}, the set~$X$ is called the \emph{set of feasible solutions}, the function~$f$ is called the \emph{objective function}, and we call the function~$F_\lambda \colon X \rightarrow \mathbb{R}$ the \emph{parametric value function}.
If $\lambda\in\Lambda$ is fixed to a specific value, we obtain an instance of a (non-parametric) optimization problem, denoted by $\Pi(\lambda)$.
As a shorthand, $F^*(\lambda)\coloneq \min_{x\in X}F_{\lambda}(x)$ (or $\max_{x\in X}F_{\lambda}(x)$) denotes the optimal objective value of $\Pi(\lambda)$.
Solving~$\Pi$ requires finding an \emph{optimal solution set}, i.e., a set~$S\subseteq X$ of solutions such that, for each~$\lambda\in\Lambda$, at least one~$x\in S$ is an optimal solution of $\Pi(\lambda)$.
In an instance of a linear-multi-parametric optimization problem, only the encoding of~$\Lambda$ is predetermined (by the matrix~$Q$ and the vector~$d$).
The set~$X$ and the function~$f$ can be encoded implicitly.
For example, an instance of the \emph{linear-parametric shortest path problem} consists of some $Q,d$ for the parameter set~$\Lambda$, a graph with edge-cost vectors, and two nodes~$s,t$.
The set~$X$ is the set of all (simple) $s$-$t$ paths, and~$f$ sums up the cost vectors of all edges in a path.

The term \emph{linear-parametric} indicates that the parameter dependency is linear and occurs only in the parametric value function.
We additionally assume that the parameter set is polyhedral.
This distinguishes linear-parametric optimization problems from more general parametric optimization problems, where parameter dependencies can be non-linear and also occur in the set of feasible solutions, and where parameter sets can be arbitrary.
The linear-parametric setting is quite common in literature~\cite{nemesch.ruzika.ea2025SurveyOfExact}.
For the rest of this paper, we simplify \emph{linear-(multi)-parametric} into \emph{parametric} for easier notation, and only write \emph{$p$-parametric} to emphasize the parameter dimension.

If~$F_\lambda$ is to be minimized, we call the problem a \emph{parametric minimization problem}, and if it is to be maximized, we call it a \emph{parametric maximization problem}.
For ease of description, we assume minimization throughout this paper and discuss the maximization case only where a generalization is not straight-forward.

\medskip

Even in the $1$-parametric case, it is well-known that a parametric optimization problems can be \emph{intractable}, i.e., there are families of instances where every optimal solution set has exponential cardinality in the encoding length of the underlying instance.
Examples include the parametric variants of the knapsack problem~\cite{carstensen1983ComplexityParametric}, the shortest path problem~\cite{carstensen1983ComplexityParametric}, parametric linear~\cite{murty1980ComputationalComplexity,ruhe1988ComplexityResults} and integer programming~\cite{carstensen1983ComplexityParametric}, and many more.

For $\varepsilon\geq0$ and a parametric minimization problem instance $\Pi$, a \emph{$(1+\varepsilon)$-approximation set} is a set $\optset\subseteq X$ where, for each $\lambda\in\Lambda$, there exists~$x\in\optset$ such that $F_\lambda(x)\leq (1+\varepsilon) \cdot F^*(\lambda)$.
We say that the solution~$x$ \emph{$(1+\varepsilon)$-approximates}~$\Pi(\lambda)$.
A \emph{$(1+\varepsilon)$-approximation algorithm} is an algorithm that, for a fixed $\varepsilon\geq0$, takes as input a parametric minimization problem instance~$\Pi$ and computes a $(1+\varepsilon)$-approximation set for~$\Pi$ in time $\bigO(\poly\langle \Pi\rangle)$, where $\langle \Pi\rangle$ denotes the binary encoding length of~$\Pi$, and $\poly()$ denotes an arbitrary polynomial function with bounded degree.
An algorithm that takes as input an instance~$\Pi$ and an~$\varepsilon>0$ is called a parametric \emph{polynomial-time approximation scheme}~(PTAS) if it computes a $(1+\varepsilon)$-approximation set for~$\Pi$ in time $\bigO(\poly\langle \Pi \rangle)$ for every fixed value of~$\varepsilon$. It is called a parametric \emph{fully polynomial-time approximation scheme}~(FPTAS) if it computes a $(1+\varepsilon)$-approximation set for $\Pi$ in time $\bigO(\poly(\langle \Pi,\varepsilon\rangle,\varepsilon^{-1}))$.

In contrast, for parametric maximization problems, we consider $\varepsilon$ such that $0\leq\varepsilon<1$.
A set $S\subseteq X$ is called an \emph{$(1-\varepsilon)$-approximation set} if, for every $\lambda\in\Lambda$, there is an $x\in S$ with $F_\lambda(x)\geq (1-\varepsilon) F^*(\lambda)$.
The concepts of \emph{$(1-\varepsilon)$-approximation algorithms} and (F)PTAS for maximization are defined analogously to the minimization case.

\medskip

Under rather mild assumptions, for every $\varepsilon>0$, there exists a $(1+\varepsilon)$-approximation set of polynomial cardinality for every parametric optimization problem.
We discuss these assumptions in Section~\ref{sec::prelims::assumptions}, and summarize them in Assumption~\ref{assumption::general}.

\medskip

To the best of our knowledge, \Citeauthor{katoh.ibaraki1987ParametricCharacterizationEapproximation}~\cite{katoh.ibaraki1987ParametricCharacterizationEapproximation} are the first to consider parametric approximation sets.
There, the parameter set~$\Lambda$ is partitioned into a grid.
For every vertex~$\lambda$ of this grid, the problem $\Pi(\lambda)$ needs to be solved (or at least approximated).
In recent years, interest in parametric approximation algorithms has continuously increased.
A general approximation algorithm for $1$-parametric optimization problems is developed by~\citeauthor{bazgan.herzel.ea2022ApproximationAlgorithmGeneral} in~\cite{bazgan.herzel.ea2022ApproximationAlgorithmGeneral}.
It generalizes the well-known Eisner-Severance method~\cite{eisner.severance1976MathematicalTechniques}.
Another grid-based approach independent of the one by~\citeauthor{katoh.ibaraki1987ParametricCharacterizationEapproximation} is developed by~\citeauthor{helfrich.herzel.ea2022ApproximationAlgorithmGeneral}~\cite{helfrich.herzel.ea2022ApproximationAlgorithmGeneral}.
Specialized approximation algorithms for parametric variants of the knapsack problem are developed in~\cite{giudici.halffmann2017ApproximationSchemes,holzhauser.krumke2017AnFPTAS,halman.holzhauser.ea2018FPTAKnapsackParaWeights}.

It is well-known that parametric optimization and multi-objective optimization are closely related~\cite{isermann1977EnumerationSetAll}, we outline this relation in more detail in Section~\ref{sec::prelims}.
A consequence of this relation is that approximation algorithms from multi-objective optimization can be used for parametric optimization, see~\cite{nemesch.ruzika.ea2025SurveyOfExact} for an overview.
In particular, the notion of so-called \emph{$(1+\varepsilon)$-convex approximation sets} from multi-objective optimization directly corresponds to the notion of parametric $(1+\varepsilon)$-approximation sets~\cite{helfrich.herzel.ea2022ApproximationAlgorithmGeneral}.
Algorithms for computing convex approximation sets are developed in~\cite{papadimitriou.yannakakis2000ApproximabilityTradeoffsOptimal,diakonikolas.yannakakis2008SuccinctApproximateConvex,diakonikolas2011ApproximationMultiobjective,daskalakis.diakonikola2016HowGood,helfrich.ruzika.ea2025EfficientlyConstructing,nemesch.ruzika.ea2025PolynomialTimeInnerApproximation}, an overview is provided in~\cite{herzel.ruzika.ea2021ApproximationMethodsMultiobjective}.

\Citeauthor{helfrich.ruzika.ea2025EfficientlyConstructing}~\cite{helfrich.ruzika.ea2025EfficientlyConstructing} conduct a computational study to compare the performance of algorithms from~\cite{papadimitriou.yannakakis2000ApproximabilityTradeoffsOptimal,helfrich.herzel.ea2022ApproximationAlgorithmGeneral,helfrich.ruzika.ea2025EfficientlyConstructing} on instances of the knapsack problem and the metric travelling salesman problem.
It is demonstrated that adaptive approaches that do not rely on a fixed grid significantly outperform grid-based approaches such as those described in~\cite{papadimitriou.yannakakis2000ApproximabilityTradeoffsOptimal,helfrich.ruzika.ea2025EfficientlyConstructing}.

\medskip

This observation motivates the development of more general algorithms that work in an adaptive fashion.
In this paper, we adapt an exact approach to the approximation setting.
Numerous variants of this exact approach have been developed over the years, both in parametric optimization and in multi-objective optimization, where they are best known as \emph{Benson-type algorithms}~\cite{hamel.lohne.ea2014BensonTypeAlgorithms}.
The basic idea is to characterize the entire set~$X$ of solutions via a convex polyhedron, denoted by~$\lowi$.
Every feasible solution~$x\in X$ maps to a valid halfspace for~$\lowi$ (i.e.\ a halfspace that contains $\lowi$),
and an optimal solution set can be identified by determining all solutions that map to facets of~$\lowi$.
We provide more details on this characterization in Section~\ref{sec::prelims::lowerImage}.
Algorithms that make use of this characterization work by maintaining a polyhedron $\appri\supseteq \lowi$, a so-called \emph{outer approximation polyhedron}~\cite{csirmaz2021InnerApproximationAlgorithm}.
The algorithms work by iteratively refining $\appri$ through repeated intersections with halfspaces that correspond to solutions.

We give a short overview of important (exact) algorithms that work in this fashion.
Due to the aforementioned relation between parametric optimization and multi-objective optimization, we mention literature from both fields.
For a more comprehensive overview, we refer to~\cite{nemesch.ruzika.ea2025SurveyOfExact}.
The first such algorithm is presented by \citeauthor{fernandez-baca.srinivasan1991ConstructingMinimizationDiagrama}~\cite{fernandez-baca.srinivasan1991ConstructingMinimizationDiagrama} for $2$-parametric optimization problems.
A general parametric algorithm is later sketched by \citeauthor{fernandez-baca.seppalainen.ea2004ParametricMultipleSequence} in~\cite{fernandez-baca.seppalainen.ea2004ParametricMultipleSequence}.
At this point in time, it was already considered ``part of the folklore''~\cite{fernandez-baca.seppalainen.ea2004ParametricMultipleSequence}, but, to the best of our knowledge, no formal description or analysis of this algorithm appears anywhere in the literature.

In multi-objective optimization, Benson-type algorithms~\cite{hamel.lohne.ea2014BensonTypeAlgorithms} (sometimes also called ``outer approximation'' or ``inner approximation'' algorithms~\cite{csirmaz2021InnerApproximationAlgorithm}) follow the basic strategy outlined above.
First developed for multi-objective linear programming~\cite{ehrgott.lohne.ea2012DualVariantBenson} and for so-called \emph{vector optimization problems}~\cite{hamel.lohne.ea2014BensonTypeAlgorithms,lohne.rudloff.ea2014PrimalDualApproximation}, they have been generalized to multi-objective integer programming~\cite{borndorfer.schenker.ea2016PolySCIP}, multi-objective combinatorial optimization~\cite{bokler.mutzel2015OutputSensitiveAlgorithmsEnumerating}, and multi-objective mixed-integer programming~\cite{bokler.nemesch.ea2023PaMILOSolverMultiobjective}.
Implementations and numerical experiments in~\cite{lohne.weissing2017VectorLinearProgram,borndorfer.schenker.ea2016PolySCIP,bokler.nemesch.ea2023PaMILOSolverMultiobjective,csirmaz2021InnerApproximationAlgorithm} demonstrate the practical viability of these algorithms.
Sometimes, an additive notion of approximation is used to speed up the practical running time of Benson-type algorithms (for example, in~\cite{hamel.lohne.ea2014BensonTypeAlgorithms}) or as part of an adaption for convex optimization problems~\cite{ehrgott.shao.ea2011ApproximationAlgorithmConvex,liu.ehrgott2018PrimalDualAlgorithms}.
However, in contrast to the multiplicative notion we consider in this paper, it is impossible to provide a polynomial running time bound when using the additive notion.

\section{Our Contribution}

Our main contribution is the adaption of the general strategy of Benson-type algorithms to parametric optimization and approximation.
We present an algorithm that can be used for both exact solving, i.e., finding an optimal solution set, or as an approximation algorithm.
In the exact setting, it runs in output-polynomial time if the underlying non-parametric problem can be solved in polynomial time.
In the approximation setting, the algorithm runs in polynomial time and can achieve an approximation factor arbitrarily close to the best known approximation factor of this underlying problem.

Our algorithm, which is described in Section~\ref{sec::algo}, requires that the parameter set~$\Lambda$ is bounded.
Since we allow arbitrary polyhedra as parameter sets, we describe in Section~\ref{sec::homogenization} how every parametric optimization problem can be transformed (in polynomial time) into an equivalent parametric optimization problem where the parameter set is bounded.
In general, our algorithm can be used to approximate a much bigger class of parametric optimization problems than all previously developed parametric approximation algorithms.
In particular, our algorithm is the first parametric approximation algorithm that can deal with negative parameter dependencies in the parametric value function~$F_\lambda$ and with general polyhedra as parameter sets.
We briefly discuss the different problem assumptions between our algorithm and previously developed algorithms in Section~\ref{sec::prelims::assumptions}.

Furthermore, in Section~\ref{sec::maxNotApx}, we consider a particular class of parametric maximization problems that has not been considered in the literature before.
For this class, our assumptions are slightly relaxed and we show that, under these relaxed assumptions, some parametric maximization problems can never be $(1-\varepsilon)$-approximated in polynomial time, for any constant value of~$\varepsilon$.
This narrows down which assumptions are sufficient to ensure that a parametric optimization problem can be approximated by a constant factor, Table~\ref{sota::table::results} in Section~\ref{sec::prelims::assumptions} provides an overview.

\section{Preliminaries}\label{sec::prelims}

For a mathematical object~$x$, for example a problem instance or a matrix, $\langle x\rangle$ denotes the binary encoding length of~$x$.
We use the well-known operators~$\Omega$ and~$\bigO$ from Landau-notation to denote asymptotic lower and upper bounds, respectively.
By $f(x)\in \bigO(\poly(x))$ we denote that there exists a polynomial function of~$x$ that bounds the function~$f$ asymptotically from above, but where we are not interested in its degree.
For two sets $S_1,S_2\subseteq \mathbb{R}^n$, we denote their Minkowski-sum by $S_1 + S_2 \coloneq \{s_1+s_2 : s_1\in S_1,s_2\in S_2\}$.
By~$\mathbb{R}_{\geq0}$ we denote all non-negative real numbers, and by~$\mathbb{R}_{>0}$ and~$\mathbb{N}_{>0}$ we denote all strictly positive real numbers and integers, respectively.
Analogously denoted are the non-positive and strictly negative real numbers by~$\mathbb{R}_{\leq0}$ and~$\mathbb{R}_{<0}$.

\medskip

We outline some basic concepts for (rational) polyhedra that are relevant in this paper.
For a more comprehensive introduction, we refer to~\cite{grotschel.lovasz.ea1988RationalPolyhedra,ziegler1995LecturesPolytopes,kaibel2011BasicPolyhedralTheory}.

For $n\in\mathbb{N}_{>0}$, $a\in\mathbb{Q}^{n}\setminus\{0\}$, and $b\in\mathbb{Q}$, the set $\{x\in\mathbb{R}^{n}:a^\T x = b\}$ is called a \emph{hyperplane}, and the set $\{x\in\mathbb{R}^{n}:a^\T x \geq b\}$ is called a \emph{halfspace}.
A \emph{(convex) polyhedron} is an intersection of finitely many halfspaces.
The dimension of a polyhedron is the dimension of its affine hull as a subspace.
A bounded polyhedron is called a \emph{polytope}.

A \emph{face} of an $n$-dimensional polyhedron $\mathcal{P}$ is the non-empty intersection of~$\mathcal{P}$ and a hyperplane $\{x\in\mathbb{R}^{n}:a^\T x = b\}$ for which $\mathcal{P}\subseteq\{x\in\mathbb{R}^{n}:a^\T x \geq b\}$.
Every face is also a convex polyhedron.
A face of dimension zero is called a \emph{vertex} of $\mathcal{P}$, and a face of dimension $n-1$ is called a \emph{facet} of $\mathcal{P}$.

A polyhedron $\mathcal{P}$ that is represented by an intersection of halfspaces is said to be in \emph{\Hpoly}.
A polyhedron $\mathcal{P}$ can also be in \emph{\Vpoly}:
It is then represented by two finite sets $S_1,S_2\subseteq\mathbb{Q}^n$ with $\mathcal{P}=\conv (S_1)+\cone (S_2)$, where the operators $\conv$ and $\cone$ denote the convex and conic hull, respectively.
The elements of~$S_1$ are the vertices of $\mathcal{P}$, the elements of $S_2$ are the so-called \emph{(extreme) rays} of~$\mathcal{P}$, and together these elements are the \emph{generators} of~$\mathcal{P}$.

Transferring an \Hpoly\ into a \Vpoly\ is called a \emph{vertex enumeration}, and a transfer in the opposite direction is called a \emph{facet enumeration}.
For fixed~$n$, both transfer operations can be done in polynomial time~\cite{chazelle1993OptimalConvexHull}, and the encoding length of the resulting representation is bounded by a polynomial in the encoding length of the original representation~\cite{grotschel.lovasz.ea1988RationalPolyhedra}.
For an \Hpoly, the operation of removing halfspaces that are not needed for a complete description of the polyhedron is called \emph{redundancy removal}.

\medskip

We conclude this section with a short introduction to multi-objective optimization and its relevance for parametric optimization.
An in-depth introduction to multi-objective optimization can be found in~\cite{ehrgott2005multicriteriaOptimization}.
A \emph{multi-objective optimization problem} is a problem of the form $\min_{x\in X}f(x)$ (or $\max_{x\in X} f(x)$ in the maximization case), where~$X\subseteq\mathbb{R}^n$ is a set of feasible solutions and $f: X \to \mathbb{R}^k$ for some $k\in\mathbb{N}$ with $k\geq2$ is a (vector-valued) objective function.

A solution~$x\in X$ is called \emph{efficient} and is image~$f(x)$ is called \emph{non-dominated} if there is no other solution $x'\in X$ such that $f(x')\neq f(x)$ and $f(x')\leq f(x)$ (or $f(x')\geq f(x)$ in the maximization case).
The goal when solving a multi-objective optimization problem is to find the set of non-dominated images~$Y_{N}$ and, for each image~$y$ in~$Y_N$, a solution~$x\in X$ with $f(x)=y$.
A special subset of the non-dominated images are the \emph{non-dominated extreme points}, i.e., the vertices of the \emph{Edgeworth-Pareto hull} $\conv{(f(X))}+\mathbb{R}^k_{\geq0}$ (or $\conv{(f(X))}+\mathbb{R}^k_{\leq0}$ in the maximization case).

It is well-know that finding the non-dominated extreme points is equivalent to solving a corresponding $(k-1)$-parametric optimization problem $(\mathbb{R}^{k-1}_{\geq0}, X, f)$~\cite{isermann1977EnumerationSetAll,przybylski.gandibleux.ea2009RecursiveAlgorithmFinding}.
Often, this problem is alternatively defined by the parameter set
\[\left\{\lambda\in\mathbb{R}^{k-1}_{\geq0}: \sum_{i=1}^{k+1}\lambda_i=1\right\},
\]
solution set $X$, and objective function
\[
    x\in X \mapsto {\left(f_1(x)-f_k(x),\ldots, f_{k-1}(x)-f_k(x),f_k(x)\right)}^\T.
\]
\subsection{Assumptions}\label{sec::prelims::assumptions}

We make several assumptions to ensure that an instance~$\Pi$ of a parametric optimization problem can be approximated (or solved exactly).
Many of these or similar assumptions are commonly made in parametric approximation~\cite{helfrich.herzel.ea2022ApproximationAlgorithmGeneral}.

First, we assume that~$f$ is computable in polynomial time and that $\Pi(\lambda)$ is bounded for every $\lambda\in\Lambda$.
Furthermore, for each parametric optimization problem, we assume that there exists a polynomial~$\poly$ of bounded degree such that, for every instance~$\Pi$ and every $\lambda\in\Lambda$ of this instance, there is at least one optimal solution~$x\in X$ of~$\Pi(\lambda)$ with encoding length $\langle x \rangle \in \bigO(\poly\langle \Pi\rangle)$.
The assumption of the existence of an optimal solution with polynomial encoding length is satisfied for many problems, e.g., for every parametric combinatorial optimization problem or parametric mixed-integer linear program.
For a given instance $\Pi$, we denote by~$\orsols\subseteq X$ the set of all solutions (not necessarily optimal) with encoding length polynomial in $\langle \Pi \rangle$.

We also assume that the parameter set $\Lambda$ is $p$-dimensional, and that there are no redundant inequalities in the system $Q\lambda\geq d$.
Although this assumption is not strictly necessary, it notably streamlines several parts of this paper; and it can always be ensured by redundancy removal techniques.

Another important assumption is that we consider the number~$p$ of parameters to be a constant.
If~$p$ is a variable part of the input, there are problems where the cardinality of every approximation set grows exponentially in the size of a problem instance.
Although this fact seems to be folklore, to the best of our knowledge, no example has been provided in the literature.
Therefore, we provide Example~\ref{example::pMustBeConstant} in the appendix.

If we are interested in obtaining an approximation set, it needs to be ensured that, for every $\lambda\in\Lambda$, the non-parametric problem $\Pi(\lambda)$ can be approximated.
Therefore, we assume that $F_\lambda(x)\geq 0$ for every $x\in X$ and $\lambda\in\Lambda$.
Note that for maximization problems, the weaker assumption $F^*(\lambda)\geq0$ may also be enough to ensure that~$\Pi(\lambda)$ can be approximated.
However, we show in Section~\ref{sec::maxNotApx} that this weaker assumption is, in general, not sufficient to ensure the existence of $(1-\varepsilon)$-approximation sets of polynomial cardinality.

The last assumption is that an oracle for~$\Pi$ is available.
An \emph{$\alpha$-approximation oracle~$\oracle_{\alpha}$} for an instance~$\Pi$ of a parametric minimization problem~$\Pi$ and  $\alpha\geq1$ is a black-box algorithm that takes as input a $\lambda\in\Lambda$ and returns a solution $x\in\orsols$ such that $F_\lambda(x)\leq\alpha \cdot F^*(\lambda)$.
We call it \emph{exact} if $\alpha=1$.
In the maximization case, $0<\alpha\leq1$ and, on input~$\lambda\in\Lambda$, an oracle~$\oracle_{\alpha}$ must return a solution~$x\in\orsols$ such that $F_\lambda(x)\geq\alpha \cdot F^*(\lambda)$.

\noindent
Assumption~\ref{assumption::general} collects all our assumptions for later reference:
\begin{assumption}\label{assumption::general}
    For every instance~$\Pi$ of a parametric optimization problem, we assume the following:
    \begin{enumerate}[label=(\alph*)]
        \item The function $f$ is polynomial-time computable and, for every $\lambda\in\Lambda$, there exists an optimal solution~$x\in\orsols$ for~$\Pi(\lambda)$.
        \item The number~$p$ of parameters is a constant. Furthermore, $\Lambda$ is $p$-dimensional and the system~$Qx\geq d$ contains no redundant inequalities.
        \item If we are interested in a $(1+\varepsilon)$-approximation set for $\varepsilon>0$, $F_\lambda(x)\geq 0$ holds for all~$\lambda\in\Lambda$ and all~$x\in X$.
        \item An exact or $\alpha$-approximation oracle~$\oracle_\alpha$ is available.
    \end{enumerate}
\end{assumption}

\medskip

All existing work in the parametric approximation literature makes stricter assumptions than we do in Assumption~\ref{assumption::general}.
Most notable, they share the assumption that, for every $x\in X$, not only $F_\lambda(x)\geq0$ must hold, but also the stronger assumption that $f_i(x)\geq0$ must hold for each $i\in\{1,\ldots,p+1\}$  (see~\cite{diakonikolas2011ApproximationMultiobjective,daskalakis.diakonikola2016HowGood,bazgan.herzel.ea2022ApproximationAlgorithmGeneral,helfrich.herzel.ea2022ApproximationAlgorithmGeneral,helfrich.ruzika.ea2025EfficientlyConstructing}).
Furthermore, they have the rather strict requirements for the parameter set that $\Lambda=\mathbb{R}^p_{\geq0}$ must hold (see~\cite{diakonikolas2011ApproximationMultiobjective,daskalakis.diakonikola2016HowGood,bazgan.herzel.ea2022ApproximationAlgorithmGeneral,helfrich.herzel.ea2022ApproximationAlgorithmGeneral,helfrich.ruzika.ea2025EfficientlyConstructing}).\footnote{%
In some papers, these assumptions are formulated slightly different, but are always equivalent to our formulation.}

In contrast, Assumption~\ref{assumption::general} allows to consider a far larger class of problems.
With Algorithm~\ref{algo::apx} from Section~\ref{sec::algo} and an oracle~$\oracle_\alpha$, we can always find a $(1+\varepsilon)\alpha$-approximation set of polynomial cardinality for this larger class of problems.
Note that an exact, but not necessarily polynomial-time oracle~$\oracle_1$ can always be constructed using a brute-force approach.
Therefore, for every $\varepsilon>0$, our Algorithm implies the existence of an $(1+\varepsilon)$-approximation set of polynomial cardinality for every problem satisfying Assumption~\ref{assumption::general}, even if no polynomial-time oracle exists.
Table~\ref{sota::table::results} summarizes the old and new problem classes for which this result holds.

\begin{table}
\centering
\begin{threeparttable}
\caption{%
    Currently known results on whether an assumption ensures that an approximation set of polynomial cardinality exists.
    For each entry, we only cite the most general result found in literature.
    Bold entries are new results that are shown in this paper.
    The last two columns are only relevant for maximization problems.%
\label{sota::table::results}}
\begin{tabular}{l c l l l l l l}\toprule
 \multirow{2}{*}{Assumption} & $F$: & \multicolumn{2}{c}{$f_{1,\ldots,p+1}(x)\geq0$} & \multicolumn{2}{c}{$F_{\lambda}(x)\geq0$} & \multicolumn{2}{c}{$F^*(\lambda)\geq0$} \\
\cmidrule(lr){3-4}\cmidrule(lr){5-6}\cmidrule(lr){7-8}
& $\Lambda$: & $\mathbb{R}^p_{\geq0}$ & polyh. & $\mathbb{R}^p_{\geq0}$ & polyh. & $\mathbb{R}^p_{\geq0}$ & polyh.\\
\midrule
\multirow{2}{*}{minimization} & $p=1$ & yes~\cite{bazgan.herzel.ea2022ApproximationAlgorithmGeneral} & $\text{yes}^\dagger$\,\cite{bazgan.herzel.ea2022ApproximationAlgorithmGeneral} & \textbf{yes} & \textbf{yes} & - & - \\
& $p>1$ & yes~\cite{helfrich.herzel.ea2022ApproximationAlgorithmGeneral} & \textbf{yes} & \textbf{yes} & \textbf{yes} & - & - \\[5pt]
\multirow{2}{*}{maximization} & $p=1$ & yes~\cite{bazgan.herzel.ea2022ApproximationAlgorithmGeneral} & $\text{yes}^\dagger$\,\cite{bazgan.herzel.ea2022ApproximationAlgorithmGeneral} & \textbf{yes} & \textbf{yes} & open & open \\
& $p>1$ & yes~\cite{helfrich.herzel.ea2022ApproximationAlgorithmGeneral} & \textbf{yes} & \textbf{yes} & \textbf{yes} & \textbf{no} & \textbf{no}\\
\bottomrule
\end{tabular}
\begin{tablenotes}\footnotesize
\item[$\dagger$] The generalization to arbitrary parameter intervals is not developed in~\cite{bazgan.herzel.ea2022ApproximationAlgorithmGeneral}, but we consider it to be straight-forward.
\end{tablenotes}
\end{threeparttable}
\end{table}

\subsection{Polyhedral Characterization}\label{sec::prelims::lowerImage}

We now characterize optimal solution sets and approximation sets through polyhedra.
In the exact case, this is not a new concept and has been observed in parametric optimization~\cite{fernandez-baca.srinivasan1991ConstructingMinimizationDiagrama} as well as in multi-objective optimization~\cite{heyde.lohne2008GeometricDualityMultiple}.
We generalize this to the approximation case.

For a solution $x\in X$, we define the hyperplane
\[
    h(x)\coloneqq\left\{(\lambda,z)\in\mathbb{R}^{p+1}:\sum_{i=1}^p \lambda_i f_i(x) - z = - f_{p+1}(x)\right\}
\]
and the halfspace
\[
    H(x)=\left\{(\lambda,z)\in\mathbb{R}^{p+1}:\sum_{i=1}^p \lambda_i f_i(x) - z \geq - f_{p+1}(x)\right\}.
\]

For every~$\lambda\in\Lambda$ and $(\lambda,z)\in h(x)$, the definition of~$h(x)$ implies that $z=F_\lambda(x)$ holds.
Therefore, $z$ can be seen as an upper bound for~$F^*(\lambda)$, and the point $(\lambda,F^*(\lambda))$ must be in~$H(x)$.
Note that, in the maximization case, the inequality in the definition of the halfspace~$H(x)$ is reversed.

\begin{definition}
    For a set~$S\subseteq X$, we define the set
    \[
        \appri(S)\coloneqq \bigcap_{x\in S}H(x) \cap \left\{(\lambda,z)\in \Lambda \times \mathbb{R}\right\}.
    \]
    The \emph{lower image} of a parametric problem with feasible set~$X$ is the set $\lowi \coloneqq \appri(X)$.
\end{definition}
The term \emph{lower image} originally stems from multi-objective linear programming~\cite{lohne.rudloff.ea2014PrimalDualApproximation}, but we adopt it for parametric optimization.
For a given instance~$\Pi$, the set~$\orsols$ always is an optimal solution set (see Assumption~\ref{assumption::general}) and, therefore, $\lowi=\appri(\orsols)$ holds.
In addition,~$\orsols$ is finite by definition because the encoding lengths of its solutions is finite.
Thus, $\lowi$ can be described by a finite set of halfspaces and is a polyhedron.

\medskip

Let $S\subseteq X$ be an arbitrary subset of~$X$.
Clearly, $\lowi\subseteq \appri(S)$.
A face $\varphi$ of $\appri(S)$ is called \emph{maximal} if there are no two points $(\lambda,z),(\lambda,z')\in\varphi$ such that $z<z'$.
For each point~$(\lambda,z)$ in a maximal face of~$\appri(S)$, there is at least one solution~$x\in S$ such that $z=\min_{x\in S}F_\lambda(x)$.
Therefore, if $\appri(S)=\lowi$, then $S$ must be an optimal solution set.
Moreover, if the set $h(x)\cap \lowi$ is a unique facet of $\lowi$ for each~$x\in X$, then~$\appri(S)$ describes~$\lowi$ without redundancies.
In this case, $S$ is an optimal solution set of minimum cardinality.

\medskip

In the case where we are interested in a $(1+\varepsilon)$-approximation set, we know that $F^*(\lambda)\geq0$ holds for every $\lambda\in\Lambda$ by Assumption~\ref{assumption::general}.
Let
\[
    \lowieps\coloneqq\{(\lambda,(1+\varepsilon)z):(\lambda,z)\in\lowi\}.
\]
Then, $\lowi\subseteq\lowieps$ holds by construction.
If (and only if) $\appri(S)\subseteq\lowieps$, the set~$S$ is a $(1+\varepsilon)$-approximation set:
for every~$\lambda\in\Lambda$, the point~$(\lambda,z)$ in a maximal facet of~$\appri(S)$ is also in $\lowieps$ and, therefore, $z\leq(1+\varepsilon)F^*(\lambda)$.
This observation is used in the design of our algorithm in Section~\ref{sec::algo}.

\section{The General Parametric Approximation Algorithm}\label{sec::algo}

In this section, we describe our algorithm under Assumption~\ref{assumption::general} and the additional assumption that $\Lambda$ is a polytope.
As we show in Section~\ref{sec::homogenization}, every instance of a parametric optimization problem can be transformed (in polynomial time) into another instance of the same parametric optimization problem with identical optimal solution sets and approximation sets, and a polytope as parameter set.
Therefore, if this transformation is applied first, our algorithm can be used for every instance of a parametric optimization problem, as long as it satisfies Assumption~\ref{assumption::general}.
We first give an informal description of the algorithm before formally stating and analyzing it.

The goal of the algorithm is to compute either an approximation set or an optimal solution set.
For this, it only needs access to an oracle for $\Pi(\lambda)$.
In the case that an approximation set is the goal, an approximation oracle $\oracle_\alpha$ with $\alpha\geq1$ is sufficient.
For every input $\varepsilon>0$ and problem instance~$\Pi$, the algorithm computes a $(1+\varepsilon)\alpha$-approximation set while calling $\oracle_\alpha$ only a polynomial number of times.
If an optimal solution set is the goal, it can be seen as the special case where $\varepsilon=0$ and the oracle is exact, so $\alpha=1$.
The number of calls to $\oracle_1$ as well as the remaining running time in this case is output-polynomial, i.e., bounded by a polynomial in the size of the input and the cardinality of the output set (see~\cite{johnson.yannakakis.ea1988GeneratingAllMaximal}).%
\footnote{We ignore the remaining case where $\varepsilon=0$ and $\alpha>1$, as this results in an $\alpha$-approximation set without ensuring polynomial running time.}

Our algorithm works iteratively.
It maintains a set $S\subseteq X$ of solutions, beginning with one arbitrary solution $x\in \orsols$.
At the beginning of each iteration, the algorithm checks if the current set $S$ is a $(1+\varepsilon)\alpha$-approximation set.
Recall the polyhedra~$\appri(S)$ and~$\lowieps$ from Section~\ref{sec::prelims::lowerImage}.
If $\appri(S)\subseteq\lowiepsarg{(1+\varepsilon)\alpha}$, then $S$ is a $(1+\varepsilon)\alpha$-approximation set.
Therefore, our algorithm needs to determine whether $\appri(S)\subseteq\lowiepsarg{(1+\varepsilon)a}$ holds.

For each vertex $(\lambda,z)$ of $\appri(S)$, the algorithm calls the oracle~$\oracle_\alpha(\lambda)$, which returns a solution~$x$.
If $(1+\varepsilon)\cdot F_\lambda(x)< z$, then $x$ is not yet $(1+\varepsilon)$-approximated in~$S$.
Since~$x$ itself is only an $\alpha$-approximation for~$\Pi(\lambda)$, it is not ensured that the optimal solution value of~$\Pi(\lambda)$ is $(1+\varepsilon)\alpha$-approximated in~$S$.
The algorithm then adds~$x$ to~$S$ to ensure that~$\Pi(\lambda)$ is $(1+\varepsilon)\alpha$-approximated and begins with the next iteration.
For the polyhedron~$\appri(S)$, adding~$x$ to~$S$ means that~$\appri(S)$ is intersected with the halfspace~$H(x)$, which improves the upper bound on~$F^*$ not only for~$\lambda$ itself, but also in a region around it (potentially up to the entire parameter set~$\Lambda$).

If, however, $(1+\varepsilon)\cdot F_\lambda(x)\geq z$ holds for every solution~$x$ returned by $\oracle_\alpha$ in an iteration, convexity ensures that $\appri(S)\subseteq\lowiepsarg{(1+\varepsilon)\alpha}$.
This verifies that~$S$ is a $(1+\varepsilon)\alpha$-approximation set and the algorithm terminates.
To avoid redundant calls to the oracle, the algorithm keeps track of all values of~$\lambda$ for which~$\oracle_\alpha(\lambda)$ has been called in previous iterations and skips those.

After verifying that~$S$ is a $(1+\varepsilon)\alpha$-approximation set, $S$ might contain some solutions for which the hyperplane~$h(x)$ does not contain a facet of~$\appri(S)$.
These can be removed in an optional cleaning step.
In the exact case (i.e., $\varepsilon=0$ and $\alpha=1$), this ensures that~$S$ has minimum cardinality.

Algorithm~\ref{algo::apx} formally describes our algorithm, and Figure~\ref{figure::algo::iteration} illustrates an example for an iteration.
Note that the oracle~$\oracle_\alpha$ is considered to be a fixed part of the algorithm and not part of the input.
For the rest of this section, we prove several important properties of Algorithm~\ref{algo::apx}:
First, we show in Theorem~\ref{theorem::algo::apxGuarantee} that, after termination of Algorithm~\ref{algo::apx}, the set~$S$ is indeed a $(1+\varepsilon)\alpha$-approximation set.
Then, we analyze the running time properties for the exact case in Section~\ref{sec::algo::exact}, and, afterwards,  for the approximation case in Section~\ref{sec::algo::apx}.

\begin{figure}
    \begin{minipage}[b]{0.475\textwidth}
        \centering
        \begin{tikzpicture}[%
    scale=0.6,
    lowiedge/.style={color=black!40, very thick},
    lowiextended/.style={lowiedge, dashed},
    lowiray/.style={lowiedge, {-Latex}},
    lowifill/.style={draw=none, fill=black!7},
    tick/.style={thin},
]
    \coordinate (v0) at (0.5,0);
    \coordinate (v1) at (0.5,1.5);
    \coordinate (v2) at (4,6);
    \coordinate (v3) at (9.5,2);
    \coordinate (v4) at (9.5,0);

    \coordinate (opt) at (4,3.3);
    \coordinate (eps) at ($(opt)!0.6!(v2)$);

    \draw[lowifill] (v0) to (v1) to (v2) to (v3) to (v4) to cycle;
    \draw[lowiray] (v1) to (v0);
    \draw[lowiray] (v3) to (v4);

    \draw[lowiedge] (v1) to node[pos=0.5,sloped,xshift=-10pt,yshift=10pt] {\small$H(x^0)$} (v2);
    \draw[lowiedge] (v2) to node[pos=0.5,sloped,xshift=10pt,yshift=10pt] {\small$H(x^1)$}  (v3);

    \draw[->,shorten >=3pt, dashed,black!60] (opt) to %
    node[pos=0.5,right,xshift=-2pt] {\footnotesize%
    $\cdot(1+\varepsilon)$}(eps);
    \draw[draw=black!70, fill=black!20] (opt) circle (4pt);
    \draw[draw=black!70, fill=black!20,dotted,thick] (eps) circle (5pt);
    \draw[draw=none, fill=black!40, color=black!40] (v2) circle (4pt);
    \node[yshift=8pt] at (v2) {\small$(\lambda,z)$};
    \node[yshift=-10pt] at (opt) {\small$(\lambda,F_\lambda(x^2))$};

    \draw[very thick,->] (-0.5,0) to node[pos=0.6, below, yshift=-8pt] {\large$\lambda$} (10.0, 0);
    \draw[very thick,->] (0,-0.5)to node[pos=0.6, left, xshift=-8pt] {\large$z$} (0, 7.0);
    \draw[tick] (0.5,0) to (0.5,-0.25);
    \node[yshift=-10pt,xshift=3pt] at (0.5,0) {$\lambda_{\min}$};
    \draw[tick] (9.5,0) to (9.5,-0.25);
    \node[yshift=-10pt] at (9.5,0) {$\lambda_{\max}$};

    \node[color=black!60] at (5,1) {$\appri{\left(\left\{x^0,x^1\right\}\right)}$};


\end{tikzpicture}
    \end{minipage}
    \hfill
    \begin{minipage}[b]{0.475\textwidth}
        \centering
        \begin{tikzpicture}[%
    scale=0.6,
    lowiedge/.style={color=black!40, very thick},
    lowiextended/.style={lowiedge, dashed},
    lowiray/.style={lowiedge, {-Latex}},
    lowifill/.style={draw=none, fill=black!7},
    tick/.style={thin},
]
    \coordinate (v0) at (0.5,0);
    \coordinate (v1) at (0.5,1.5);
    \coordinate (v2) at (4,6);
    \coordinate (v3) at (9.5,2);
    \coordinate (v4) at (9.5,0);

    \coordinate (opt) at (4,3.3);
    \coordinate (eps) at ($(opt)!0.6!(v2)$);

    \coordinate (leftintersec) at ($(v1)!0.32!(v2)$);
    \coordinate (rightintersec) at ($(leftintersec)!2.29!(opt)$);

    \draw[lowifill] (v0) to (v1) to (leftintersec) to (rightintersec) to (v3) to (v4) to cycle;
    \draw[lowiray] (v1) to (v0);
    \draw[lowiray] (v3) to (v4);

    \draw[lowiedge,draw=none] (v1) to node[pos=0.4,sloped,xshift=-10pt,yshift=10pt] {\small$H(x^0)$} (v2);
    \draw[lowiedge,draw=none] (v2) to node[pos=0.6,sloped,xshift=10pt,yshift=10pt] {\small$H(x^1)$}  (v3);
    \draw[lowiedge,draw=none] (leftintersec) to node[pos=0.63,sloped,yshift=8pt] {\small$H(x^2)$}  (rightintersec);
    \draw[lowiedge] (v1) to (leftintersec) to (rightintersec) to (v3);
    \draw[lowiextended] (v2) to (leftintersec);
    \draw[lowiextended] (v2) to (rightintersec);

    \draw[draw=black!70, fill=black!20] (opt) circle (4pt);
    \draw[draw=none, fill=black!40, color=black!40] (v2) circle (4pt);
    \node[yshift=8pt] at (v2) {\small$(\lambda,z)$};
    \node[yshift=-10pt] at (opt) {\small$(\lambda,F_\lambda(x^2))$};

    \draw[very thick,->] (-0.5,0) to node[pos=0.6, below, yshift=-8pt] {\large$\lambda$} (10.0, 0);
    \draw[very thick,->] (0,-0.5)to node[pos=0.6, left, xshift=-8pt] {\large$z$} (0, 7.0);
    \draw[tick] (0.5,0) to (0.5,-0.25);
    \node[yshift=-10pt,xshift=3pt] at (0.5,0) {$\lambda_{\min}$};
    \draw[tick] (9.5,0) to (9.5,-0.25);
    \node[yshift=-10pt] at (9.5,0) {$\lambda_{\max}$};


    \node[color=black!60] at (5,1) {$\appri{\left(\left\{x^0,x^1,x^2\right\}\right)}$};
\end{tikzpicture}
    \end{minipage}
    \caption{%
        A single refining step in Algorithm~\ref{algo::apx} (for a $1$-parametric problem with $\Lambda=\left[{\lambda_{\min},\lambda_{\max}}\right]$):
        The two solutions~$x^0$ and~$x^1$ are already known and the intersection of their halfspaces~$H(x^0)$ and~$H(x^1)$ bounds the polyhedron~$\appri{\left(\left\{x^0,x^1\right\}\right)}$.
        Algorithm~\ref{algo::apx} picks the vertex~$(\lambda, z)$ of~$\appri{\left(\left\{x^0,x^1\right\}\right)}$ and, through $\oracle_{\alpha}$, computes an $\alpha$-approximate solution~$x^2\in\orsols$ for~$\Pi(\lambda)$.
        If $(1+\varepsilon)\cdot F_\lambda(x^2)< z$, the halfspace $H(x^2)$ is used to refine $\appri(S)$ and cuts off the vertex~$(\lambda, z)$.%
        \label{figure::algo::iteration}
    }
\end{figure}

\begin{algorithm}
    \algrenewcommand\algorithmicrequire{\textbf{Input:}}
    \algrenewcommand\algorithmicensure{\textbf{Output:}}
    \caption{The General Parametric Approximation Algorithm}\label{algo::apx}
    \begin{algorithmic}[1]
    \Require{Instance~$\Pi=(\lambda,X,f)$ of a parametric optimization problem and $\varepsilon\geq 0$%
    }
    \Ensure{$S\subseteq X$ such that $\appri(S)\subseteq\lowiepsarg{(1+\varepsilon)a}$}
    \State $\lambda^0\gets$ arbitrary $\lambda\in\Lambda$
    \State $V\gets\{\lambda^0\}$
    \State $x^0\gets\oracle_{\alpha}(\lambda^0)$
    \State $S\gets\{x^0\}$
    \State Compute $\vertx\left(\appri(S)\right)$
    \For{$(\lambda,z)\in \vertx\left(\appri(S)\right)$}
    \If{$\lambda\notin V$}
        \State $V\gets V\cup\{\lambda\}$
        \State $x\gets\oracle_{\alpha}(\lambda)$
        \If{$(1+\varepsilon)F_{\lambda}(x)<z$}
            \State $S\gets S \cup\{x\}$
            \State \textbf{goto} Step~5
        \EndIf
    \EndIf
    \EndFor
    \State \textbf{optional:} Remove redundant solutions from $S$
    \State \Return $S$
    \end{algorithmic}
\end{algorithm}

\begin{theorem}\label{theorem::algo::apxGuarantee}
    For every $\varepsilon\geq0,\alpha\geq1$, Algorithm~\ref{algo::apx} computes a $(1+\varepsilon)\alpha$-approximation set for~$\Pi$.
\end{theorem}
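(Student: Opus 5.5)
The plan is to prove correctness at termination; termination itself holds because each successful oracle call adds a new $\lambda$ to $V$, and there are only finitely many solutions in $\orsols$ and hence finitely many possible vertex sets. I will show that when the algorithm stops, $\appri(S)\subseteq\lowiepsarg{(1+\varepsilon)\alpha}$. By the observation in Section~\ref{sec::prelims::lowerImage}, this containment suffices.

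Equivalently, I need, for every $\lambda\in\Lambda$, that $g(\lambda)\coloneqq\min_{x\in S}F_\lambda(x)\le(1+\varepsilon)\alpha F^*(\lambda)$. Since $\Lambda$ is a polytope, $\appri(S)$ is a polyhedron whose recession cone is $\{0\}\times\mathbb{R}_{\le0}$. Its upper boundary is the graph of $g$, which is concave and piecewise affine on $\Lambda$. The vertices of $\appri(S)$ with maximal $z$ are exactly the breakpoints $(\lambda^j,g(\lambda^j))$. Each $\lambda\in\Lambda$ is a convex combination $\lambda=\sum_j\mu_j\lambda^j$ of vertices lying in a common maximal face, so that $g(\lambda)=\sum_j\mu_j g(\lambda^j)$.

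\textbf{Step 1: vertex check.} At termination the for-loop has passed over every vertex $(\lambda^j,z_j)$ of the final $\appri(S)$ without jumping. Fix such a vertex. Either $\lambda^j$ was queried in the current pass and the test failed, or it was queried earlier, i.e.\ $\lambda^j\in V$. In both cases I claim some $x\in S$ satisfies $F_{\lambda^j}(x)\le\alpha F^*(\lambda^j)$ or $z_j\le(1+\varepsilon)F_{\lambda^j}(x_j)$, where $x_j=\oracle_\alpha(\lambda^j)$.

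If the oracle's solution was added to $S$ at that earlier time, then $z_j=g(\lambda^j)\le F_{\lambda^j}(x_j)$. Otherwise the test failed, giving $(1+\varepsilon)F_{\lambda^j}(x_j)\ge z'\ge g(\lambda^j)$, where $z'$ is the value of $g$ at $\lambda^j$ at that earlier time; this uses that $g$ is monotonically nonincreasing as $S$ grows. The subtle point is whether $\lambda^j$ was a vertex parameter at that earlier time. This does not matter, since $z'$ is the value of the upper boundary above $\lambda^j$, whose query at that time produced the recorded comparison. I expect this bookkeeping with $V$ to be the main obstacle, and I handle it via the monotonicity just described. In all cases $g(\lambda^j)\le(1+\varepsilon)F_{\lambda^j}(x_j)\le(1+\varepsilon)\alpha F^*(\lambda^j)$, using Assumption~\ref{assumption::general}(c),(d) and $F^*\ge0$.

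\textbf{Step 2: convexity.} $F^*$ is a pointwise minimum of affine functions, hence concave. For $\lambda=\sum_j\mu_j\lambda^j$ as above,
\[
g(\lambda)=\sum_j\mu_j g(\lambda^j)\le(1+\varepsilon)\alpha\sum_j\mu_jF^*(\lambda^j)\le(1+\varepsilon)\alpha F^*(\lambda).
\]
Thus each $(\lambda,z)\in\appri(S)$ with $z\le g(\lambda)$ lies in $\lowiepsarg{(1+\varepsilon)\alpha}$, which is the required containment.

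\textbf{Step 3: optional cleaning.} Removing solutions whose hyperplanes $h(x)$ do not support facets leaves $\appri(S)$ unchanged. The guarantee from Step~2 is therefore preserved, and the final $S$ is a $(1+\varepsilon)\alpha$-approximation set.
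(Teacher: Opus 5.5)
Your proof is correct and follows essentially the same route as the paper: you get a guarantee at each vertex from the algorithm's checks, and you cover parameters queried in earlier iterations by noting that the upper boundary only decreases as $S$ grows. You then extend this to all $\lambda\in\Lambda$ by writing $(\lambda,g(\lambda))$ as a convex combination of vertices, where the paper instead argues by contradiction using linearity of $F_\lambda(x^*)$ for a hypothetical better solution $x^*$, which is equivalent to your direct use of the concavity of $F^*$.
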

\begin{proof}
    Our proof works by contradiction: we use the convexity of the maximal facets of~$\appri(S)$ to show that, if there is any~$\lambda\in\Lambda$ for which~$S $ does not contain a $(1+\varepsilon)\alpha$-approximate solution of~$\Pi(\lambda)$, the algorithm could not have terminated.

    Let $\lambda\in\Lambda$, and let $(\lambda,z)\in\appri(S)$ be a point in a maximal facet~$\varphi$ of~$\appri(S)$.
    By construction of~$\appri(S)$, there exists a solution~$x\in S$ with $\varphi\subseteq h(x)$, in particular $F_{\lambda}(x)=z$ by definition of~$h(x)$ (see Section~\ref{sec::prelims::lowerImage}).
    Let $v^1=(\lambda^1,z^1),\ldots,v^k=(\lambda^k,z^k)$ be the vertices of $\varphi$.
    Note that $z^i=F_{\lambda^i}(x)$ for each $i\in\{1,\ldots,k\}$.
    Because~$\varphi$ is convex, there is an $\ell\in\mathbb{R}_{\geq0}^k$ with $\|\ell\|_1=1$ such that $\sum_{i=1}^k \ell_i v^i = (\lambda, z)$ and, thus, $F_\lambda(x)=\sum_{i=1}^k \ell_i F_{\lambda^i}(x)$.

    For the sake of contradiction, assume that there is a solution~$x^*\in X$ with $(1+\varepsilon) \alpha\cdot F_{\lambda}(x^*)<F_{\lambda}(x)$.
    Since $F_\lambda$ is linear in $\lambda$, we obtain $F_\lambda(x^*)=\sum_{i=1}^k \ell_i F_{\lambda^i}(x^*)$.
    Therefore, $(1+\varepsilon)\alpha\cdot\sum_{i=1}^k \ell_i F_{\lambda^i}(x^*)<\sum_{i=1}^k \ell_i F_{\lambda^i}(x)$, so there must be at least one $i\in\{1,\ldots,k\}$ with $(1+\varepsilon)\alpha\cdot F_{\lambda^i}(x^*)<F_{\lambda^i}(x)$.
    Let $\tilde{\imath}$ denote such an index.

    Since $v^{\tilde{\imath}}$ is a vertex of $\appri(S)$, Algorithm~\ref{algo::apx} must, at some point, have picked a vertex $(\lambda^{\tilde{\imath}},\tilde{z}^{\tilde{\imath}})$ with $\tilde{z}^{\tilde{\imath}}\geq z^{\tilde{\imath}}$ in Step~6 and called $\oracle_\alpha(\lambda^{\tilde{\imath}})$  in Step~9.
    Then, it was either verified that $(1+\varepsilon)\cdot\alpha\cdot F^*(\lambda_{\tilde{\imath}})\geq \tilde{z}^{\tilde{\imath}}$, or an $\alpha$-approximate solution of~$\Pi(\lambda^{\tilde{\imath}})$ was added to~$S$, cutting of~$(\lambda^{\tilde{\imath}},\tilde{z}^{\tilde{\imath}})$ from $\appri(S)$.
    In both cases, it is then ensured that~$S$ contains a $(1+\varepsilon)\alpha$-approximate solution of~$\Pi(\lambda^{\tilde{\imath}})$.

    Since $(\lambda^{\tilde{\imath}},z^{\tilde{\imath}})$ is in the hyperplane~$h(x)$, we know that $F_{\lambda_{\tilde{\imath}}}(x)\leq F_{\lambda_{\tilde{\imath}}}(x')$ for every~$x'\in S$.
    Therefore, $x$ must be a $(1+\varepsilon)\alpha$-approximate solution of~$\Pi(\lambda_{\tilde{\imath}})$.
    This contradicts $(1+\varepsilon)\cdot\alpha\cdot F_{\lambda_{\tilde{\imath}}}(x^*)<F_{\lambda_{\tilde{\imath}}}(x)$.
\end{proof}

\subsection{The Exact Case}\label{sec::algo::exact}

We analyze the running time of Algorithm~\ref{algo::apx} in the exact case, where $\varepsilon=0$ and $\alpha=1$.
A similar running time analysis for the dual Benson algorithm can be found in~\cite{bokler.mutzel2015OutputSensitiveAlgorithmsEnumerating}, and in more detail in~\cite{bokler2018OutputsensitiveComplexityMultiobjective}.
While similar, our analysis slightly deviates from the ones conducted therein.

In this section, we denote by $\novert$, $\nofaces$, and $\nofacet$ the number of maximal vertices, faces, and facets of~$\lowi$, respectively (see Section~\ref{sec::prelims::lowerImage}).
Let $S^*$ be an optimal solution set of minimum cardinality.
Observe that there must be exactly one solution per maximal facet of $\lowi$, i.e.,  $\left|S^*\right|=\nofacet$.
By the upper bound theorem~\cite{seidel1995UpperBoundTheorem}, $\nofaces \in \bigO{\left({(\nofacet+m)}^{\left\lfloor\frac{p}{2}\right\rfloor}\right)}=\bigO{\left({(\left|S^*\right|+m)}^{\left\lfloor\frac{p}{2}\right\rfloor}\right)}$.

We first observe that, in the exact variant, our algorithm fits the skeleton algorithm for outer approximations from~\cite{csirmaz2021InnerApproximationAlgorithm}.
Solving the fixed-parameter problem~$\Pi(\lambda)$ and refining~$\appri(S)$ with a halfspace~$H(x)$ for~$\lowi$ if~$H(x)$ cuts of $(\lambda,z)$ constitutes a \emph{weak point separating oracle} as described in~\cite{csirmaz2021InnerApproximationAlgorithm}.
This allows us to transfer the following result from~\cite{csirmaz2021InnerApproximationAlgorithm}:
\begin{lemma}[\Citeauthor{csirmaz2021InnerApproximationAlgorithm}~\cite{csirmaz2021InnerApproximationAlgorithm}]\label{lemma::exactfaces}
    For $\varepsilon=0$ and $\alpha=1$, Algorithm~\ref{algo::apx} makes at most $\novert + \nofaces$ calls to the oracle~$\oracle_\alpha$, and adds at most~$\nofaces$ solutions to~$S$.
\end{lemma}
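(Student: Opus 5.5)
We prove the lemma by a charging argument: every oracle call is charged to a maximal vertex or a maximal face of~$\lowi$, and distinct calls receive distinct charges. This is the counting behind the skeleton outer approximation algorithm of~\cite{csirmaz2021InnerApproximationAlgorithm}. We fix $\varepsilon=0$ and $\alpha=1$, so $\oracle_1(\lambda)$ returns an optimal solution~$x$ of~$\Pi(\lambda)$. Then $h(x)$ is a supporting hyperplane of~$\lowi$ at $(\lambda,F^*(\lambda))$, and $h(x)\cap\lowi$ is a maximal face of~$\lowi$. Call it $\varphi(x)$; if several faces qualify, take the one containing $(\lambda,F^*(\lambda))$ in its relative interior.

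\textbf{Step 1: solutions added to~$S$.} Consider an iteration where the call at vertex $(\lambda,z)$ of~$\appri(S)$ adds~$x$, i.e.\ $F^*(\lambda)<z$. We claim that no solution previously in~$S$ has a hyperplane containing $\varphi(x)$. If some $x'\in S$ had $h(x')\supseteq\varphi(x)$, then $F_{\lambda}(x')=F^*(\lambda)$, because $(\lambda,F^*(\lambda))\in\varphi(x)$. But every point of $\appri(S)$ above~$\lambda$ has height at most $F_\lambda(x')$, which contradicts $z>F^*(\lambda)$. Once $x$ is added, $h(x)\supseteq\varphi(x)$, so the face $\varphi(x)$ can never serve as the ``new'' face of a later addition. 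The map $x\mapsto\varphi(x)$ is therefore injective on the added solutions, and at most $\nofaces$ solutions are added.

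\textbf{Step 2: oracle calls that do not add a solution.} Such a call at $(\lambda,z)$ certifies $z=F^*(\lambda)$, so $(\lambda,z)$ is a point of $\lowi$ lying on the upper boundary. It is also a vertex of $\appri(S)\supseteq\lowi$, hence a vertex of~$\lowi$, and since it lies in a maximal face it is a maximal vertex. The set~$V$ prevents calling the oracle twice at the same~$\lambda$, so distinct failing calls correspond to distinct maximal vertices, giving at most $\novert$ of them. Every addition costs exactly one call, so the total number of calls is at most $\novert+\nofaces$.

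The main obstacle is Step~1. We must check that $\varphi(x)$ is well defined as a face of~$\lowi$, which requires $\lambda$ to lie in the relevant part of~$\Lambda$; this holds because every vertex of $\appri(S)$ projects into~$\Lambda$. We must also make sure the injectivity argument does not double count when a returned optimal solution supports a lower-dimensional face. If the direct argument becomes delicate, the fallback is to verify that the oracle-plus-cut step satisfies the axioms of a weak point separating oracle in~\cite{csirmaz2021InnerApproximationAlgorithm} and then quote their bound directly.
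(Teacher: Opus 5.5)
Your argument is correct, but it takes a different route from the paper. The paper gives no proof of its own. It observes that, for $\varepsilon=0$ and $\alpha=1$, solving $\Pi(\lambda)$ at a vertex $(\lambda,z)$ and cutting with $H(x)$ is a weak point separating oracle in Csirmaz's sense. Algorithm~\ref{algo::apx} is therefore an instance of his skeleton outer-approximation algorithm, and the bound is imported unchanged. That is exactly your fallback.

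You instead prove the count directly with two injective charges:
\begin{itemize}
\item Each added solution $x$ is charged to the maximal face $h(x)\cap\lowi$. This is injective because an earlier $x'\in S$ with $h(x')\supseteq h(x)\cap\lowi$ would force $z\le F_\lambda(x')=F^*(\lambda)$ at the cut vertex.
\item Each non-adding call is charged to the vertex $(\lambda,F^*(\lambda))$ of $\lowi$, with injectivity coming from the bookkeeping set $V$.
\end{itemize}
Your route gives a self-contained proof that shows which features of Algorithm~\ref{algo::apx} the bound actually uses: the supporting point $(\lambda,F^*(\lambda))$ lies directly below the cut vertex, the algorithm restarts at Step~5 after each cut, and repeated parameters are skipped via $V$. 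It also handles the unconditional initial call at $\lambda^0$ explicitly, since $x^0$ is covered by the same injectivity argument. The paper's route is shorter and places the algorithm in the Benson-type framework. However, it leaves to the reader the check that Algorithm~\ref{algo::apx}'s control flow really matches Csirmaz's skeleton.

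Two small points:
\begin{itemize}
\item The hedge about ``several faces'' is unnecessary. $h(x)\cap\lowi$ is a single face, and it is automatically maximal because $h(x)$ is non-vertical: its $z$-coefficient is $-1$.
\item In Step~2 you should say why $z\geq F^*(\lambda)$. The direction $(0,-1)\in\mathbb{R}^p\times\mathbb{R}$ is a recession direction of $\appri(S)$, so every vertex of $\appri(S)$ satisfies $z=\min_{x'\in S}F_\lambda(x')\geq F^*(\lambda)$.
\end{itemize}
The worries in your last paragraph are already settled by your own argument, since the dimension of the charged face never enters the count. The fallback is not needed.
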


We also observe that, since every solution returned by~$\oracle_\alpha$ is in~$\orsols$, all halfspaces that appear in Algorithm~\ref{algo::apx} have an encoding length in~$\bigO{\left(\poly\langle\Pi\rangle\right)}$.
Therefore, every vertex that is picked in Step~6 has polynomial encoding length~\cite{bokler.mutzel2015OutputSensitiveAlgorithmsEnumerating}.
As a consequence, we can assume that if~$\oracle_\alpha$ is a polynomial-time algorithm, then every call to~$\oracle_\alpha$ in Algorithm~\ref{algo::apx} runs in time~$\bigO{\left(\poly\langle\Pi\rangle\right)}$
(see also~\cite{bokler.mutzel2015OutputSensitiveAlgorithmsEnumerating}).

\begin{theorem}\label{theorem::algo::runningtimeExact}
    For $\varepsilon=0$ and $\alpha=1$, Algorithm~\ref{algo::apx} runs in time
    \[
    \bigO{\left(\nofaces\cdot\left(\nofaces^{\lfloor\frac{p}{2}\rfloor}+\nofaces\log\nofaces+T_{\oracle} \right)\right)},
    \]
    where~$T_{\oracle}$ is a running time bound for the oracle~$\oracle_\alpha$.
\end{theorem}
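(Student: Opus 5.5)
The bound follows by counting iterations with Lemma~\ref{lemma::exactfaces} and then bounding the cost of a single iteration. An \emph{iteration} is one execution of Steps~5 to~13 up to either a \textbf{goto} or termination. By Lemma~\ref{lemma::exactfaces}, at most $\nofaces$ solutions are added to~$S$. Every iteration except the last adds exactly one solution, so there are at most $\nofaces+1\in\bigO(\nofaces)$ iterations. The same lemma bounds the total number of oracle calls by $\novert+\nofaces\in\bigO(\nofaces)$, since every maximal vertex is a face. The oracle contributes $\bigO(\nofaces\cdot T_{\oracle})$ in total. This is consistent with the claimed term $\nofaces\cdot T_{\oracle}$, even though one oracle call per iteration would already suffice.

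The remaining cost per iteration is in Step~5, the computation of $\vertx(\appri(S))$, and in scanning the vertex list while checking membership in~$V$. At every point, $\appri(S)$ is described by the $m$ inequalities of~$\Lambda$ and at most $|S|\le\nofaces+1$ halfspaces $H(x)$, all in dimension $p+1$. A vertex enumeration by an optimal convex hull algorithm in fixed dimension~\cite{chazelle1993OptimalConvexHull} on $n$ halfspaces in $\mathbb{R}^{p+1}$ takes time $\bigO(n\log n + n^{\lfloor (p+1)/2\rfloor})$, which gives $\bigO(\nofaces\log\nofaces + \nofaces^{\lfloor\frac{p}{2}\rfloor})$ up to the dimension bookkeeping. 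The number of constraints of $\Lambda$ satisfies $m\le \nofaces$, because each non-redundant constraint of $\Lambda$ induces a vertical facet of $\lowi$, and it can be absorbed in this way. To make the exponent exactly $\lfloor p/2\rfloor$, I would exploit that only the maximal vertices matter, i.e., the upper envelope of the hyperplanes $h(x)$ over the polytope~$\Lambda$. Projected to~$\Lambda$, this is a $p$-dimensional cell complex, so the upper bound theorem~\cite{seidel1995UpperBoundTheorem} yields $\bigO(\nofaces^{\lfloor p/2\rfloor})$ vertices, as already noted above the lemma. Storing~$V$ in a balanced search tree then makes each membership test cost $\bigO(\log\nofaces)$ with polynomial-length keys, so scanning the vertex list costs no more than the enumeration itself.

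Multiplying the per-iteration cost $\bigO(\nofaces^{\lfloor p/2\rfloor}+\nofaces\log\nofaces)$ by $\bigO(\nofaces)$ iterations and adding the total oracle cost gives the stated bound. I expect the main obstacle to be justifying the exponent $\lfloor p/2\rfloor$ rather than $\lfloor (p+1)/2\rfloor$ for the vertex enumeration of a $(p+1)$-dimensional polyhedron. I would first argue via the fact that $\appri(S)$ is a vertical prism-like region over~$\Lambda$ whose relevant structure is $p$-dimensional. If that fails, I would instead rely on incremental updates, in which only the vertices cut off by the single new halfspace are recomputed.
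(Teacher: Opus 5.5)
Your skeleton matches the paper's: $\bigO(\nofaces)$ iterations by Lemma~\ref{lemma::exactfaces}, one run of Chazelle's algorithm per iteration on $m+|S'|\in\bigO(\nofaces)$ inequalities, $V$ kept in a balanced search tree, and $\bigO(\nofaces)$ oracle calls in total. Two steps, however, do not go through as written.

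The first is your attempt to obtain the exponent $\lfloor p/2\rfloor$ by viewing the maximal vertices of $\appri(S')$ as a $p$-dimensional cell complex over $\Lambda$. That argument is false, because the complexity of the lower envelope of $n$ hyperplanes in $\mathbb{R}^{p+1}$ is governed by the ambient dimension $p+1$. Already for $p=1$, $n$ lines over an interval can have $n-1$ breakpoints, not $\bigO(n^{0})$; at termination $\appri(S)=\lowi$ has $\Theta(\nofaces)$ maximal vertices. For $p=3$, duality with a cyclic polytope in $\mathbb{R}^4$ gives $\Theta(n^2)$ envelope vertices rather than $\bigO(n)$. Moreover, a bound on the number of vertices would not by itself improve the running time, since the $n^{\lfloor d/2\rfloor}$ term in Chazelle's bound is not output-sensitive. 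Your incremental-update fallback is also left unspecified.

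The paper offers no structural argument here either. It simply invokes Chazelle's algorithm on the $\bigO(\nofaces)$ inequalities and writes the exponent as $\lfloor p/2\rfloor$. Applied in $\mathbb{R}^{p+1}$, that citation yields $\bigO(\nofaces^{\lfloor (p+1)/2\rfloor}+\nofaces\log\nofaces)$ per enumeration. This agrees with the stated bound for even $p$ and for $p=1$. So your concern about the exponent is legitimate, but the projection argument cannot remove it.

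The second gap is your claim that scanning the vertex list costs no more than the enumeration. Each membership test costs $\bigO(\log\nofaces)$, so scanning $K$ vertices can cost on the order of $K\log\nofaces$. This is not dominated by $\nofaces^{\lfloor p/2\rfloor}+\nofaces\log\nofaces$ when $K$ is of order $\nofaces^{\lfloor p/2\rfloor}$ and $p\geq 4$.

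The paper handles this with a counting argument that your proposal lacks:
\begin{itemize}
\item The vertices of $\appri(S')$ have pairwise distinct $\lambda$-values. So in one iteration at most $|V|\leq 1+\novert+\nofaces\in\bigO(\nofaces)$ scanned vertices can have $\lambda\in V$, costing $\bigO(\nofaces\log\nofaces)$ per iteration.
\item Every scanned vertex with $\lambda\notin V$ triggers an oracle call. Lemma~\ref{lemma::exactfaces} allows only $\bigO(\nofaces)$ of these over the whole run, for a total of $\bigO(\nofaces(\log\nofaces+T_{\oracle}))$.
\item Since the loop returns to Step~5 at the first cut, vertices that are never scanned cost nothing beyond their enumeration.
\end{itemize}

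There is also a minor slip in your justification of $m\leq\nofaces$. Vertical facets of $\lowi$ are not maximal faces, so they do not witness this inequality. Use instead that above each facet of $\Lambda$ lies a maximal $(p-1)$-dimensional face of $\lowi$, with distinct facets giving distinct faces.
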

\begin{proof}
    The proof is split into two parts:
    First, we discuss the accumulated running time from Steps~5 to~12.
    Then, we show that the remaining operations are (asymptotically) dominated by this accumulated running time.

    By Lemma~\ref{lemma::exactfaces}, at most~$\nofaces$ elements are added to~$S$.
    Therefore, Algorithm~\ref{algo::apx} jumps to Step~5 at most $\nofaces$~times.
    We refer to one run through the Steps~5 to~12 as an iteration.

    Let~$S'$ be the set of solutions that are in~$S$ at the start of an arbitrary iteration.
    This iteration starts with Step~5: the enumeration of the vertices of the current polyhedron $\appri(S')$, which is defined by $m+\left|S'\right| \leq m+\nofaces \in \bigO(\nofaces)$ inequalities (recall that, by Assumption~\ref{assumption::general}, there are no redundant inequalities in the description of~$\Lambda$ and, thus, $m\leq\nofaces$).
    The vertex enumeration can then be done in $\bigO{\left(\nofaces^{\lfloor\frac{p}{2}\rfloor}+\nofaces\log\nofaces\right)}$ using the algorithm from~\cite{chazelle1993OptimalConvexHull}.

    \medskip

    In Step~6, the algorithm loops over the vertices of~$\appri(S')$.
    For every vertex~$(\lambda,z)$ of~$\appri(S')$, the algorithm must check if the parameter vector~$\lambda$ has been given to~$\oracle_\alpha$ in a previous iteration, which is done by checking if~$\lambda$ is in~$V$ in Step~7.
    At this point, there are at most $\bigO(\nofaces)$ elements in~$V$.
    The reason is the following:
    Elements are only added to~$V$ in Step~8 (and once in Step~2).
    This is always accompanied by a call to~$\oracle_\alpha$ in Step~9.
    Hence, the bound from Lemma~\ref{lemma::exactfaces} for the number of oracle calls can also be applied to the number of elements added to~$V$, which is, thus, upper-bounded by $\novert+\nofaces\in\bigO(\nofaces)$.
    By maintaining~$V$ as a balanced binary search tree, every single check in Step~7 then needs at most $\bigO(\log\nofaces)$ time.
    The same reasoning also applies to the insertions in Step~8.

    There are two possible outcomes of the check in Step~7.
    Either, $\lambda$ is in~$V$ and the loop from Step~6 proceeds to the next vertex of~$\appri(S')$, or~$\lambda$ is not in~$V$ and the algorithm proceeds to Step~8.
    We consider both cases separately.
    For the first case, observe that all the vertices of~$\appri(S')$ have distinct $\lambda$~values by construction.
    Since there are at most $\bigO(\nofaces)$ elements in~$V$, the first case can then only be encountered $\bigO(\nofaces)$ times in a single iteration.
    Thus, its accumulated running time is at most $\bigO(\nofaces\log\nofaces)$, which vanishes in the running time bound for the vertex enumeration from Step~5.

    This leaves the second case.
    By Lemma~\ref{lemma::exactfaces}, this case is encountered at most $\bigO(\nofaces)$ times over all iterations together.
    Each time, Step~7 and Step~8 run in $\bigO(\log\nofaces)$ time and Step~9 runs in $\bigO(T_{\oracle})$ time.
    The remaining Steps~10 to~12 run in $\bigO(1)$ time.

    To summarize up to this point: Algorithm~\ref{algo::apx} has at most $\bigO(\nofaces)$ iterations.
    In each iteration, the vertex enumeration accumulates running time up to $\bigO{\left(\nofaces^{\lfloor\frac{p}{2}\rfloor}+\nofaces\log\nofaces\right)}$.
    The running time of the first case from above in a single iteration is at most $\bigO(\nofaces\log\nofaces)$.
    Additionally, over all iterations together, the running times of Step~7 and of Steps~8 to~12 are all in $\bigO{\left(\nofaces\cdot(\log\nofaces+T_{\oracle})\right)}$.
    Put together and simplified, this amounts to
    \[
    \bigO{\left(\nofaces\cdot\left(\nofaces^{\lfloor\frac{p}{2}\rfloor}+\nofaces\log\nofaces+T_{\oracle} \right)\right)}.
    \]


    This leaves the running times of Steps~1 to~4 and Step~13.
    Determining an arbitrary~$\lambda^0$ in Step~1 is not harder than finding a vertex of~$\Lambda$ and can be counted as one additional vertex enumeration.
    Step~3 needs $\bigO(T_{\oracle})$ time, and Steps~2 and~4 need $\bigO(1)$ time.
    Both running times are all dominated by the term above.

    The overhead of removing redundant solutions from~$S$ in Step~13 is also dominated.
    This can be seen by looking at a simple brute force approach for the redundancy removal:%
\footnote{Specialized redundancy removal algorithms might achieve better running times.}
    After the last iteration, the set of vertices of~$\lowi$ is known.
    A facet enumeration for~$\lowi$ can then be performed in $\bigO{\left(\novert^{\lfloor\frac{p}{2}\rfloor}+\novert\log\novert\right)}\subseteq \bigO{\left(\nofaces^{\lfloor\frac{p}{2}\rfloor}+\nofaces\log\nofaces\right)}$ and results in at  most $\bigO{\left(\novert^{\lfloor\frac{p}{2}\rfloor}\right)}\subseteq \bigO{\left(\nofaces^{\lfloor\frac{p}{2}\rfloor}\right)}$ facets~\cite{seidel1995UpperBoundTheorem}.
    For each~$x\in S$, we check whether~$h(x)$ contains a facet.
    If not, $x$ is removed from~$S$.
    This requires at most $\bigO{\left(\left|S\right|\nofaces^{\lfloor\frac{p}{2}\rfloor}\right)}\subseteq \bigO{\left(\nofaces\cdot\nofaces^{\lfloor\frac{p}{2}\rfloor}\right)}$ pair-wise comparisons between hyperplanes and facets.
    By assuming that $S$ is maintained as an array or a linked list, every single removal operation can be done in $\bigO(1)$.
    The entire running time of the brute-force redundancy removal then is in $\bigO{\left(\nofaces \cdot \left(\nofaces^{\lfloor\frac{p}{2}\rfloor}+ \log\nofaces\right)\right)}$, which is dominated by the term above.
\end{proof}

\subsection{The Approximation Case}\label{sec::algo::apx}

For the approximation case, Lemma~\ref{lemma::exactfaces} no longer applies.
The arguments regarding the encoding length of the vertices and facets of~$\appri(S)$, however, still apply.
Hence, if~$\oracle_\alpha$ is a polynomial-time algorithm, each call to the oracle in Algorithm~\ref{algo::apx} runs in polynomial time.
Throughout this section, we denote the set~$S$ immediately before the optional redundancy removal in Step~13 by~$\Smax$, and we use~$\smax$ to denote its cardinality.
The cardinality of the set~$S$ returned by Algorithm~\ref{algo::apx} is, therefore, at most~$\smax$.

\begin{theorem}\label{theorem::time::apxout}
    For an $\varepsilon>0$ and $\alpha\geq1$, Algorithm~\ref{algo::apx} is finite and runs in time
    \[
        \bigO{\left(\smax\cdot{\left(\smax+m\right)}^{\left\lfloor\frac{p}{2}\right\rfloor}\cdot\left(p\cdot\log{\left(\smax+m\right)}+ T_{\oracle}\right)\right)},
    \]
    where $T_{\oracle}$ is a running time bound for the oracle $\oracle_\alpha$.
\end{theorem}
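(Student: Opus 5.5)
Two things have to be shown: the algorithm terminates, so that $\smax$ is finite, and the running time satisfies the stated bound. Both follow the accounting in the proof of Theorem~\ref{theorem::algo::runningtimeExact}. The difference is that Lemma~\ref{lemma::exactfaces} is unavailable here, so every count is expressed through $\smax$ and $m$ instead of through $\nofaces$.

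\textbf{Finiteness.} Every solution returned by $\oracle_\alpha$ lies in $\orsols$, and $\orsols$ is finite. A solution $x$ is added only if $(1+\varepsilon)F_\lambda(x)<z$. In that case $F_\lambda(x)<z$, because $F_\lambda(x)\ge 0$ by Assumption~\ref{assumption::general}(c) and $\varepsilon>0$ (if $z\le 0$ the strict inequality cannot hold). Hence $H(x)$ cuts off the vertex $(\lambda,z)$ of $\appri(S)$. This means $x\notin S$, since every point of $\appri(S)$ lies in $H(x')$ for all $x'\in S$. So each addition inserts a new element of the finite set $\orsols$, there are at most $|\orsols|$ jumps back to Step~5, and each pass through the loop in Step~6 is finite. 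This gives termination and $\smax\le|\orsols|$.

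\textbf{Per-iteration cost.} Exactly one solution is added per jump to Step~5, so there are at most $\smax$ iterations of Steps~5--12. In each iteration, $\appri(S')$ is described by at most $m+\smax$ inequalities in dimension $p+1$. By the upper bound theorem~\cite{seidel1995UpperBoundTheorem}, it therefore has $\bigO((\smax+m)^{\lfloor\frac{p+1}{2}\rfloor})$ vertices. Using~\cite{chazelle1993OptimalConvexHull}, the vertex enumeration in Step~5 runs in $\bigO((\smax+m)^{\lfloor p/2\rfloor}\cdot p\log(\smax+m))$, matching the form of the claimed bound with the constant $p$ kept explicit. For each enumerated vertex, Step~7 performs a lookup in $V$, stored as a balanced search tree. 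The set $V$ only grows when an oracle call is made, and every oracle call concerns a vertex of some intermediate polyhedron. So $|V|\le\smax\cdot\bigO((\smax+m)^{\lfloor p/2\rfloor})$, and a lookup on keys of $p$ coordinates costs $\bigO(p\log(\smax+m))$.

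\textbf{Oracle calls.} In the worst case, every vertex of the current polyhedron triggers an oracle call. So each iteration costs at most the number of vertices times $(p\log(\smax+m)+T_{\oracle})$, plus the enumeration cost. Multiplying by $\smax$ iterations gives the stated bound. Steps~1--4 are dominated by this. For Step~13 I would reuse the brute-force facet check from Theorem~\ref{theorem::algo::runningtimeExact}, bounding the number of faces of $\appri(\Smax)$ by $\bigO((\smax+m)^{\lfloor p/2\rfloor})$, so it is dominated as well.

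\textbf{Main obstacle.} The delicate step is reconciling the vertex count with the exponent $\lfloor p/2\rfloor$ instead of $\lfloor (p+1)/2\rfloor$ for a polyhedron in $\mathbb{R}^{p+1}$. I would first try to use that the relevant vertices are maximal, so they project bijectively onto the regions of a lower-envelope subdivision of the $p$-dimensional polytope $\Lambda$. That subdivision behaves like a $p$-dimensional complex, and its vertex count should be $\bigO((\smax+m)^{\lfloor p/2\rfloor})$ via the upper bound theorem applied as in the exact case. If this fails, I would fall back on the paper's convention from Theorem~\ref{theorem::algo::runningtimeExact}, where the same exponent is used for $\appri(S')$.
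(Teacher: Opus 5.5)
Your accounting matches the paper's proof. Finiteness comes from $|\orsols|<\infty$, and there are at most $\smax$ iterations of Steps~5--12. Each iteration does a vertex enumeration via \cite{chazelle1993OptimalConvexHull} and \cite{seidel1995UpperBoundTheorem}, keeps $V$ as a balanced search tree of size at most $\smax$ times the per-iteration vertex count, makes at most one oracle call per vertex, and the total is multiplied by $\smax$. Your finiteness argument is more careful than the paper's, since you check (via $F_\lambda(x)\geq 0$ and $\varepsilon>0$) that every added solution is genuinely new.

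The gap is the one you flag yourself, and neither of your remedies closes it. Since $\appri(S')\subseteq\mathbb{R}^{p+1}$ has at most $\smax+m$ facets, the upper bound theorem and Chazelle's algorithm give $\bigO\bigl((\smax+m)^{\lfloor (p+1)/2\rfloor}\bigr)$ vertices and enumeration time $\bigO\bigl((\smax+m)^{\lfloor (p+1)/2\rfloor}+(\smax+m)\log(\smax+m)\bigr)$. Your sentence claiming an enumeration cost with exponent $\lfloor p/2\rfloor$ is unsupported and contradicts the sentence before it.

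Projecting onto $\Lambda$ cannot lower the count. Every vertex of $\appri(S')$ lies on the graph of $\lambda\mapsto\min_{x\in S'}F_\lambda(x)$, so the projection is injective on vertices. The induced subdivision of $\Lambda$ is the minimization diagram of $|S'|$ affine functions, whose worst-case vertex count is $\Theta(|S'|^{\lceil p/2\rceil})$. Already for $p=1$, take $|S'|$ tangent lines to a positive concave function on $[\lambda_{\min},\lambda_{\max}]$. They give $|S'|+1$ vertices, whereas the exponent $\lfloor p/2\rfloor=0$ would claim $\bigO(1)$; the $n\log n$ term of Chazelle's bound would then not be dominated either.

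The paper's proof asserts the same bound $\bigO\bigl((\smax+m)^{\lfloor p/2\rfloor}\bigr)$, citing \cite{seidel1995UpperBoundTheorem}, and the proof of Theorem~\ref{theorem::algo::runningtimeExact} does the same. So falling back on that convention only imports a step that is wrong for odd $p$. What your argument, like the paper's, actually proves is the stated bound with $\lfloor p/2\rfloor$ replaced by $\lceil p/2\rceil=\lfloor (p+1)/2\rfloor$ throughout, including your Step~13 face count. This agrees with the statement for even $p$ and remains polynomial for fixed $p$, so the conclusions drawn together with Theorem~\ref{theorem::time::apxpoly} are unaffected.
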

\begin{proof}
    The finiteness of the set~$\orsols$ directly proves the finiteness of Algorithm~\ref{algo::apx}:
    Every solution in~$\orsols$ can be added to~$S$ at most once, and, therefore, Algorithm~\ref{algo::apx} jumps to Step~5 at most $\left|\orsols\right|$ times.
    We now analyze the time Algorithm~\ref{algo::apx} spends between one such jump and the next jump.
    Again, we call this an iteration.

    After an arbitrary jump to Step~5, let~$S'$ be the set of currently known solutions.
    By construction, $\left|S'\right|\leq \smax$.
    First, the vertices of the polyhedron~$\appri(S')$ are enumerated in Step~5.
    Since there are at most $\left|S'\right|+m\leq\smax+m$ inequalities defining~$\appri(S')$, this takes time
    $\bigO{\left({(\smax+m)}^{\left\lfloor\frac{p}{2}\right\rfloor} + (\smax+m)\cdot \log (\smax+m)\right)}$~\cite{chazelle1993OptimalConvexHull} and results in at most $\bigO{\left({(\smax+m)}^{\left\lfloor\frac{p}{2}\right\rfloor}\right)}$ vertices~\cite{seidel1995UpperBoundTheorem}.

    For every vertex~$(\lambda,z)$ of~$\appri(S')$, it is checked in Step~7 if~$\lambda$ is already in~$V$.
    Let~$V$ be maintained as balanced binary search tree.
    At this point in Algorithm~\ref{algo::apx}, there are not more than~$\smax$ previous iterations, and in each iteration at most $\bigO{\left({(\smax+m)}^{\left\lfloor\frac{p}{2}\right\rfloor}\right)}$ vertices were enumerated, checked, and subsequently added to~$V$.
    Hence, at most $\bigO{\left({\smax\cdot(\smax+m)}^{\left\lfloor\frac{p}{2}\right\rfloor}\right)}$ elements are in~$V$, and checking for a single~$\lambda$ whether it is in~$V$ (as well as inserting~$\lambda$ into~$V$ in Step~8 if needed) can be done in 
    \[
    \bigO{\left(\log\left({\smax\cdot(\smax+m)}^{\left\lfloor\frac{p}{2}\right\rfloor}\right)\right)}\in \bigO{\left(p\cdot\log\left(\smax+m\right)\right)}.
    \]
    Therefore, the running times of all operations involving~$V$ (checks and insertions in Steps~7 and~8) during a single iteration accumulates to at most
    \[
    \bigO{\left({(\smax+m)}^{\left\lfloor\frac{p}{2}\right\rfloor}\cdot\left(p\log\left(\smax+m\right)\right)\right)},
    \]
    which dominates the running time bound for the vertex enumeration.

    The oracle~$\oracle_\alpha$ is called at most once per vertex of~$\appri(S')$.
    Combined, we get a running time of at most
    \[
    \bigO{\left({(\smax+m)}^{\left\lfloor\frac{p}{2}\right\rfloor} \cdot \left(p\log\left(\smax+m\right) + T_\oracle\right)\right)}
    \]
    for Steps~5 to~12 in a single iteration.
    Multiplying by the number~$\smax$ of iterations yields the desired bound for the running time.
    For the remaining Steps~1 to~4 and Step~13, the same arguments as in the proof of  Theorem~\ref{theorem::algo::runningtimeExact} can be applied to show that their running time bound is dominated by the running time bound above.
    We omit repeating these arguments here.
\end{proof}

The running time of Algorithm~\ref{algo::apx} from Theorem~\ref{theorem::time::apxout} depends on~$\smax$, the maximum cardinality of~$S$ during Algorithm~\ref{algo::apx}.
The following theorem provides an upper bound for~$\smax$.

\begin{theorem}\label{theorem::time::apxpoly}
    In Algorithm~\ref{algo::apx}, $\smax$ is bounded by
    \[
        \smax\in \bigO{\left(m^{\lfloor \frac{p}{2}\rfloor\cdot\lceil \frac{p}{2}\rceil}\cdot\poly{(\langle \Pi \rangle,\varepsilon^{-1})}^{p+1} \right)}.
    \]
\end{theorem}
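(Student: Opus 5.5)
We bound $\smax$ by a volume (or grid) packing argument over the parameter polytope $\Lambda$, in the spirit of the grid-based results of~\cite{helfrich.herzel.ea2022ApproximationAlgorithmGeneral}, combined with the upper bound theorem~\cite{seidel1995UpperBoundTheorem}. The key structural fact is that every solution added to $S$ is added because it improves, by a factor strictly larger than $1+\varepsilon$, the current upper bound $z$ at a vertex $(\lambda,z)$ of $\appri(S)$. Write $\lambda$ for the parameter at which $x$ was added. Then every solution $x'$ added later satisfies, at $\lambda$, the bound
\[
F_\lambda(x')\ \geq\ F^*(\lambda)\ \geq\ F_\lambda(x)/\alpha .
\]
Meanwhile, at the parameter $\lambda'$ where $x'$ is added, we have $(1+\varepsilon)F_{\lambda'}(x')<\min_{y\in S}F_{\lambda'}(y)\le F_{\lambda'}(x)$. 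So no two added solutions can both be ``responsible'' for the same parameter region at the same value scale.

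The first step is to make this quantitative by discretising. Since $\Lambda$ is a polytope, all its vertices have polynomial encoding length. Every $x\in\orsols$ has polynomial encoding length, so by Assumption~\ref{assumption::general} the values $F_\lambda(x)$ for $\lambda$ a vertex of $\Lambda$ lie in $\{0\}\cup[2^{-\poly\langle\Pi\rangle},2^{\poly\langle\Pi\rangle}]$. I would treat the degenerate zero case separately, via a face where $F^*$ vanishes, for which exactness is forced. Each vertex of $\Lambda$ is described by $p$ of the $m$ inequalities, so there are at most $\bigO(m^{\lceil p/2\rceil})$ of them by the upper bound theorem.

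Since $F_\lambda(x)$ is affine in $\lambda$ and $\Lambda$ is the convex hull of its vertices $\mu^1,\dots,\mu^K$, each $x$ is determined, up to its parametric value function, by the vector $(F_{\mu^j}(x))_j$. I would map each added $x$ to the vector of rounded logarithms
\[
\bigl(\lfloor\log_{1+\varepsilon'}F_{\mu^j}(x)\rfloor\bigr)_{j=1}^K ,
\]
with $\varepsilon'$ polynomially related to $\varepsilon$. Each coordinate takes $\poly(\langle\Pi\rangle,\varepsilon^{-1})$ values.

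The second step reduces the dimension. The vertices of $\appri(S)$ at which solutions are added lie, by convexity, in faces of $\Lambda\times\mathbb{R}$. Using Carathéodory, the relevant $F_\lambda(x)$ is controlled by at most $p+1$ vertices of $\Lambda$, namely a simplex of a triangulation of $\Lambda$. The number of simplices in a triangulation of $\Lambda$ is $\bigO(K^{\lfloor p/2\rfloor})$, giving $m^{\lfloor p/2\rfloor\lceil p/2\rceil}$. Inside one simplex only $p+1$ rounded coordinates matter, which gives at most $\poly(\langle\Pi\rangle,\varepsilon^{-1})^{p+1}$ classes. The claim is then that two distinct added solutions charged to the same simplex cannot lie in the same class. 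If they did, the later one would agree with the earlier within factor $1+\varepsilon'$ at all $p+1$ simplex vertices, hence everywhere on the simplex by linearity. That contradicts the strict $(1+\varepsilon)$ improvement at its adding parameter, once $\varepsilon'\le\varepsilon$.

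The main obstacle is the charging step: making precise to which simplex an added solution is assigned. The adding parameter $\lambda$ is a vertex of $\appri(S)$, not of $\Lambda$, and the inequality must compare the new solution against the specific earlier solution active at $\lambda$. I would charge $x'$ to a simplex containing its adding parameter $\lambda'$. I would then compare with the earlier solution $y$ attaining $\min_{y\in S}F_{\lambda'}(y)$ whose class coincides. If the class of that minimiser is not the one coinciding, I would fall back on a pigeonhole over all earlier solutions in the same class, using that the minimum over $S$ is at most the value of any member.
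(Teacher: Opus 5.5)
This is essentially the paper's proof. Your log-rounding of the values $F_{\mu}(x)$ at the $p+1$ vertices of a simplex of a triangulation is the paper's grid of $(1+\varepsilon)$-hyperrectangles applied to $Cf(x)$. Your fallback is exactly the paper's at-most-one-per-cell lemma, so the charging step is not a real obstacle: for any earlier $x\in S$ in the same cell, $(1+\varepsilon)F_{\lambda'}(x')<z\leq F_{\lambda'}(x)$, and writing $\lambda'$ as a convex combination of the simplex vertices contradicts the cell bounds.

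Two minor corrections, neither of which changes the final bound:
\begin{itemize}
\item You swapped floor and ceiling. The upper bound theorem gives $|\vertx(\Lambda)|\in\bigO(m^{\lfloor p/2\rfloor})$, and an arbitrary triangulation has $\bigO(|\vertx(\Lambda)|^{\lceil p/2\rceil})$ simplices. Your $\bigO(K^{\lfloor p/2\rfloor})$ fails for general triangulations when $p$ is odd, although a pulling triangulation attains it.
\item Zero values need no separate face argument. An extra rounding class for $0$ suffices, and the comparison goes through verbatim.
\end{itemize}
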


Combining Theorem~\ref{theorem::time::apxout} and Theorem~\ref{theorem::time::apxpoly} ensures that, for a fixed number~$p$ of parameters, Algorithm~\ref{algo::apx} runs in time $\bigO{\left(\poly(\langle \Pi \rangle,\varepsilon^{-1},T_{\oracle})\right)}$.
Therefore, if~$\oracle_\alpha$ runs in polynomial time, so does Algorithm~\ref{algo::apx}.

Furthermore, if the oracle~$\oracle_\alpha$ is provided through a conventional (non-parametric) F(PTAS), we can pick an appropriate $\varepsilon'>0$ such that $1+\varepsilon'= \sqrt{1+\varepsilon}$ and run Algorithm~\ref{algo::apx} with~$\varepsilon'$ as input and $\oracle_{(1+\varepsilon')}$ as oracle\footnote{%
    Note that~$\varepsilon'$ might not be computable or might not have polynomial encoding length.
    In that case, some appropriate rounding must be applied.%
}.
By Theorem~\ref{theorem::algo::apxGuarantee}, this results in a $(1+\varepsilon')^2=(1+\varepsilon)$-approximation set, and by Theorems~\ref{theorem::time::apxout} and~\ref{theorem::time::apxpoly}, it has the running time properties of an (F)PTAS\@.

\medskip

In the remainder of this section, we prove Theorem~\ref{theorem::time::apxpoly}.
This is done in two steps:
In the first step, we consider a polyhedral subset~$\Lambda^*\subseteq\Lambda$.
For this subset, we show that the number of solutions that are added to~$S$ after calls to~$\oracle_{\alpha}$ with parameter vectors from~$\Lambda^*$ is in $\bigO{\left(\poly(\langle \Pi \rangle,\varepsilon^{-1})\right)}^{\left|\vertx(\Lambda^*)\right|}$.
While the choice of~$\Lambda^*=\Lambda$ then provides a first bound on~$\smax$, this bound is superpolynomial in the instance size since $\left|\vertx(\Lambda^*)\right| \in \Theta{\left(m^{\lfloor\frac{p}{2}\rfloor}\right)}$ in the worst-case~\cite{ziegler1995LecturesPolytopes}.
In the second step, we use the concept of \emph{triangulation} from computational geometry to subdividen $\Lambda$ into a polynomial number of ($p$-dimensional) \emph{simplices} (see Figure~\ref{figure::algo::triangulation}).
A ($p$-dimensional) simplex is a $p$-dimensional polyhedron with exactly $p+1$~vertices.
If~$\Lambda^*$ is a simplex, the bound from above becomes $\bigO{\left(\poly(\langle \Pi \rangle,\varepsilon^{-1})\right)}^{p+1}$.
We then only have to count the number of simplices and multiply the bound by this number.

A special class of polyhedra is needed in this proof:
For $n\in\mathbb{N}_{>0}$, a \emph{hyperrectangle} in~$\mathbb{R}^n$ is a polyhedron $\{x\in\mathbb{R}^n:l\leq x \leq u\}$ defined by two points $l,u\in\mathbb{R}^n$, where $l\leq u$.
For $l\geq 0$ and $\varepsilon>0$, a \emph{$(1+\varepsilon)$-hyperrectangle} is a special hyperrectangle where $l\in\mathbb{R}^n_{\geq0}$ and $u=(1+\varepsilon)\cdot l$.

\medskip

We begin with the first bound.
Let $\Lambda^*\subseteq\Lambda$ be a $p$-dimensional polytope with vertices $v^1,\ldots,v^k\in\vertx(\Lambda)$.
Note that this implies $\langle v^i\rangle\in \bigO{\left(\poly\langle \Pi \rangle\right)}$ for $i=1,\ldots,k$~\cite{grotschel.lovasz.ea1988RationalPolyhedra}.
For each~$x$ that is added to~$S$ at some point in Algorithm~\ref{algo::apx}, let~$\lambda(x)$ be the parameter vector such that~$x$ was returned byn $\oracle_\alpha(\lambda(x))$ in Step~9 (or in Step~3).
We define the set $S(\Lambda^*)\coloneqq \{x \in \Smax: \lambda(x)\in\Lambda^*\}$ as the set of all solutions in~$\Smax$ that were returned by oracle calls for parameter vectors from~$\Lambda^*$.

We now map our set of solutions $\orsols$ into~$\mathbb{R}_{\geq0}^k$.
For this mapping, we can use a special subdivision of~$\mathbb{R}_{\geq0}^k$ to obtain the cardinality bound.

Let the matrix~$C\in\mathbb{Q}^{k\times p+1}$ be defined as $C\coloneqq{\left(c^1,\ldots,c^{k}\right)}^\T$, with $c^i=(v^i,1)$ for $i=1,\ldots,k$.
Now, consider the set
\[
Y_C \coloneqq \left\{ C f(x)\in\mathbb{R}^k: x\in \orsols\right\}.
\]
Two important properties hold for every~$y\in Y_C$.
First, since $y_i={c^i}^\T f(x)=F_{v^i}(x)\geq 0$ for some~$x\in \orsols$ and every $i\in\{1,\ldots,k\}$, it holds that~$y\geq0$.
Second, for $\rho\coloneqq \max_{y\in Y_C}\langle y_{i=1,\ldots,k} \rangle$, it holds that $\rho\in \bigO(\poly\langle \Pi\rangle)$: $x$ and~$c^i$ have encoding length in $\bigO{\left(\poly\langle \Pi \rangle\right)}$, and~$f$ is polynomial-time computable.

For a set with these two properties, it is possible to construct a set~$\subdi$ of $(1+\varepsilon)$-hyperrectangles with cardinality $\left|\subdi\right|$ in $\bigO{\left({\left(\nicefrac{\rho}{\varepsilon}\right)}^{k}\right)}$, and every~$y\in Y_C$ is contained in at least one $(1+\varepsilon)$-hyperrectangle~$\subdielem$ of~$\subdi$.
For brevity, we omit a description of $\subdi$, see~\cite{papadimitriou.yannakakis2000ApproximabilityTradeoffsOptimal,glasser.reiwiesner2010ApproximabilityHardness,nemesch.ruzika.ea2025PolynomialTimeInnerApproximation} for a detailed construction.

\begin{lemma}\label{lemma::apx::gridarg}
    For every $\subdielem\in\subdi$, there is at most one~$x\in S(\Lambda^*)$ such that $Cf(x)\in \subdielem$.
\end{lemma}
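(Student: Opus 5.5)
The plan is a contradiction argument that compares the two solutions at the parameter vector where the later one was found. Suppose there are two distinct solutions $x,x'\in S(\Lambda^*)$ with $Cf(x),Cf(x')\in\subdielem$, where $\subdielem=\{y: l\leq y\leq (1+\varepsilon)l\}$ for some $l\in\mathbb{R}^k_{\geq0}$. Without loss of generality, $x'$ was added to~$S$ after~$x$. In particular, $x'$ is not the initial solution $x^0$ from Step~3, so $x'$ was added in Step~11.

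First, I record what the algorithm guarantees when $x'$ was added. At that moment the current set $S'$ already contained~$x$. The solution $x'$ was returned by $\oracle_\alpha(\lambda)$ for $\lambda\coloneqq\lambda(x')$, where $(\lambda,z)$ is a vertex of $\appri(S')$. The test in Step~10 succeeded, so $(1+\varepsilon)F_\lambda(x')<z$. Since $(\lambda,z)\in\appri(S')\subseteq H(x)$, the definition of $H(x)$ gives $z\leq F_\lambda(x)$. Together,
\[
(1+\varepsilon)F_\lambda(x')<F_\lambda(x).
\]

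Next, I use $\lambda\in\Lambda^*$ to pass to the vertices of $\Lambda^*$. Since $\lambda(x')\in\Lambda^*$ by definition of $S(\Lambda^*)$, write $\lambda=\sum_{i=1}^k\mu_i v^i$ with $\mu\geq0$ and $\sum_i\mu_i=1$. Because $F_\lambda(x)$ is affine in $\lambda$ and the weights sum to one, and $F_{v^i}(x)={c^i}^\T f(x)={(Cf(x))}_i$, we get
\[
F_\lambda(x)=\sum_i\mu_i{(Cf(x))}_i, \qquad F_\lambda(x')=\sum_i\mu_i{(Cf(x'))}_i .
\]
Membership of both images in the same $(1+\varepsilon)$-hyperrectangle gives ${(Cf(x))}_i\leq(1+\varepsilon)l_i\leq(1+\varepsilon){(Cf(x'))}_i$ for every~$i$. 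Multiplying by $\mu_i\geq0$ and summing yields $F_\lambda(x)\leq(1+\varepsilon)F_\lambda(x')$, which contradicts the strict inequality above.

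The main point requiring care is the ordering step. I must make sure that $x$ really belonged to the set $S'$ whose polyhedron produced the vertex $(\lambda,z)$. This holds because $S$ only grows until the optional cleanup, and $\Smax$ is taken before Step~13. The affine identity only needs $\sum_i\mu_i=1$; it does not need the $v^i$ to be vertices of~$\Lambda$. Nonnegativity of the $\mu_i$ is what lets the coordinatewise hyperrectangle bounds transfer to $F_\lambda$.
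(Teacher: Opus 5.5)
Your proof is correct and follows the paper's argument. You order the two solutions by insertion time, write $\lambda(x')$ as a convex combination of the vertices of $\Lambda^*$ so that $F_{\lambda(x')}$ becomes a nonnegative weighting of the coordinates of $Cf(\cdot)$, and contradict the Step-10 test using the hyperrectangle bounds. Two details are handled slightly better than in the paper: you justify $z\le F_\lambda(x)$ explicitly via $(\lambda,z)\in H(x)$, which the paper uses without comment, and you chain the coordinatewise bounds directly rather than passing through the corner $\subdivert$, which is only a cosmetic difference.
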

\begin{proof}
    Assume for the sake of contradiction that there are two solutions~$x,x^*\in S(\Lambda^*)$ with $C f(x)\in \subdielem$, $C f(x^*)\in \subdielem$, and $x\neq x^*$.

    By construction, we have $\lambda(x^*)\in\Lambda^*$.
    Then, there exists $\ell\in\mathbb{R}^k_{\geq0}$ with $\|\ell\|_1=1$ such that $\sum_{i=1}^k \ell_i v^i=\lambda(x^*)$.
    Thus, for  $F_{\lambda(x^*)}(x)$ we have
    \begin{alignat*}{1}
        \ell^\T C f(x) & = \sum_{i=1}^k \ell_i {c^i}^\T f(x) = \sum_{i=1}^k \ell_i \left( \sum_{j=1}^p v^i_j f_j(x) +f_{p+1}(x) \right) \\
        & =  \sum_{j=1}^p \left( \sum_{i=1}^k \ell_i   v^i_j \right) f_j(x) +f_{p+1}(x) = \sum_{j=1}^p {\lambda_i(x^*)} f_j(x) +f_{p+1}(x) \\
        & = F_{\lambda(x^*)}(x)
    \end{alignat*}
    and, similarly, $\ell^\T Cf(x^*)=F_{\lambda(x^*)}(x^*)$.

    Let $\subdivert\in\mathbb{R}^{k}_{\geq0}$ be the vertex defining the $(1+\varepsilon)$-hyperrectangle $\subdielem$, i.e., $\subdielem=\{z\in\mathbb{R}^{k}: \subdivert\leq z \leq (1+\varepsilon)\subdivert\}$.
    By our assumption, both $Cf(x)$ and $Cf(x^*)$ are in~$\subdielem$, which implies that $\subdivert\leq Cf(x)$ and $\subdivert\leq Cf(x^*)$.
    Since $Cf(x)$, $Cf(x^*)$, $g$, and $\ell$ have only non-negative entries, this yields $\ell^\T \subdivert \leq \ell^\T Cf(x)$ and $\ell^\T \subdivert \leq \ell^\T Cf(x^*)$.
    Similarly, we obtain $\ell^\T Cf(x) \leq (1+\varepsilon)\ell^\T \subdivert$ and $\ell^\T Cf(x^*) \leq (1+\varepsilon)\ell^\T \subdivert$.

    Without loss of generality, assume that~$x$ was already in~$S$ when~$x^*$ was added to~$S$ in Step~11 of Algorithm~\ref{algo::apx}.
    For~$x^*$ to be added to $S$, it must have held in Step~10 that $(1+\varepsilon)\cdot F_{\lambda(x^*)}(x^*)< F_{\lambda(x^*)}(x)$, which can also be written as $(1+\varepsilon)\cdot\ell^\T Cf(x^*) < \ell^\T Cf(x)$. 
    But then, $(1+\varepsilon)\cdot\ell^\T Cf(x^*) < \ell^\T Cf(x) \leq (1+\varepsilon)\ell^\T \subdivert$ follows.
    Dividing by $(1+\varepsilon)$ yields $\ell^\T Cf(x^*) < \ell^\T \subdivert$, which contradicts  $\ell^\T \subdivert \leq \ell^\T Cf(x^*)$.
\end{proof}

\begin{corollary}\label{corollary::apx::vertexbound}
    The cardinality of $S(\Lambda^*)$ is at most $\left|\subdi\right|\in\bigO{\left({\left(\nicefrac{\rho}{\varepsilon}\right)}^{k}\right)} = \bigO{\left({\left(\nicefrac{\rho}{\varepsilon}\right)}^{\left|\vertx(\Lambda^*)\right|}\right)}$.
\end{corollary}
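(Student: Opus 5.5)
The corollary is essentially a pigeonhole argument on top of Lemma~\ref{lemma::apx::gridarg}. I would define the map $\phi\colon S(\Lambda^*)\to\subdi$ as follows: for each $x\in S(\Lambda^*)$, pick some $\subdielem\in\subdi$ with $Cf(x)\in\subdielem$. Such a $\subdielem$ exists because $x\in\Smax\subseteq\orsols$ (every solution returned by $\oracle_\alpha$ lies in $\orsols$), hence $Cf(x)\in Y_C$. Every point of $Y_C$ is covered by at least one hyperrectangle of $\subdi$ by the construction cited from~\cite{papadimitriou.yannakakis2000ApproximabilityTradeoffsOptimal,nemesch.ruzika.ea2025PolynomialTimeInnerApproximation}.

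Next I show that $\phi$ is injective. Suppose $\phi(x)=\phi(x^*)=\subdielem$ for $x\neq x^*$. Then both $Cf(x)$ and $Cf(x^*)$ lie in $\subdielem$, which contradicts Lemma~\ref{lemma::apx::gridarg}. It follows that $\left|S(\Lambda^*)\right|\leq\left|\subdi\right|$.

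The cardinality bound $\left|\subdi\right|\in\bigO{\left({\left(\nicefrac{\rho}{\varepsilon}\right)}^{k}\right)}$ is the one stated for the grid construction. Since $k$ was defined as the number of vertices $v^1,\ldots,v^k$ of $\Lambda^*$, we have $k=\left|\vertx(\Lambda^*)\right|$, which gives the final equality.

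The only point requiring any care is that Lemma~\ref{lemma::apx::gridarg} was derived using $\lambda(x^*)\in\Lambda^*$ and the ordering of insertion into $S$. Both hypotheses hold for any pair of distinct elements of $S(\Lambda^*)$: membership in $S(\Lambda^*)$ means $\lambda(\cdot)\in\Lambda^*$ by definition, and one of the two was necessarily added after the other. So the lemma applies without modification, and no real obstacle remains.
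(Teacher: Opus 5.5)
Your proposal is correct and follows the paper's own reasoning. The paper states the corollary without a separate proof, as an immediate pigeonhole consequence of Lemma~\ref{lemma::apx::gridarg}: every $Cf(x)$ with $x\in S(\Lambda^*)\subseteq\orsols$ lies in $Y_C$ and is therefore covered by some $\subdielem\in\subdi$, and $k=\left|\vertx(\Lambda^*)\right|$ by definition. Your injection $x\mapsto\subdielem$ just makes this explicit.
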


Corollary~\ref{corollary::apx::vertexbound} provides a bound on the cardinality of~$S$ when choosing $\Lambda^*=\Lambda$.
But as $\left|\vertx(\Lambda)\right|\in \Theta(m^{\lfloor\frac{p}{2}\rfloor})$ in the worst case~\cite{ziegler1995LecturesPolytopes}, this bound can be exponential in $m$ and, thus, does not ensure a polynomial running time in Theorem~\ref{theorem::time::apxout}.

For the second bound, $\Lambda$ is subdivided into ($p$-dimensional) simplices.
Since every simplex has exactly $p+1$ vertices, the exponent in the bound from Corollary~\ref{corollary::apx::vertexbound} becomes a constant for each simplex in the subdivision.

A \emph{triangulation} of $\Lambda$ is a subdivision of $\Lambda$ into a finite number of simplices $\Lambda^1,\ldots,\Lambda^t$ such that, among other properties, $\cup_{i=1}^t \Lambda^i=\Lambda$ and $\vertx{\left(\Lambda^i\right)}\subseteq \vertx{\left(\Lambda\right)}$ for each $i\in\{1,\ldots,t\}$ (see~\cite{deloera.rambau.ea2010TriangulationsStructuresAlgorithms,lee.santos2017SubdivisionsTriangulations} for a more comprehensive introduction into triangulations).
An example is given in Figure~\ref{figure::algo::triangulation}.
From $\cup_{i=1}^t \Lambda^i=\Lambda$, it follows that $\cup_{i=1}^t S(\Lambda^i)=S$ and, thus, $\sum_{i=1}^t \left|S(\Lambda^i)\right|\geq \left|S\right|$.
At the same time, we can apply Corollary~\ref{corollary::apx::vertexbound} to each simplex~$\Lambda^i$ for $ i=1,\ldots,t$ to bound $\left|S(\Lambda^i)\right| \in \bigO{\left({\left(\nicefrac{\poly\langle\Pi\rangle}{\varepsilon}\right)}^{p+1}\right)}$.
Thus, $\left|S\right|\in \bigO{\left(t\cdot{\left(\nicefrac{\poly\langle\Pi\rangle}{\varepsilon}\right)}^{p+1}\right)}$.

It now only remains to bound the number~$t$ of simplices.
A known result for triangulations is that $t\in \bigO{\left(\left|\vertx (\Lambda)\right|^{\lceil\frac{p}{2}\rceil}\right)}$~\cite{lee.santos2017SubdivisionsTriangulations}.
Since also $\left|\vertx (\Lambda)\right|\in \bigO{\left(m^{\lfloor\frac{p}{2}\rfloor}\right)}$ holds~\cite{seidel1995UpperBoundTheorem}, we get the bound
$t\in \bigO{\left(m^{\lfloor\frac{p}{2}\rfloor\lceil\frac{p}{2}\rceil}\right)}$, which leads to the cardinality bound stated in Theorem~\ref{theorem::time::apxpoly}.

\begin{remark}
    Our proof of Theorem~\ref{theorem::time::apxpoly} induces a strategy that, in theory, can be used to generalize previously existing parametric approximation algorithms with their stricter assumptions (see Table~\ref{sota::table::results}).
    The idea is to compute the triangulation $\Lambda^1,\ldots,\Lambda^t$ explicitly,\footnote{%
    Computing an arbitrary triangulation for a given polytope can be done in polynomial time~\cite{lee.santos2017SubdivisionsTriangulations}, but obtaining a triangulation of minimum cardinality is NP-hard~\cite{below.loera.ea2000FindingMinimalTriangulations}.}
    construct matrices $C^1,\ldots, C^t$ for $\Lambda^1,\ldots,\Lambda^t$ analogously to the matrix~$C$ for~$\Lambda^*$, and then, for~$i=1,\ldots,t$, approximate the parametric optimization problem instance
    \[
        {\min \left\{\lambda^\T C^i f(x) : x\in X\right\}}_{\lambda\in\mathbb{R}_{\geq0}^{p+1}}.
    \]
    Without proof, we remark that all of these parametric optimization problem instances satisfy the stricter assumptions, and that the union of their $(1+\varepsilon)$-approximation sets is a $(1+\varepsilon)$-approximation set for the original problem instance~$\Pi$.
    However, in practice, such a procedure presumably runs slower than Algorithm~\ref{algo::apx}.
\end{remark}

\begin{figure}
    \centering
    \begin{tikzpicture}

    \def\cx{2.5}\def\cy{2.5}\def\r{2.5}
    \draw[draw=black!40, fill=black!7, very thick]
        ($(\cx,\cy)+(0:\r)$)
        \foreach \a in {45,90,135,180,225,270,315} { -- ($(\cx,\cy)+(\a:\r)$) }
        -- cycle;

    \node[yshift=5pt] at (\cx,\cy) {\large$\Lambda$};

    \def\cx{9}\def\cy{2.5}\def\r{2.5}

    \foreach \a/\i in {90/1,45/2,0/3,315/4,270/5,225/6,180/7,135/8} {
    \path coordinate (v\i) at ($(\cx,\cy)+(\a:\r)$);
    }

    \draw[draw=black!40, fill=black!7, very thick] (v1)--(v2)--(v3)--(v4)--(v5)--(v6)--(v7)--(v8)--cycle;

    \foreach \i in {3,4,5,6,7} {
    \draw[draw=black!40, very thick] (v1)--(v\i);
    }

    \foreach \a/\b/\c/\i in {1/2/3/6, 1/3/4/5, 1/4/5/4, 1/5/6/3,1/6/7/2, 1/7/8/1}
    {
    \coordinate (tmp\i) at ($(v\a)+(v\b)+(v\c)$);
    \node[yshift=5pt] at ($(0,0)!0.3333!(tmp\i)$) {$\Lambda^{\i}$};
    }
\end{tikzpicture}
    \caption{
        An example for a 2-dimensional parameter set $\Lambda$ and a possible triangulation.%
        \label{figure::algo::triangulation}
    }
\end{figure}

\section{Homogenization of the Parameter Set}\label{sec::homogenization}

Algorithm~\ref{algo::apx} can only be applied if the parameter set~$\Lambda$ is a polytope.
However, this is not necessarily the case for every instance of a parametric optimization problem.
In this section, we describe how we can transform every instance of a parametric optimization problem~$\Pi$ into a new instance~$\Pi^P$ of the same parametric problem, but with a polytope as parameter set.
Every $(1+\varepsilon)$-approximation set (or optimal solution set) for this new instance~$\Pi^P$ is a $(1+\varepsilon)$-approximation set (or optimal solution set) for the original instance~$\Pi$.

This section consists of two parts. We provide an overview before going into more detail.
In the first part, we construct an intermediary problem instance~$\Pi^\coneHull$ where the parameter set~$\coneHull$ is a cone, the so-called \emph{homogenization}~\cite{kaibel2011BasicPolyhedralTheory} of~$\Lambda$.
Furthermore, we show that, for every~$\varepsilon\geq0$ and~$\alpha\geq0$,
\begin{itemize}
    \item every $(1+\varepsilon)$-approximation set for~$\Pi^\coneHull$ is a $(1+\varepsilon)$-approximation set for~$\Pi$ (and vice versa), and
    \item $\oracle_\alpha$ can be used as an $\alpha$-approximation oracle for~$\Pi^\coneHull(w)$ for every~$w\in\coneHull$.
\end{itemize}

In the second part, we construct~$\Pi^P$ from~$\Pi^\coneHull$ by limiting the (conic) parameter set~$\coneHull$ to a subset~$P$ that is a polytope.
Meanwhile, the two aforementioned properties regarding approximation sets and the oracle~$\oracle_\alpha$ are conserved for~$\Pi^P$.

\medskip

We start the first part with an introduction into \emph{homogenizations}:
The homogenization~$\coneHull$ of a parameter set $\Lambda=\left\{\lambda\in\mathbb{R}^p: Q\lambda\geq d\right\}$ is the polyhedron
\[
    \coneHull \coloneqq\left\{(\lambda,\xi)\in\mathbb{R}^{p+1}: Q\lambda\geq \xi d,\;\xi\geq0\right\}.
\]

We now state some important properties of~$\coneHull$ --- for a more comprehensive introduction to homogenizations of polyhedra, we refer to~\cite{kaibel2011BasicPolyhedralTheory}.
A geometric interpretation is shown in Figure~\ref{figure::homogenizationIntro}.
The set~$\coneHull$ is a (polyhedral) cone and, since we assume~$\Lambda$ to be $p$-dimensional, $\coneHull$ is $(p+1)$-dimensional.
As notational shorthand, we define the function $\lwfunc\colon \mathbb{R}^{p+1}\to\mathbb{R}^{p}$, $\lwfunc(w)=\lwfunc(w_1,\ldots,w_{p+1})\coloneq(w_1,\ldots,w_p)$.
For the homogenization~$\coneHull$, there exist generators $V=\{v^1,\ldots,v^s\}\subseteq\mathbb{Q}^{p+1}$ and $R=\{r^1,\ldots,r^t\}\subseteq\mathbb{Q}^{p+1}$ such that $v^i_{p+1}=1$ for $i=1,\ldots,s$ and $r^j_{p+1}=0$ for $j=1,\ldots,t$, and such that
\[
    \coneHull = \cone\mleft(V\cup R\mright)
\]
    and
\[
    \Lambda = \conv\mleft(\lwfunc(V)\mright) + \cone\mleft(\lwfunc(R)\mright),
\]
where~$\lwfunc(V)$ and~$\lwfunc(R)$ are exactly the generators of~$\Lambda$~\cite{kaibel2011BasicPolyhedralTheory}.
As a consequence, $\coneHull$ has the same number of generators as~$\Lambda$.
Therefore, $s+t\in \bigO(m^{\left\lfloor\frac{p}{2}\right\rfloor})$ (see~\cite{seidel1995UpperBoundTheorem}).
Furthermore, the encoding length of each element of~$V$ and~$R$ can be bounded in the maximum encoding length of the generators of~$\Lambda$, which is in $\bigO(\poly\langle \Lambda\rangle)\in \bigO(\poly\langle \Pi \rangle)$ (see~\cite{grotschel.lovasz.ea1988RationalPolyhedra}).

\begin{figure}
    \begin{minipage}[b]{0.475\textwidth}
        \centering
        \begin{tikzpicture}[%
  scale=0.6,
  oglambda/.style={draw=black!50, color=black!50, ultra thick, {-Latex}},
  Wbound/.style={draw=black!20, ultra thick, {Latex-Latex}},
  Winner/.style={draw=none, fill=black!8},
  Wnode/.style={color=black!35},
  tick/.style={thin},
]

  \draw[tick] (3,0) to (3,-0.25);
  \node[yshift=-10pt] at (3,0) {$\lambda_{\min}$};
  \draw[tick,opacity=0] (0,2) to (-0.25,2);
  \node[xshift=-10pt,opacity=0] at (0,2) {$1$};
  \draw[tick] (0,0) to (0,-0.25);
  \node[yshift=-10pt] at (0,0) {$0$};

  \draw[oglambda] (3,2) to node[pos=0.45, yshift=9pt] {\large$\Lambda$} (10,2);
  \filldraw [oglambda] (3,2) circle (2pt);

  \draw[very thick,->] (-0.5,0) to node[pos=0.55, below, yshift=-8pt] {\large$\lambda$} (10.0, 0);
  \draw[very thick,->,opacity=0] (0,-0.5)to node[pos=0.6, left, xshift=-8pt] {\large$\xi$} (0, 7.0);


\end{tikzpicture}
    \end{minipage}
    \hfill
    \begin{minipage}[b]{0.475\textwidth}
        \centering
        \begin{tikzpicture}[%
  scale=0.6,
  oglambda/.style={draw=black!50, color=black!50, ultra thick, {-Latex}},
  Wbound/.style={draw=black!20, ultra thick, {Latex-Latex}},
  Winner/.style={draw=none, fill=black!8},
  Wnode/.style={color=black!35},
  tick/.style={thin},
]

  \draw[tick] (3,0) to (3,-0.25);
  \node[yshift=-10pt] at (3,0) {$\lambda_{\min}$};
  \draw[tick] (0,2) to (-0.25,2);
  \node[xshift=-10pt] at (0,2) {$1$};

  \draw[Winner] (10,0) to (0,0) to (10,6.66666) to cycle;
  \node[Wnode] at (8.1,4.1) {\large$\coneHull$};

  \draw[oglambda] (3,2) to node[pos=0.475, yshift=9pt] {\large$\Lambda \times\{1\}$} (10,2);
  \draw[Wbound] (10,0) to (0,0) to (10,6.66666);

  \draw[draw=none] (-0.5,0) to node[pos=0.55, below, yshift=-8pt] {\large$\lambda$} (10, 0);
  \draw[very thick] (-0.5,0) to (0, 0);

  \draw[very thick,->] (0,-0.5)to node[pos=0.95, left, xshift=-8pt] {\large$\xi$} (0, 7.0);


\end{tikzpicture}
    \end{minipage}
    \caption{%
    Geometric interpretation of the homogenization~$\coneHull$ of the parameter set $\Lambda=\{\lambda \in \mathbb{R} : \lambda\geq\lambda_{\min}\}$.
    The set~$\Lambda$ is a $1$-dimensional polyhedron in~$\mathbb{R}$, and~$\coneHull$ is a $2$-dimensional cone in~$\mathbb{R}^2$.
    The intersection of~$\coneHull$ and the ``$\xi\!=\!1$''-plane is exactly the set $\Lambda\times\{1\}=\left\{(\lambda,1) : \lambda \in \Lambda\right\}$.
    For each point $(\lambda,1)\in \Lambda\times\{1\}$, $\coneHull$ contains all points $a\cdot(\lambda,1)$ with~$a\geq0$.
    In addition, $\coneHull$ contains the closure of the set of all these points.
    The sets~$V$ and~$R$ in this example $V=\{(\lambda_{\min},1)\}$ and $R=\{(1,0)\}$.%
    \label{figure::homogenizationIntro}}
\end{figure}

\begin{definition}
    For a parametric optimization problem instance $\Pi=(\Lambda,X,f)$,
    we call the parametric optimization problem instance
    \[
    \Pi^\coneHull \coloneq {\left\{\min_{x\in X} w^\T f(x) \right\}}_{w\in\coneHull}
    \]
    the \emph{parametric homogenization of~$\Pi$}.
\end{definition}

The parametric homogenization of~$\Pi$ can be seen as an extension of~$\Pi$.
For each~$\lambda\in\Lambda$, the point~$w=(\lambda,1)$ is in~$\coneHull$.
Since then $F_\lambda(x)=w^\T f(x)$, the (non-parametric) problem instances~$\Pi(\lambda)$ and~$\Pi^\coneHull(w)$ are equivalent for this choice of~$\lambda$ and~$w$.
In contrast, not every $w\in\coneHull$ has a matching element in $\Lambda$.
One can, however, show that both problems still share the same optimal solution sets and approximation sets.

\begin{theorem}\label{theorem::conehull::polysol}
    Let~$\varepsilon\geq0$ and let~$S\subseteq\orsols$ be a $(1+\varepsilon)$-approximation set for~$\Pi$.
    Then, for each~$w\in\coneHull$, the problem~$\Pi^\coneHull(w)$ is bounded and $(1+\varepsilon)$-approximated by at least one solution~$x$ from~$S$.
\end{theorem}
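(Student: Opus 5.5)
We split $w=(\lambda,\xi)\in\coneHull$ into the cases $\xi>0$ and $\xi=0$. In both cases the target inequality is the same: we need some $x\in S$ with $w^\T f(x)\leq(1+\varepsilon)\min_{x'\in X}w^\T f(x')$. In both cases we also need to show that $\Pi^\coneHull(w)$ is bounded, i.e.\ $\inf_{x'\in X}w^\T f(x')>-\infty$. The second case is the main obstacle.

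\textbf{Case $\xi>0$.} Here $\lambda/\xi\in\Lambda$, since $Q(\lambda/\xi)\geq d$. By construction,
\[
w^\T f(x)=\xi\,F_{\lambda/\xi}(x)\quad\text{for every }x\in X.
\]
Boundedness follows because $\Pi(\lambda/\xi)$ is bounded by Assumption~\ref{assumption::general}. Scaling by the positive factor $\xi$ preserves minimizers, so the $x\in S$ that $(1+\varepsilon)$-approximates $\Pi(\lambda/\xi)$ also satisfies
\[
w^\T f(x)=\xi F_{\lambda/\xi}(x)\leq(1+\varepsilon)\,\xi F^*(\lambda/\xi)=(1+\varepsilon)\min_{x'\in X} w^\T f(x').
\]

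\textbf{Case $\xi=0$.} Now $Q\lambda\geq 0$, so $\lambda$ is a recession direction of $\Lambda$. The plan is a limiting argument. Fix any $\bar\lambda\in\Lambda$ and set $w_t\coloneq(\bar\lambda+t\lambda,1)$ for $t>0$. Then $\bar\lambda+t\lambda\in\Lambda$, and
\[
\tfrac1t\, w_t^\T f(x)=w^\T f(x)+\tfrac1t F_{\bar\lambda}(x)\;\longrightarrow\; w^\T f(x)\qquad(t\to\infty).
\]
The difficulty is that $X$ may be infinite, so limits and infima cannot be swapped naively. To avoid this, we will work with the finite set $\orsols$.

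\emph{Reduction to $\orsols$.} By Assumption~\ref{assumption::general}, every $\Pi(\lambda')$ has an optimal solution in $\orsols$. Hence $F_{\lambda'}(x')\geq\min_{y\in\orsols}F_{\lambda'}(y)$ for all $x'\in X$ and $\lambda'\in\Lambda$. Applying this at $\lambda'=\bar\lambda+t\lambda$, dividing by $t$ and letting $t\to\infty$ gives
\[
w^\T f(x')\;\geq\;\min_{y\in\orsols} w^\T f(y)\quad\text{for every }x'\in X.
\]
This limit is legitimate because $\orsols$ is finite, so the minimum of finitely many affine functions of $1/t$ passes to the limit. In particular $\Pi^\coneHull(w)$ is bounded, and its minimum is attained in $\orsols$.

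\emph{Approximation by $S$.} For every $t$, the approximation property of $S$ gives some $x_t\in S$ with
\[
F_{\bar\lambda+t\lambda}(x_t)\leq(1+\varepsilon)\min_{y\in\orsols}F_{\bar\lambda+t\lambda}(y).
\]
Since $S\subseteq\orsols$ is finite, some fixed $x\in S$ equals $x_t$ for arbitrarily large $t$. Dividing that inequality by $t$ along this subsequence and taking the limit, using finiteness of $\orsols$ again on the right-hand side, yields
\[
w^\T f(x)\leq(1+\varepsilon)\min_{y\in\orsols}w^\T f(y)=(1+\varepsilon)\min_{x'\in X}w^\T f(x').
\]
This settles the case $\xi=0$.

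The step I expect to require the most care is this $\xi=0$ case: the exchange of limits with minima, and the pigeonhole choice of a single $x\in S$. Both rest on the finiteness of $\orsols$, and hence of $S\subseteq\orsols$, which is exactly why the theorem assumes $S\subseteq\orsols$. Note also that no sign condition on $w^\T f$ is needed, since dividing by $t>0$ preserves all inequalities.
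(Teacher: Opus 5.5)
Your proof is correct and follows the paper's argument. Your split $\xi>0$ versus $\xi=0$ is exactly the paper's split $\ell\neq 0$ versus $\ell=0$ in the generator representation, and the first case is the same rescaling by $1/\xi=1/\|\ell\|_1$. Likewise, your $\tfrac1t w_t=\tfrac1t(\bar\lambda,1)+w$ is the paper's $\hat{w}(q)=\tfrac1q v+w$, with an arbitrary point of $\Lambda$ in place of a generator $v$. Your use of the finiteness of $\orsols$ to show that $\Pi^\coneHull(w)$ is bounded and attains its minimum when $\xi=0$ is, if anything, more careful than the paper, which presupposes an optimal solution of $\Pi^\coneHull(w)$ in its contradiction step.
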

\begin{proof}
    Each~$w\in\coneHull$ can be written as
    \[
        w=\sum_{i=1}^s\ell_i v^i + \sum_{j=1}^t k_j r^j
    \]
    for some $\ell\in\mathbb{R}_{\geq0}^s$, $k\in\mathbb{R}_{\geq0}^t$.
    We prove the statement of Theorem~\ref{theorem::conehull::polysol} separately for the two cases: a) $\ell\neq0$ and b) $\ell=0$.

    \medskip

    Case~a); $\ell\neq0$:
    We scale both vectors~$\ell$ and~$k$ by~$\nicefrac{1}{\|\ell\|_1}$ and obtain $\frac{1}{\|\ell\|_1}\sum_{i=1}^s\ell_i v^i\in\conv(V)$ and  $\frac{1}{\|\ell\|_1}\sum_{j=1}^t k_j r^j\in\cone(R)$.
    Let~$\hat{w}$ be the scaled vector~$\frac{w}{\|\ell\|_1}$.
    Since $\hat{w}\in \conv(V) + \cone(R)$, we have $\lwfunc(\hat{w})\in \conv(\lwfunc(V)) + \cone(\lwfunc(R))$ and, thus, $\lwfunc(\hat{w})\in\Lambda$.
    Also, $\hat{w}_{p+1}=\frac{1}{\|\ell\|_1}\sum_{i=1}^s \ell_i v^i_{p+1}=\frac{1}{\|\ell\|_1}\sum_{i=1}^s(\ell_i\cdot 1)=1$.
    Both the problems~$\Pi{\left(\lwfunc(\hat{w})\right)}$ and~$\Pi^\coneHull(\hat{w})$ can be written as $\min_{x\in X}\hat{w}^\T f(x)$ and are, therefore, equivalent.
    Let the solution~$x$ be a $(1+\varepsilon)$-approximation for~$\Pi{\left(\lwfunc(\hat{w})\right)}$, then it is also a $(1+\varepsilon)$-approximation for~$\Pi^\coneHull(\hat{w})$.
    Scaling~$\hat{w}$ by a positive factor only affects the optimal objective function value, but not which solutions are $(1+\varepsilon)$-approximations for~$\Pi^\coneHull(\hat{w})$.
    Consequently, $x$ is also a $(1+\varepsilon)$-approximation for~$\Pi^\coneHull(w)$.
    Therefore, $\Pi^\coneHull(w)$ is bounded and~$x$ is an $(1+\varepsilon)$-approximation.

    \medskip

    Case~b); $\ell=0$:
    Here, let~$v$ be an arbitrary element of~$V$.
    For each~$q>0$, we define
    \[
        w(q)\coloneqq v+q\cdot w.
    \]
    By construction, for every~$q\geq0$, $\lwfunc(w(q))\in\Lambda$ holds, $\Pi^\coneHull(w(q))$ is bounded, and~$\Pi^\coneHull(w(q))$ has a $(1+\varepsilon)$-approximate solution in $S$ (see Case~a) discussed above).

    Let $\hat{w}(q)\coloneqq\nicefrac{1}{q}\cdot w(q)$.
    For every~$q>0$, the problem~$\Pi^\coneHull(\hat{w}(q))$ is the problem~$\Pi^\coneHull(w(q))$ with the objective function scaled by the positive factor~$\nicefrac{1}{q}>0$.
    Therefore, $\Pi^\coneHull(\hat{w}(q))$ is also $(1+\varepsilon)$-approximated by a solution in $S$.
    With increasing~$q$, the vector~$\hat{w}(q)= \nicefrac{1}{q}\cdot v + w$ converges to the limit~$w$ and, for each~$x\in X$, ${\hat{w}(q)}^\T f(x)=\nicefrac{1}{q}\cdot v^\T f(x) + w^\T f(x)$ then converges to the limit $w^\T f(x)$.
    Since~$S$ is finite, there must be a~$q^*\geq0$ and a solution~$x^*\in S$ such that $x^*\in\arg\min_{x\in S}\hat{w}(q)^\T f(x)$ for all $q\geq q^*$.
    Then, $x^*$ also is a $(1+\varepsilon)$-approximation for every problem $\Pi^\coneHull(\hat{w}(q))$ with $q\geq q^*$.

    Now, for the sake of contradiction, assume that there is an optimal solution~$x'\in X$ of~$\Pi^\coneHull(w)$ such that $(1+\varepsilon)w^\T f(x')<w^\T f(x^*)$.
    But then
    \[
        \lim_{q\rightarrow\infty} (1+\varepsilon){\hat{w}(q)}^\T f(x')
        = (1+\varepsilon)w^\T f(x')
        < (1+\varepsilon)w^\T f(x^*)
        = \lim_{q\rightarrow\infty} (1+\varepsilon){\hat{w}(q)}^\T f(x^*),
    \]
    which is a contradiction to~$x^*$ being $(1+\varepsilon)$-approximate for every problem~$\Pi^\coneHull(\hat{w}(q))$ with~$q\geq q^*$.
    Therefore, $x^*$ must be a $(1+\varepsilon)$-approximate solution of~$\Pi^\coneHull(w)$.
\end{proof}

By setting $\varepsilon=0$, we can also infer the following property regarding optimal solutions in the approximation setting from the proof of Theorem~\ref{theorem::conehull::polysol}:
\begin{corollary}\label{corollary::conehull::positive}
    If $\min_{x\in X}F_\lambda(x)\geq0$ for every $\lambda\in\Lambda$, then $\min_{x\in X}w^\T f(x)\geq0$ for every $w\in\coneHull$.
\end{corollary}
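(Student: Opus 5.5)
Corollary~\ref{corollary::conehull::positive} follows by rerunning the proof of Theorem~\ref{theorem::conehull::polysol} with $\varepsilon=0$, tracking the sign of the optimal value instead of the approximation property. I fix $w\in\coneHull$ and write it, as there, as $w=\sum_{i=1}^s\ell_i v^i+\sum_{j=1}^t k_j r^j$ with $\ell\in\mathbb{R}^s_{\geq0}$, $k\in\mathbb{R}^t_{\geq0}$, and treat the two cases separately.

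\emph{Case $\ell\neq0$.} Set $\hat w=w/\|\ell\|_1$. As shown in Case~a) of that proof, $\hat w_{p+1}=1$ and $\lwfunc(\hat w)\in\Lambda$. Hence $\hat w^\T f(x)=F_{\lwfunc(\hat w)}(x)$ for every $x\in X$, and therefore
\[
\min_{x\in X}w^\T f(x)=\|\ell\|_1\cdot\min_{x\in X}F_{\lwfunc(\hat w)}(x)\geq0
\]
by the hypothesis. The minimum is attained by Assumption~\ref{assumption::general}(a), so there is no issue with existence.

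\emph{Case $\ell=0$.} This is the main obstacle, because $w$ has no counterpart in $\Lambda$ and the argument must go through a limit. Fix $v\in V$ and put $\hat w(q)=\nicefrac{1}{q}\cdot v+w$ for $q>0$. Then $q\hat w(q)=v+qw$ falls under the first case, so $\hat w(q)^\T f(x)\geq\min_{x'\in X}\hat w(q)^\T f(x')\geq0$ for every $x\in X$ and every $q>0$.

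Now take an optimal solution $x'$ of $\Pi^\coneHull(w)$. Such a solution exists by Theorem~\ref{theorem::conehull::polysol} applied with $\varepsilon=0$ and $S=\orsols$, which is an optimal solution set for $\Pi$; that theorem yields boundedness and an optimal solution in $\orsols$. Letting $q\to\infty$ gives
\[
w^\T f(x')=\lim_{q\to\infty}\hat w(q)^\T f(x')\geq0,
\]
since each term of the sequence is nonnegative.

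The only subtlety is ensuring that the minimum over $X$ is attained, rather than being an infimum approached by a sequence of solutions. Theorem~\ref{theorem::conehull::polysol}, via the finite set $\orsols$, supplies exactly this, after which the limit argument is immediate.
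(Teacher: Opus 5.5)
Your proof is correct and follows the paper's route. The paper obtains this corollary by rerunning the proof of Theorem~\ref{theorem::conehull::polysol} with $\varepsilon=0$, using the same split into $\ell\neq0$ (rescale to a point of $\Lambda\times\{1\}$) and $\ell=0$ (take the limit along $\hat w(q)=\nicefrac{1}{q}\cdot v+w$). As a minor remark, your limit argument already gives $w^\T f(x)\geq0$ for every $x\in X$, so the appeal to Theorem~\ref{theorem::conehull::polysol} is only needed to justify writing $\min$ rather than $\inf$.
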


Theorem~\ref{theorem::conehull::polysol} ensures that, if a $(1+\varepsilon)$ approximation set for~$\Pi$ exists, then there also exists a $(1+\varepsilon)$-approximation set for~$\Pi^{\coneHull}$.
The converse is also true:
For each~$\lambda$ in $\Lambda$, the problems~$\Pi(\lambda)$ and~$\Pi^\coneHull(w)$ are equivalent if~$w=(\lambda,1)$, which is a vector in~$\coneHull$.
Hence, every $(1+\varepsilon)$-approximation set for~$\Pi^\coneHull$ then also is a $(1+\varepsilon)$-approximation set for~$\Pi$.

\medskip

Next, we discuss how to use the oracle~$\oracle_\alpha$ from the original problem~$\Pi$ for the parametric homogenization~$\Pi^\coneHull$.
In practice, many oracles can directly be used without any changes since the parametric value functions~$F_\lambda(x)$ of~$\Pi$ and $w^\T f(x)$ of~$\Pi^\coneHull$ are both just linear combinations of the components of~$f$.
For fixed values of~$\lambda$ or~$w$, the problems~$\Pi(\lambda)$ and~$\Pi^\coneHull(w)$ are both conventional (non-parametric) optimization problems.
Nearly all exact and approximation algorithms that are used in practice, e.g., MIP solvers or the approximation algorithms in~\cite{vazirani2001ApproximationAlgorithms}, can deal with arbitrary linear objective functions.

Even if, for some reason, the oracle~$\oracle_\alpha$ only works for inputs~$\lambda\in\Lambda$, it can be used to simulate an $\alpha$-approximation oracle for~$\Pi^\coneHull$:

\begin{proposition}\label{proposition::conehull::oracle}
    For every~$w\in\coneHull\cap \mathbb{Q}^{p+1}$, there is at least one~$\lambda\in\Lambda$ with $\langle \lambda\rangle\in \bigO(\poly\langle \Pi,w\rangle)$ such that~$\oracle_\alpha(\lambda)$ returns an $\alpha$-approximate solution of~$\Pi^\coneHull(w)$.
\end{proposition}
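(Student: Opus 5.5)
We follow the case distinction from the proof of Theorem~\ref{theorem::conehull::polysol}. Write $w=\sum_{i=1}^s\ell_i v^i+\sum_{j=1}^t k_j r^j$ with $\ell\in\mathbb{R}^s_{\geq0}$ and $k\in\mathbb{R}^t_{\geq0}$. The distinction is really about whether $w_{p+1}>0$ or $w_{p+1}=0$, since $w_{p+1}=\|\ell\|_1$. This formulation is preferable because it can be read off directly from the rational vector $w$, without computing a conic decomposition.

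\emph{Case $w_{p+1}>0$.} We take $\lambda\coloneqq\lwfunc(w)/w_{p+1}$. Since $Q\lwfunc(w)\geq w_{p+1}d$, we get $\lambda\in\Lambda$, and $(\lambda,1)=w/w_{p+1}$. Hence $w^\T f(x)=w_{p+1}F_\lambda(x)$ for all $x\in X$. Positive scaling preserves $\alpha$-approximate solutions, and by Corollary~\ref{corollary::conehull::positive} the optimal values are nonnegative, so the approximation notion is meaningful. Therefore $\oracle_\alpha(\lambda)$ returns an $\alpha$-approximate solution of $\Pi^\coneHull(w)$. The bound $\langle\lambda\rangle\in\bigO(\poly\langle w\rangle)$ is immediate because we only divide by a single rational entry.

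\emph{Case $w_{p+1}=0$.} Here $\lwfunc(w)$ lies in the recession cone of $\Lambda$. As in Case~b) of Theorem~\ref{theorem::conehull::polysol}, we fix a vertex $v\in V$, which has polynomial encoding length by~\cite{grotschel.lovasz.ea1988RationalPolyhedra}, and consider $\lambda(q)=\lwfunc(v)+q\lwfunc(w)\in\Lambda$. The objective $F_{\lambda(q)}(x)=q\bigl(w^\T f(x)+\tfrac1q v^\T f(x)\bigr)$ is a positive multiple of $\hat w(q)^\T f(x)$. The task is to choose a \emph{finite} $q$ of polynomial encoding length such that every $\alpha$-approximate solution for $\hat w(q)$ is $\alpha$-approximate for $w$.

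This step is the main obstacle. The limit argument of Theorem~\ref{theorem::conehull::polysol} only yields some unspecified $q^*$, and that is not enough here. We plan a quantitative version that exploits the restriction of the oracle to $\orsols$, a finite set whose elements have encoding length bounded by $\poly\langle\Pi\rangle$. Every value $w^\T f(x)$ and $v^\T f(x)$ for $x\in\orsols$ is then a rational number of encoding length $\poly\langle\Pi,w\rangle$. Consequently, any nonzero difference of the form $\alpha w^\T f(x')-w^\T f(x)$ is bounded below in absolute value by $2^{-\poly\langle\Pi,w\rangle}$, and every $|v^\T f(x)|$ is bounded above by $2^{\poly\langle\Pi\rangle}$. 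We therefore choose $q=2^{N}$ with $N\in\poly\langle\Pi,w\rangle$ large enough that the perturbation $\tfrac1q v^\T f(\cdot)$ cannot reverse any strict inequality among these values. This choice keeps $\langle\lambda(q)\rangle$ polynomial.

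It remains to derive the approximation guarantee. Suppose the returned $x$ satisfies $\hat w(q)^\T f(x)\leq\alpha\,\hat w(q)^\T f(x')$ for all $x'$ but $w^\T f(x)>\alpha\,w^\T f(x')$ for some $x'\in\orsols$, which we may take optimal for $w$ by Assumption~\ref{assumption::general}. The gap $w^\T f(x)-\alpha w^\T f(x')$ is then at least $2^{-\poly}$, while the perturbation changes both sides by less than half of this gap, a contradiction.

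One subtlety needs care: if the optimal value of $\Pi^\coneHull(w)$ is $0$, the $\alpha$-factor degenerates. The same separation argument still applies, since the requirement becomes $w^\T f(x)\leq 0$, and it forces $w^\T f(x)=0$ via the same gap bound together with the nonnegativity from Corollary~\ref{corollary::conehull::positive}.
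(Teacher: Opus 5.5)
Your proof is correct and follows the paper's sketched route: the same two-case split (your $w_{p+1}>0$ versus $w_{p+1}=0$ is exactly $\ell\neq0$ versus $\ell=0$, since $w_{p+1}=\|\ell\|_1$), direct rescaling in the first case, and the perturbation $\hat w(q)\to w$ in the second, where you supply the explicit choice $q=2^{N}$ whose existence with polynomial encoding length the paper only asserts. Two small precision points: for $w_{p+1}=0$, the optimal solution of $\Pi^\coneHull(w)$ in $\orsols$ comes from Theorem~\ref{theorem::conehull::polysol} with $\varepsilon=0$, not directly from Assumption~\ref{assumption::general}, which only covers $\lambda\in\Lambda$; and your $2^{-\poly}$ lower bound on the gap $w^\T f(x)-\alpha w^\T f(x')$ tacitly assumes that $\alpha$ is a rational constant.
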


Since the formal proof is rather technical and, most likely, of purely theoretical value, we only sketch the necessary steps here.
As in the proof of Theorem~\ref{theorem::conehull::polysol}, the two cases~a) and~b) need to be considered:
In Case~a), $w$ can be scaled directly to some~$\hat{w}\in\coneHull$ such that there is a~$\lambda\in\Lambda$ where~$\Pi^\coneHull(\hat{w})$ is equivalent to~$\Pi(\lambda)$.
In Case~b), we construct a vector~$\hat{w}(q)$ that converges to~$w$ and where corresponding values in~$\Lambda$ exist.
It can then be shown that there exists a~$q^*>0$ with $\langle q^* \rangle\in\bigO{\left(\poly(\langle I,w\rangle)\right)}$ such that a $(1+\varepsilon)$-approximation~$x\in\orsols$ for~$\Pi^\coneHull(\hat{w}(q^*))$ is also a $(1+\varepsilon)$-approximation for~$\Pi^\coneHull(w)$.
This~$q^*$ can be determined in polynomial time.

\subsection{Obtaining a Polytope}\label{sec::polytope}

Since the parameter set~$\coneHull$ of~$\Pi^\coneHull$ is a cone and not a polytope, Algorithm~\ref{algo::apx} is not applicable to the problem~$\Pi^\coneHull$.
We now discuss how to obtain a new parametric problem~$\Pi^P$ by reducing~$\coneHull$ into a polytope and using this polytope as the parameter set.
Then, every $(1+\varepsilon)$-approximation set for~$\Pi^P$ is a $(1+\varepsilon)$-approximation set for~$\Pi^\coneHull$ and, thus, also a $(1+\varepsilon)$-approximation set for the original problem~$\Pi$.
We distinguish two cases: either~$\coneHull$ is \emph{pointed}, i.e.\ $\coneHull$ does not contain a line, or~$\coneHull$ is not pointed.

\medskip

\noindent
We start with the case where~$\coneHull$ is pointed.
Note that the strategy we use here is not completely novel, an analogous strategy is already used by \citeauthor{hamel.lohne.ea2014BensonTypeAlgorithms}~\cite{hamel.lohne.ea2014BensonTypeAlgorithms} for vector optimization problems.
The \emph{dual cone} of~$\coneHull$ is the cone
\begin{align*}
    D_\coneHull \coloneqq{} &  \left\{u\in\mathbb{R}^{p+1}: u^\T w\geq 0 \text{ for all } w\in \coneHull\right\} \\
    ={} & \left\{u\in\mathbb{R}^{p+1}: u^\T w\geq 0 \text{ for all } w \in V \cup R \right\}.
\end{align*}
We choose a vector~$u$ from the interior of~$D_\coneHull$ such that $\langle u\rangle \in \bigO(\poly\langle\Pi\rangle)$ and $u_{p+1}=1$.
A vector $u$ satisfying these conditions always exists:
Since~$\coneHull$ is pointed, $D_\coneHull$ is full dimensional~\cite{schrijver1998TheoryLinearInteger}. 
Therefore, there must be a vector~$\hat{u}$ in the interior of~$D_\coneHull$ with $\hat{u}_i\neq0$ for $i=1,\ldots,p+1$.
Since~$w_{p+1}\geq0$ for all $w\in \coneHull$, the point $(\hat{u}_1,\ldots,\left|\hat{u}_{p+1}\right|)$ must be in~$D_\coneHull$ too, and we can choose~$u$ as $u\coloneqq\frac{1}{\left|\hat{u}_{p+1}\right|}(\hat{u}_1,\ldots,\left|\hat{u}_{p+1}\right|)$.
Furthermore, since the rows in an $\mathcal{H}$-representation of~$D_\coneHull$ are exactly the generators~$V$ and~$R$ from the $\mathcal{V}$-representation of $\coneHull$~\cite{schrijver1998TheoryLinearInteger}
, it can be easily ensured that $\langle u\rangle \in \bigO(\poly\langle\Pi\rangle)$.
Note that, since~$u$ is from the interior of~$D_\coneHull$, it holds that $u^\T w > 0$ for every $w\in\coneHull \setminus \{0\}$.

We now use~$u$ to construct a hyperplane that intersects~$\coneHull$ such that the intersection is a polytope and can be used as our new parameter set.
More formally, $\coneHull$ is intersected with the hyperplane $\{(\lambda,\xi)\in\mathbb{R}^{p+1}: u^\T(\lambda,\xi)=1\}$, resulting in the set
\[
    P\coloneqq \left\{(\lambda,\xi)\in\mathbb{R}^{p+1}:Q\lambda\geq \xi d,\;\xi\geq0,\;u^\T(\lambda,\xi)=1\right\}.
\]
An example of the construction of~$P$ is given in Figure~\ref{figure::homogenizationPolytope}.

\begin{lemma}
    The set~$P$ is a polytope.
\end{lemma}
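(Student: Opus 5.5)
Since $P$ is the intersection of $\coneHull$ with a hyperplane, and both are described by finitely many linear inequalities and equations, $P$ is a polyhedron. It therefore suffices to show that $P$ is bounded. The plan is to use the generator description $\coneHull=\cone(V\cup R)$ together with the strict positivity $u^\T w>0$ for all $w\in\coneHull\setminus\{0\}$. This positivity holds because $u$ lies in the interior of $D_\coneHull$, as already observed in the construction of $u$.

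\textbf{Explicit description.} I would show that
\[
P=\conv\left\{\frac{g}{u^\T g}: g\in V\cup R\right\}.
\]
Every generator $g\in V\cup R$ is nonzero: elements of $V$ have last coordinate~$1$, and elements of $R$ are extreme rays and hence nonzero. Therefore $u^\T g>0$, and $g/(u^\T g)$ is well defined and lies in $P$. By convexity of $P$, the convex hull on the right is contained in $P$.

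Conversely, take $w\in P$ and write $w=\sum_{g} \mu_g g$ with $\mu_g\geq0$. Set $\beta_g\coloneqq\mu_g\, u^\T g\geq0$. Then
\[
w=\sum_g \beta_g\,\frac{g}{u^\T g}, \qquad \sum_g \beta_g=\sum_g \mu_g\, u^\T g=u^\T w=1,
\]
so $w$ is a convex combination of the finitely many points $g/(u^\T g)$. Hence $P$ is the convex hull of finitely many points, and therefore a polytope.

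\textbf{Alternative argument.} One could instead argue via the recession cone. The recession cone of $P$ is $\{w\in\coneHull: u^\T w=0\}$, which equals $\{0\}$ by strict positivity, so $P$ is bounded. I would prefer the explicit convex-hull argument, because it also shows that $\vertx(P)$ consists of the scaled generators. This gives encoding lengths in $\bigO(\poly\langle\Pi\rangle)$ and a vertex count in $\bigO(m^{\lfloor p/2\rfloor})$, which the later analysis of Algorithm~\ref{algo::apx} needs.

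\textbf{Main obstacle.} The only delicate point is justifying $u^\T g>0$ for every generator, rather than merely $u^\T g\geq0$. Membership of $u$ in $D_\coneHull$ alone gives only the weak inequality. I would use that $u$ lies in the interior of $D_\coneHull$: if $u^\T g=0$ for some nonzero $g\in\coneHull$, then a small perturbation $u-\delta g$ would remain in $D_\coneHull$ for small $\delta>0$, yet satisfy $(u-\delta g)^\T g=-\delta\|g\|^2<0$, a contradiction.
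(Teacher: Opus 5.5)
Your proof is correct. Your main argument takes a different route from the paper; your ``alternative argument'' is essentially the paper's proof.

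The paper argues by contradiction. If $P$ were unbounded, there would be a direction $r\neq 0$ with $v+ar\in P$ for all $a>0$ and $v\in P$. Since $P\subseteq\coneHull$ and $\coneHull$ is a cone, $r\in\coneHull$, so $u^\T r>0$ and $u^\T(v+ar)=1+a\,u^\T r>1$, a contradiction. This argument is shorter and never touches the generators $V\cup R$. It needs only strict positivity of $u^\T w$ on $\coneHull\setminus\{0\}$. The paper asserts this directly from $u$ lying in the interior of $D_\coneHull$; your perturbation argument with $u-\delta g$ correctly supplies the justification the paper leaves implicit.

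Your primary argument instead rescales the generators. Via $\beta_g=\mu_g\,u^\T g$ it turns a conic combination $w=\sum_g\mu_g g$ into a convex one, giving $P=\conv\{g/(u^\T g): g\in V\cup R\}$. This is equally rigorous and yields more: an explicit $\mathcal{V}$-representation of $P$. That directly backs the paper's later, unproved remarks that $P$ has as many generators as $\coneHull$ and $\Lambda$, and that its vertices have encoding length in $\bigO(\poly\langle\Pi\rangle)$.

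Strictly, your argument gives only $\vertx(P)\subseteq\{g/(u^\T g): g\in V\cup R\}$. Equality holds when $V\cup R$ is irredundant, but the inclusion is all that the subsequent counting bounds require.
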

\begin{proof}
    We prove the claim by contradiction.
    Assume that~$P$ is not a polytope.
    Then there is a ray $r\in\mathbb{R}^{p+1}\setminus\{0\}$ such that $v+ar\in P$ for every $a>0,v \in P$.
    Since $P\subseteq \coneHull$, $r$ must be a ray in $\coneHull$ too, and because~$\coneHull$ is a cone, $r$ itself must be in~$\coneHull$.
    Then, $u^\T r>0$ holds (recall that~$u$ is in the interior of~$D_\coneHull$).
    For every $a>0,v \in P$, it follows that $u^\T(v+ar)=1+a u^\T r>1$ and, thus, $v+ar\notin P$, which is a contradiction.
\end{proof}

Furthermore, $P$ is $p$-dimensional, since it is the intersection of a ($p$-dimensional) hyperplane and a $(p+1)$-dimensional polyhedron, where the hyperplane intersects the interior of the polyhedron.

Let~$\Pi^P$ be the $(p+1)$-parametric optimization problem ${\left\{\min_{x\in X} w^\T f(x)\right\}}_{w\in P}$, i.e., the problem~$\Pi^\coneHull$ with~$P$ instead of~$\coneHull$ as the parameter set.
By choice of~$u$, for every~$w\in W\setminus\{0\}$, there is exactly one~$w'\in P$ with $aw=w'$ for some $a>0$.
If a solution~$x\in\orsols$ is a $(1+\varepsilon)$-approximation for~$\Pi^P(w')$, it is also a $(1+\varepsilon)$-approximation for~$\Pi^\coneHull(w)$.
Therefore, every $(1+\varepsilon)$-approximation set for~$\Pi^P$ is a $(1+\varepsilon)$-approximation for~$\Pi^{\coneHull}$ and, thus, for the original problem~$\Pi$ itself.

Due to the equality $u^\T(\lambda,\xi)=1$, it is possible to substitute~$\xi$ with $1-\sum_{i=1}^p u_i\lambda_i$.
By doing so, $\Pi^P$ becomes a $p$-parametric problem again.
Furthermore, the number of generators for the parameter set~$P$ is the same as for~$\coneHull$ and, thus, as for the original parameter set~$\Lambda$.

\medskip

Next, we look at the case where~$\coneHull$ contains a line.
The intersection of~$\coneHull$ and a hyperplane as in the previous case would be unbounded here.
Instead, we construct the polyhedron
\[
P \coloneqq \conv({V \cup R})
\]
and use it to construct the problem~$\Pi^P$ as the problem ${\left\{\min_{x\in X} w^\T f(x)\right\}}_{w\in P}$.
For every $w\in\coneHull\setminus\{0\}$, there is at least one~$w'\in P$ with $aw=w'$ for some $a>0$.
Again, every $(1+\varepsilon)$-approximation for~$\Pi^P$ is also an $(1+\varepsilon)$-approximation for~$\Pi^{\coneHull}$ and for the original problem~$\Pi$.
While the number of generators for~$P$ is the same as for~$\coneHull$ and, thus, $\Lambda$, it is a $(p+1)$-dimensional polytope.
Hence, we cannot substitute~$\xi$ as seen above.

\medskip

In both cases, the problem~$\Pi^P$ can be constructed from the problem~$\Pi$ in polynomial time by following the steps outlined in this section.
Algorithm~\ref{algo::apx} can then be used to compute an optimal solution set or an approximation set.
Since $\langle\Pi^P\rangle\in\bigO{\left(\poly\langle \Pi \rangle\right)}$ and the number of parameters increases by at most one, Algorithm~\ref{algo::apx} computes an $(1+\varepsilon)\alpha$-approximation set in $\bigO{\left(\poly(\langle\Pi\rangle,\varepsilon^{-1})\right)}$ as long as a polynomial-time oracle~$\oracle_\alpha$ can be provided (see Theorems~\ref{theorem::time::apxout} and~\ref{theorem::time::apxpoly}).

\begin{figure}
    \begin{minipage}[b]{0.475\textwidth}
        \centering
        \begin{tikzpicture}[%
  scale=0.6,
  oglambda/.style={draw=black!50, color=black!50, ultra thick, {-Latex}},
  Wbound/.style={draw=black!20, ultra thick, {Latex-Latex}},
  Winner/.style={draw=none, fill=black!8},
  Wnode/.style={color=black!35},
  tick/.style={thin},
]

  \draw[tick] (3,0) to (3,-0.25);
  \node[yshift=-10pt] at (3,0) {$\lambda_{\min}$};
  \draw[tick] (0,2) to (-0.25,2);
  \node[xshift=-10pt] at (0,2) {$1$};

  \draw[Winner] (10,0) to (0,0) to (10,6.66666) to cycle;
  \node[Wnode] at (8.1,4.1) {\large$\coneHull$};

  \draw[oglambda] (3,2) to node[pos=0.475, yshift=9pt] {\large$\Lambda \times\{1\}$} (10,2);
  \draw[Wbound] (10,0) to (0,0) to (10,6.66666);

  \draw[draw=none] (-0.5,0) to node[pos=0.55, below, yshift=-8pt] {\large$\lambda$} (10, 0);
  \draw[very thick] (-0.5,0) to (0, 0);

  \draw[very thick,->] (0,-0.5)to node[pos=0.95, left, xshift=-8pt] {\large$\xi$} (0, 7.0);


\end{tikzpicture}
    \end{minipage}
    \hfill
    \begin{minipage}[b]{0.475\textwidth}
        \centering
        \begin{tikzpicture}[%
  scale=0.6,
  Wbound/.style={draw=black!20, ultra thick, {Latex-Latex}},
  Winner/.style={draw=none, fill=black!7},
  Wnode/.style={color=black!35},
  tick/.style={thin},
  uvector/.style={draw=black!50, color=black!50, dashed, very thick, {-Stealth}},
  Wu/.style={draw=black!50, color=black!50, ultra thick},
]

  \draw[tick] (3,0) to (3,-0.25);
  \node[yshift=-10pt] at (3,0) {$\lambda_{\min}$};
  \draw[tick] (0,2) to (-0.25,2);
  \node[xshift=-10pt] at (0,2) {$1$};

  \draw[Winner] (10,0) to (0,0) to (10,6.66666) to cycle;
  \node[Wnode] at (8.1,4.1) {\large$\coneHull$};

  \draw[Wbound] (10,0) to (0,0) to (10,6.66666);
  \draw[uvector,shorten >=0.0pt] (0,0) to node[pos=0.55, yshift=8pt] {\large$u$} (6.5,2);
  \draw[Wu] (7.15385,0) to node[pos=0.5, xshift=10pt] {$P$} ($(7.15385,0)!1.95!(6.5,2)$);

  \draw[draw=none] (-0.5,0) to node[pos=0.55, below, yshift=-8pt] {\large$\lambda$} (10, 0);
  \draw[very thick] (-0.5,0) to (0, 0);

  \draw[very thick,->] (0,-0.5)to node[pos=0.6, left, xshift=-8pt] {\large$\xi$} (0, 7.0);


\end{tikzpicture}
    \end{minipage}
    \caption{%
    Geometric interpretation of the construction of~$P$ (for the example from Figure~\ref{figure::homogenizationIntro}):
    The new parameter set~$P$ is obtained by intersecting the cone~$\coneHull$ with the hyperplane $\{(\lambda,\xi)\in\mathbb{R}^{p+1}: u^\T(\lambda,\xi)=1\}$.
    Then, $P$ is a $1$-dimensional polytope in~$\mathbb{R}^2$.%
    \label{figure::homogenizationPolytope}}
\end{figure}

\section{Non-Approximable Maximization Problems}\label{sec::maxNotApx}

In Assumption~\ref{assumption::general}, we assume that~$F_\lambda(x)\geq0$ for all~$x\in X$ and all~$\lambda\in\Lambda$.
This ensures that $F^*(\lambda)\geq0$ holds for all~$\lambda\in\Lambda$ --- a necessary condition for approximation:
If $F^*(\lambda)<0$ for some~$\lambda\in\Lambda$, then $(1+\varepsilon) \cdot F^*(\lambda)<F^*(\lambda)\leq F_\lambda(x)$ for all~$x\in X$, and no solution~$x$ can approximate~$\Pi(\lambda)$.
While, for parametric minimization problems, $F_\lambda(x)\geq0$ for all~$x\in X$ and all~$\lambda\in \Lambda$ is equivalent to $F^*(x)\geq0$ for all~$\lambda\in \Lambda$, the same does not hold for parametric maximization problems.
In the maximization case, there might be a solution~$x$ such that $F_\lambda(x)<0$ for some~$\lambda\in\Lambda$ while, simultaneously, $F^*(\lambda)\geq 0$.
This raises the following question: If the assumption $F_\lambda(x)\geq0$ for all~$x\in X$ and all~$\lambda\in\Lambda$ is replaced by the weaker assumption that $F^*(\lambda)\geq 0$ for all~$\lambda\in\Lambda$, does Algorithm~\ref{algo::apx} still find an approximation set in polynomial time (with an appropriate oracle~$\oracle_\alpha$)?

Due to the aforementioned equivalency in the minimization case, we need to consider this question only for parametric maximization problems.
Recall that, in the maximization case, $0<\varepsilon<1$ and a solution~$x$ is a $(1-\varepsilon)$-approximation for~$\Pi(\lambda)$ if $F_\lambda(x)\geq (1-\varepsilon)\cdot F^*(\lambda)$.
Even under the weaker assumption, Algorithm~\ref{algo::apx} is finite and outputs a $(1-\varepsilon)\alpha$-approximation set.
This follows directly from the proof of Theorem~\ref{theorem::algo::apxGuarantee}, which only requires $F^*(\lambda)\geq0$.
This leaves the polynomial running time.
The proof of Theorem~\ref{theorem::time::apxpoly}, however, requires the stricter assumption of $F_\lambda(x)\geq0$ for all~$x\in X$ and all~$\lambda\in\Lambda$.

\medskip

We now show that, under the weaker assumption that $F^*(\lambda)\geq 0$ for all~$\lambda\in\Lambda$, there exist parametric maximization problems with families of  instances where the cardinality of every $(1-\varepsilon)$-approximation set grows superpolynomial in the encoding size.
Then, every $(1-\varepsilon)$-approximation algorithm must run in superpolynomial time, regardless of whether a polynomial-time oracle~$\oracle_\alpha$ exists.
In fact, such problems also exist under the stronger assumption that $F^*(\lambda)\geq K$ for an arbitrary constant~$K\geq0$.
This result is stated formally in Theorem~\ref{theorem::maxNotApx}, and the rest of this section is used to prove it.

\begin{theorem}\label{theorem::maxNotApx}
    Let $K\geq0$ and $0<\varepsilon<1$.
    For every~$k\in\mathbb{N}$, there exists a $2$-parametric maximization linear program~$\Pi$ such that $F^*(\lambda)\geq K$ for all~$\lambda\in\Lambda$, $\langle \Pi \rangle \in \bigO(\poly(k))$, and every $(1-\varepsilon)$-approximation set has cardinality in~$\Omega(2^k)$.
\end{theorem}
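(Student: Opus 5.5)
The plan is to build an explicit family of instances in which the non-negativity of $F_\lambda(x)$ fails for most pairs $(x,\lambda)$, while $F^*(\lambda)$ stays at least $K$. The construction should make $F^*$ strongly curved relative to its (small) value, so that each solution $(1-\varepsilon)$-approximates $\Pi(\lambda)$ only for $\lambda$ in a tiny region. By Lemma~\ref{lemma::apx::gridarg}, exactly this failure of the hypothesis $F_\lambda(x)\geq 0$ is what breaks the bound of Theorem~\ref{theorem::time::apxpoly}, so the example should exploit it directly.

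Concretely, I would take $N=2^k$ and a feasible polytope $X\subseteq\mathbb{R}^{n}$ with $n\in\bigO(k)$ whose image under a linear map $x\mapsto y(x)\in\mathbb{R}^2$ is (up to rational rounding) a regular $N$-gon with vertices $y^1,\ldots,y^N$ on the unit circle. Such a polytope with polynomially many constraints exists by the Ben-Tal--Nemirovski extended formulation of the regular $2^k$-gon, whose size is $\bigO(k)$. As objective I would use $F_\lambda(x)=\lambda_1y_1(x)+\lambda_2y_2(x)-s$, so $F^*(\lambda)=h(\lambda)-s$, where $h$ is the support function of the $N$-gon. I would choose $s=\cos(\pi/N)-c/N^2$ for a suitable constant $c=c(\varepsilon)$.

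Next I would check the two required properties on the directions $u^j=y^j$.
\begin{itemize}
\item At $\lambda=u^j$ we get $F^*(u^j)=1-s\approx \pi^2/(2N^2)+c/N^2$.
\item Every other vertex gives at most $\cos(2\pi/N)-s\approx -3\pi^2/(2N^2)+c/N^2$.
\item For $c$ small relative to $\varepsilon$, the second value is below $(1-\varepsilon)(1-s)$, so $y^j$ (the vertex of the polygon) is the only solution that $(1-\varepsilon)$-approximates $\Pi(u^j)$. By linearity of $F_\lambda(x)$ in $\lambda$ and the fact that optimal solutions are vertices, this extends to every solution of the LP, not just vertices.
\end{itemize}
Then any $(1-\varepsilon)$-approximation set must contain, for each $j$, a solution whose image is $y^j$, giving cardinality at least $N\in\Omega(2^k)$. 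Finally, I would multiply $f$ by $KN^2/c$ to lift the lower bound from $c/N^2$ to $K$. This scaling changes encoding lengths only by $\bigO(k)$ bits and does not affect which solutions approximate which $\Pi(\lambda)$.

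The main obstacle is the parameter set. $\Lambda$ must be a polyhedron with polynomially many inequalities on which $F^*\geq K$ holds everywhere, but it must still contain the $2^k$ critical directions $u^j$ (or suitable scalings of them). The naive choice $\Lambda=\conv\{u^j\}$ gives $h\geq\cos(\pi/N)$ and hence $F^*\geq c/N^2$, but it has exponentially many facets.

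The first fix I would try is to remove the exponential structure from $\Lambda$ by moving the radial scale into the objective. I would use a third objective component $f_3$ coupled to an auxiliary variable with $y_3\geq \|y\|$-type constraints, expressed through the same extended formulation, and take $\Lambda$ to be a fixed triangle or box. The candidate $f_3$ would be $f_3(x)=-s\cdot t(x)$, where $t$ is a norm-like auxiliary variable of the extended formulation, so that $F^*(\lambda)=\max_x\bigl(\lambda\cdot y(x)-s\,t(x)\bigr)$ behaves like $h(\lambda)-s\|\lambda\|$ along rays. This ratio is scale invariant, so a polynomially described region such as the triangle with vertices $(0,0)$ and two points on the unit circle should suffice once $F^*$ is bounded below by adding a constant offset in $f_3$.

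I expect verifying $F^*\geq K$ on all of this polynomially described $\Lambda$, including near its boundary and near the origin, while keeping the uniqueness gap at the $u^j$, to be the hardest and most delicate step. That is where I would spend the careful calculation.
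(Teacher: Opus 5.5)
There is a genuine gap, and it sits exactly at the step you defer. With $F_\lambda(x)=\lambda_1y_1(x)+\lambda_2y_2(x)-s$ and no other solutions, $F^*(\lambda)=h(\lambda)-s$, so $F^*(0)=-s<0$. The $u^j$ of a full regular $N$-gon have the origin in their convex hull, so \emph{no} convex $\Lambda$ containing them satisfies $F^*\geq 0$. Your claim $h\geq\cos(\pi/N)$ on $\conv\{u^j\}$ is false: $h(0)=0$, and already at chord midpoints $h=\cos^2(\pi/N)$. The obstruction is therefore convexity of $\Lambda$, not its number of facets.

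Your proposed repair rests on a false premise. $F^*(\lambda)=\max_x\bigl(\lambda\cdot y(x)+f_3(x)\bigr)$ is a maximum of affine functions whose intercepts $f_3(x)$ do not depend on $\lambda$. It therefore cannot behave like $h(\lambda)-s\|\lambda\|$, which is not even convex, and there is no scale invariance to exploit. If $t$ is the gauge of $y$ with respect to the polygon, the value function you actually get is $\max\{0,h(\lambda)-s\}$. Your triangle with vertices $0$ and two unit-circle points does not contain the $u^j$ strictly between its corners, and shrunken copies $\mu u^j$ do not inherit the critical structure. Finally, a constant offset in $f_3$ of size comparable to $K$ wipes out the $\bigO(N^{-2})$ gaps at the $u^j$, so every solution would approximate.

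The missing idea is a solution whose value is $0$ for every $\lambda$. The paper homogenizes an LP with exponentially many non-dominated extreme points: $Ax\geq\xi b$, $0\leq\xi\leq 1$, third objective $-\xi$. Then $F^*(\lambda)=\max\{0,h(\lambda)-1\}\geq0$ on the whole orthant $\Lambda=\mathbb{R}^2_{\geq0}$. At the point $\lambda^i$ where $F_\lambda(x^{i-1})=F_\lambda(x^{i+1})=0$, $x^i$ is the only extreme solution with positive value (Lemma~\ref{lemma::maxNotApx::DeltaVertex}).

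Only then is the lower bound lifted to $K$, in two steps. First comes a \emph{tiny} offset $u=(1-\varepsilon)\min_{1<i<N}F_{\lambda^i}(x^i)$, which has polynomial encoding length and preserves uniqueness. Then everything is rescaled by $K/u$.

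Your regular-polygon extended formulation could replace the Murty/Ruhe instances if you homogenize it the same way. Concretely, use $y\in tP$, $t\in[0,1]$, $f_3=-st$, and take $\Lambda$ to be, e.g., a box around the unit disk; you would also need to make the polygon rational, which you gloss over. As written, however, the proposal neither introduces the zero solution nor analyzes $F^*$ correctly. So the central claim, $F^*\geq K$ on a polynomially described $\Lambda$ while keeping the uniqueness gaps, is not established.
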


To prove Theorem~\ref{theorem::maxNotApx}, we start by modifying some results that, originally, were used in multi-objective optimization.
A bi-objective linear program is of the form
\begin{align*}\tag{$\multiSingleBase$}\label{problem::multiSingleBase}
    {\setlength{\arraycolsep}{1pt}
    \begin{array}{l r l}
        \max \quad & \multicolumn{2}{r}{\left({c^1}^\T x, {c^2}^\T x\right)} \\
        \mathrm{s.t.} & A x &\geq b, \\
        & x& \in\mathbb{R}^n
    \end{array}}
\end{align*}
for some $A\in\mathbb{Q}^{q\times n}$, $b\in\mathbb{Q}^q$, and $c^1,c^2\in\mathbb{Q}^n$.
We denote the set of feasible solutions by~$\Xkone$ and the set of images by~$\Ykone$.
Both sets are polyhedra.
By~$\YEkone$, we denote the set of non-dominated extreme points of~$\Ykone$, and by $N\coloneqq \left|\YEkone\right|$ its cardinality.
It is well-known that, for every $k>0$, the matrix~$A$ as well as the vectors~$b$, $c^1$, and~$c^2$ can be chosen such that $\langle \multiSingleBase \rangle\in \bigO(\poly(k))$, and $N\in\Omega(2^k)$.
Both~\cite{murty1980ComputationalComplexity} and~\cite{ruhe1988ComplexityResults} demonstrate how to construct such instances.
Due to the complexity of their constructions, we omit the descriptions here.
The additional properties that $\Ykone\subseteq\mathbb{R}_{\geq0}^2$ and $\YEkone\subseteq\mathbb{R}_{>0}^2$ can be ensured by standard techniques, e.g., by adding some offsets to the objectives and introducing auxiliary variables.
An illustration of the sets~$\Ykone$ and~$\YEkone$ is shown in Figure~\ref{figure::maxNotApx::multicrit::bi}.
For the rest of this section, we assume $k>0$ to be fixed and~\ref{problem::multiSingleBase} to have the aforementioned properties.

In the parametric setting, this bi-objective linear program corresponds to a $1$-parametric linear program with $\Lambda=\mathbb{R}_{\geq0}$, $X=\Xkone$, and $f(x)={({c^1}^\T x,{c^2}^\T x)}^\T$ (see Section~\ref{sec::prelims}).
This program can be $(1-\varepsilon)$-approximated, as it satisfies Assumption~\ref{assumption::general}.

We now transform~\ref{problem::multiSingleBase} into the following $3$-objective linear program:
\begin{align*}\tag{$\multiBiBase$}\label{problem::multiBiBase}
    {\setlength{\arraycolsep}{1pt}
    \begin{array}{l r l}
        \max \quad & \multicolumn{2}{r}{({c^1}^\T x,{c^2}^\T x,-\xi)} \\
        \mathrm{s.t.} & A x &\geq \xi b, \\
        & \multicolumn{2}{r}{0\leq\xi\leq1,}\\
        & (x,\xi)& \in\mathbb{R}^{n+1}.
    \end{array}}
\end{align*}
Here, we denote the set of feasible solutions by~$\Xktwo$, the set of images by~$\Yktwo$, and the set of non-dominated extreme points by~$\YEktwo$.
Clearly, $\langle \multiBiBase \rangle \in  \bigO(\langle\multiSingleBase\rangle)\subseteq \bigO(\poly(k))$.

\medskip

For every $\xi\in[0,1]$, the set
\[
\left\{\left({c^1}^\T x, {c^2}^\T x\right): A x =\xi b, x\in\mathbb{R}^n \right\}
\]
is equal to the set
\[
\left\{\xi \left({c^1}^\T x, {c^2}^\T x\right): A x = b, x\in\mathbb{R}^n \right\},
\]
we formally prove this in Lemma~\ref{lemma::maxNotApx::setsEqual} in the appendix.

This has the following effect: If~$\xi$ is fixed to~$1$, the structure of the problem described by~\ref{problem::multiBiBase} is identical to that of~\ref{problem::multiSingleBase}, and the non-dominated images are identical in the first two objectives.
Improving the third objective by decreasing the value of~$\xi$ simply scales down this set of non-dominated images until all objectives are zero.
Figure~\ref{figure::maxNotApx::multicrit} illustrates an example of both~$\Ykone$ and the resulting~$\Yktwo$.

For every feasible solution~$(x,\xi)$ with $0<\xi<1$, the solution~$(\frac{1}{\xi}x,1)$ is also feasible, and~$(x,\xi)$ can be written as the convex combination $(x,\xi)=(1-\xi)\cdot(0,0,0)+\xi\cdot(\frac{1}{\xi}x,1)$.
Therefore, all vertices of~$\Yktwo$ have either~$0$ or $-1$ (since the third objective is $-\xi$) as their third coordinate.
The set of non-dominated extreme points of Problem~\ref{problem::multiBiBase} then is
\[
\YEktwo = \left\{(y_1,y_2,-1): y\in \YEkone \right\} \cup \{(0,0,0)\}.
\]

From now on, $y^0=0$ denotes the all-zero image in~$\YEktwo$, and $y^1,\ldots,y^N$ denote the other images in~$\YEktwo$.
Furthermore, let $y^1,\ldots,y^N$ be ordered by their first objective value in increasing fashion, i.e., $y_{1}^1<\cdots<y_{1}^{N}$.
Since all these images are non-dominated, this implies that $y_{2}^1>\cdots>y_{2}^{N}$.
A visualization of this ordering is provided in Figure~\ref{figure::maxNotApx::multicrit}.
For each $i\in\{0,\ldots,N\}$, we let $x^i$ denote a feasible solution that maps to~$y^i$, and by $\XEktwo\coloneqq\{x^i:i=0,\ldots,N\}$ we denote the set containing these solutions.

\begin{figure}
    \begin{minipage}[b]{0.4\textwidth}
        \centering
        \begin{tikzpicture}[scale=0.6,
    edgevisible/.style={draw=black!40, very thick},
    edgeinvisible/.style={draw=black!40, very thick, dashed},
    facevisible/.style={draw=none, fill=black!9, opacity=0.5},
    faceinvisible/.style={draw=none,},
    expoint/.style={fill=black},
    boxvisible/.style={thin, dashed},
    boxinvisible/.style={thin, dashed},]


    \node(y0) at (0.0,8.25) {};
    \node(y1) at (1.0,8.25) {};
    \node(y2) at (3.869,7.827) {};
    \node(y3) at (6.256,6.256) {};
    \node(y4) at (7.827,3.869) {};
    \node(y5) at (8.25,1.0) {};
    \node(y6) at (8.25,0.0) {};
    \node(y7) at (0.0,0.0) {};

    \draw[facevisible] (y0.center) to (y1.center) to (y2.center) to (y3.center) to (y4.center) to (y5.center) to (y6.center) to (y7.center) to cycle;
    \draw[edgevisible] (y0.center) to (y1.center) to (y2.center) to (y3.center) to (y4.center) to (y5.center) to (y6.center);

    \foreach \i in {1,2,3,4,5} {
        \fill[expoint] (y\i) circle (4pt);
    }

    \foreach \i in {1,2,3,4,5} {
        \draw[draw=none] (y\i.center) to node[pos=-0.075] {$y^{\i}$} (0,0);
    }

	\draw[very thick,->] (-0.25,0) to node[pos=0.95, below] {$f_1(x)$} (9.0, 0);
	\draw[very thick,->] (0,-0.25)to node[pos=1.0, left] {$f_2(x)$} (0, 9.0);

\end{tikzpicture}
        \vspace*{1.0cm}
    \end{minipage}
    \hfill
    \begin{minipage}[b]{0.55\textwidth}
        \centering
        \begin{tikzpicture}[scale=0.65,%
	x={(1.0cm, 0.0cm)},
	z={(-0.20cm, -0.28cm)},
	y={(0.0cm, 1.0cm)},
    edgevisible/.style={draw=black!40, very thick},
    edgeinvisible/.style={draw=black!40, very thick, dashed},
    facevisible/.style={draw=none, fill=black!9, opacity=0.5},
    faceinvisible/.style={draw=none,},
    expoint/.style={fill=black},
    boxvisible/.style={thin, dashed},
    boxinvisible/.style={thin, dashed},
]

    \def\xishift{-8.0}

    \node(y0) at (0.0,8.25,\xishift) {};
    \node(y1) at (1.0,8.25,\xishift) {};
    \node(y2) at (3.869,7.827,\xishift) {};
    \node(y3) at (6.256,6.256,\xishift) {};
    \node(y4) at (7.827,3.869,\xishift) {};
    \node(y5) at (8.25,1.0,\xishift) {};
    \node(y6) at (8.25,0.0,\xishift) {};
    \node(y7) at (0.0,0.0,\xishift) {};

    \node(ysource) at (0,0,0) {};

    \node(box1) at (8.25,0,0) {};
    \node(box2) at (0,8.25,0) {};
    \node(box3) at (8.25,8.25,0) {};
    \node(box4) at (8.25,8.25,\xishift) {};

    \draw[faceinvisible] (y0.center) to (y1.center) to (y2.center) to (y3.center) to (y4.center) to (y5.center) to (y6.center) to (y7.center) to cycle;
    \draw[faceinvisible] (y0.center) to (y7.center) to (ysource.center) to cycle;
    \draw[faceinvisible] (y6.center) to (y7.center) to (ysource.center) to cycle;

    \draw[edgevisible] (y0.center) to (y7.center) ;
    \draw[edgevisible] (y6.center) to (y7.center) ;
    \draw[edgevisible] (ysource.center) to (y7.center);

	\draw[very thick,->] (-0.5,0,0) to node[pos=1.0, below] {$f_1(x)$} (9.75, 0,0);
	\draw[very thick,->] (0,-0.5,0) to node[pos=1.0, left] {$f_2(x)$} (0, 9.75,0);
	\draw[very thick,->] (0,0,1.0)  to (0,0,-10.25);

    \foreach \i [evaluate=\i as \j using int(\i+1)] in {0,1,2,3,4,5,5} {
        \draw[facevisible] (y\i.center) to (y\j.center) to (ysource.center) to cycle;
    }

    \foreach \i [evaluate=\i as \j using int(\i+1)] in {0,1,2,3,4,5} {
        \draw[edgevisible] (y\i.center) to (y\j.center);
    }
    \foreach \i in {0, 1,2,3,4,5,6} {
        \draw[edgevisible] (y\i.center) to (ysource.center);
    }
    \foreach \i in {1,2,3,4,5} {
        \draw[draw=none] (y\i.center) to node[pos=-0.08] {$y^{\i}$} (ysource.center);
    }
    \node at (0.5,-0.45,0) {$y^{0}$} (y3.center);

    \foreach \i in {1,2,3,4,5} {
        \fill[expoint] (y\i) circle (4pt);
    }
    \fill[expoint] (ysource) circle (4pt);


    \draw[boxvisible] (ysource.center) to (box1.center);
    \draw[boxvisible] (ysource.center) to (box2.center);
    \draw[boxvisible] (y6.center) to (box1.center);
    \draw[boxvisible] (y0.center) to (box2.center);
    \draw[boxvisible] (y6.center) to (box4.center);
    \draw[boxvisible] (y0.center) to (box4.center);
    \draw[boxvisible] (box1.center) to (box3.center);
    \draw[boxvisible] (box2.center) to (box3.center);
    \draw[boxvisible] (box4.center) to (box3.center);

	\draw[draw=none] (0,0,0.75)  to node[pos=0.95, above, xshift=3.5] {$-\xi$} (0,0,-10.25);

\end{tikzpicture}
    \end{minipage}\\[-16pt]
    \begin{minipage}{0.45\textwidth}
        \subcaption{$\Ykone$ in~$\mathbb{R}^2$\label{figure::maxNotApx::multicrit::bi}}
    \end{minipage}
    \hfill
    \begin{minipage}{0.45\textwidth}
        \subcaption{$\Yktwo$ in~$\mathbb{R}^3$\label{figure::maxNotApx::multicrit::tri}}
    \end{minipage}
    \caption{An example of the sets~$\Ykone$ and~$\Yktwo$ for~$\multiSingleBase$ and~$\multiBiBase$, respectively. The non-dominated extreme points $y^0,\ldots, y^N$ are ordered by increasing first components.\label{figure::maxNotApx::multicrit}}
\end{figure}

\medskip

Before considering the parametric variant of~\ref{problem::multiBiBase}, we first state two properties of the set~$\YEktwo$.
With some abuse of notation, for $y^i,y^j\in\YEktwo$ let
\[
\det(y^i,y^j)\coloneqq\det\begin{pmatrix}
    y_{1}^i & y_{1}^j \\ y_{2}^i & y_{2}^j
\end{pmatrix} = y_{1}^i\cdot y_{2}^j - y_{1}^j\cdot y_{2}^i,
\]
denote the determinant of the square matrix consisting of only the first two entries of~$y^i$ and~$y^j$.
Due to the ordering of $y^1,\ldots,y^N$, it holds that $\det(y^i,y^j)<0$ for $1\leq i < j$.

Furthermore, for every choice of~$i,j,\ell$ with $1\leq i < j < \ell \leq N$, it holds that
\begin{align}
\frac{\det(y^j,y^i)+\det(y^\ell,y^j) }{\det(y^\ell,y^i)} >\, & 1, \label{equation::maxNotApx::detineq1} \\
\frac{\det(y^\ell,y^i)-\det(y^j,y^i)}{\det(y^\ell,y^j)} <\, & 1, \label{equation::maxNotApx::detineq2} \\
\frac{\det(y^j,y^i)-\det(y^\ell,y^i)}{\det(y^\ell,y^j)} <\, & 1. \label{equation::maxNotApx::detineq3}
\end{align}
The first inequality is developed from the geometric properties of determinants, a formal proof is given in Lemma~\ref{lemma::maxNotApx::determinants} in the appendix.
The second and third inequality can directly be derived from the first.

\medskip

We now consider the $2$-parametric linear program~$\paraBiBase$ that we derive from~\ref{problem::multiBiBase}.
It consists of the parameter set~$\Lambda=\mathbb{R}_{\geq0}^2$, the feasible set~$\Xktwo$, and the parametric value function $F_\lambda(x)= \lambda_1 {c^1}^T x + \lambda_2 {c^2}^\T x - \xi$.
Clearly, it holds that $F^*(\lambda)\geq F_\lambda(x^0)=0$ for every $\lambda\in\Lambda$, and~$\paraBiBase$ can be encoded with length in $\bigO{(\poly(k))}$.

To give a lower bound on the cardinality of every $(1-\varepsilon)$-approximation set for~$\paraBiBase$, it is sufficient to focus on approximation sets that only contain solutions from~$\XEktwo$.
The reason is the following:
In an arbitrary solution set, every solution that is dominated in the $3$-objective problem~\ref{problem::multiBiBase} can be replaced by a solution dominating it, since the  parametric value function for the dominating solution is higher for every~$\lambda\in\Lambda$.
Furthermore, every non-dominated solution can be constructed as a convex combination of at most $3$~solutions from~$\XEktwo$.
Then, for every $\lambda\in\Lambda$, at least one of these three solutions must be as good or better for~$\paraBiBase(\lambda)$ as the original solution.
Therefore, there must be at least one $(1-\varepsilon)$-approximation set for~$\paraBiBase$ that only consists of solutions from~$\XEktwo$ and contains at most three times as many solutions as the $(1-\varepsilon)$-approximation set of minimum cardinality.
If we can show that all $(1-\varepsilon)$-approximation sets with solutions from~$\XEktwo$ have superpolynomial cardinality, it follows that the minimum cardinality set is also superpolynomial.

\medskip

We now show that every $(1-\varepsilon)$-approximation set must contain at least $N-2$~solutions from~$\XEktwo$ (in fact, it must contain all $N+1$~solutions, but we prove only the more straight-forward $N-2$ result).
As $N-2\in\Omega{\left(2^k\right)}$, this implies the aforementioned superpolynomial cardinality of the minimum cardinality $(1-\varepsilon)$-approximation set.

\begin{figure}
    \begin{minipage}[t]{0.47\textwidth}
        \centering
        \begin{tikzpicture}[scale=0.55,%
    edgevisible/.style={draw=black!40, very thick},
    edgesinvisible/.style={draw=none},
    rayvisible/.style={->},
    facevisible/.style={draw=none, fill=black!9, opacity=0.5},
]
    \coordinate (y0) at (0.000, 0.000);
    \coordinate (y1) at (0.000, 5.818);
    \coordinate (y2) at (0.000, 10.000);
    \coordinate (y3) at (0.917, 5.707);
    \coordinate (y4) at (2.607, 10.000);
    \coordinate (y5) at (3.344, 4.525);
    \coordinate (y6) at (4.525, 3.344);
    \coordinate (y7) at (5.707, 0.917);
    \coordinate (y8) at (5.818, 0.000);
    \coordinate (y9) at (7.091, 10.000);
    \coordinate (y10) at (10.000, 0.000);
    \coordinate (y11) at (10.000, 2.607);
    \coordinate (y12) at (10.000, 7.091);
    \coordinate (y13) at (10.000, 10.000);


    \draw[facevisible] (y0) to (y1) to (y3) to (y5) to (y6)to (y7) to (y8) to cycle;

    \draw[facevisible] (y1) to (y2) to (y4) to (y3) to cycle;
    \draw[facevisible] (y3) to (y4) to (y9) to (y5) to cycle;
    \draw[facevisible] (y5) to (y9) to (y13) to (y12) to (y6) to cycle;
    \draw[facevisible] (y6) to (y12) to (y11) to (y7) to cycle;
    \draw[facevisible] (y7) to (y11) to (y10) to (y8) to cycle;


    \draw[edgesinvisible] (y2) to (y4);
    \draw[edgesinvisible] (y4) to (y9);
    \draw[edgesinvisible] (y9) to (y13);
    \draw[edgesinvisible] (y13) to (y12);
    \draw[edgesinvisible] (y12) to (y11);
    \draw[edgesinvisible] (y11) to (y10);


    \draw[edgevisible] (y0) to (y1);
    \draw[edgevisible] (y1) to (y3);
    \draw[edgevisible] (y3) to (y5);
    \draw[edgevisible] (y5) to (y6);
    \draw[edgevisible] (y6) to (y7);
    \draw[edgevisible] (y7) to (y8);
    \draw[edgevisible] (y8) to (y0);


    \draw[edgevisible, rayvisible] (y1) to (y2);
    \draw[edgevisible, rayvisible] (y3) to (y4);
    \draw[edgevisible, rayvisible] (y5) to (y9);
    \draw[edgevisible, rayvisible] (y6) to (y12);
    \draw[edgevisible, rayvisible] (y7) to (y11);
    \draw[edgevisible, rayvisible] (y8) to (y10);

    \node at (2.0,2.0) {$\Lambda(x^0)$};
    \node at (0.95,8.5) {$\Lambda(x^1)$};
    \node at (3.3,7.7) {$\Lambda(x^2)$};
    \node at (5.7,5.7) {$\Lambda(x^3)$};
    \node at (7.3,3.2) {$\Lambda(x^4)$};
    \node at (8.5,0.7) {$\Lambda(x^5)$};

    \node at (5.0,-0.75) {$\lambda_1$};
    \node at (-0.75,5.0) {$\lambda_2$};

\end{tikzpicture}
    \end{minipage}
    \hfill
    \begin{minipage}[t]{0.47\textwidth}
        \centering
        \begin{tikzpicture}[scale=0.4,
    zeroedge/.style={draw=black!40, very thick},
    soledge/.style={draw=black!40, very thick},
    deltaedge/.style={draw=black!40, very thick, dashed},
    rayvisible/.style={->},
    facevisible/.style={draw=none, fill=black!9, opacity=0.5},
    facedelta/.style={draw=none, fill=black!0.4, opacity=0.5},
    ]

    \coordinate (y1) at (0.000, 6.000);
    \coordinate (y2) at (6.000, 0.000);
    \coordinate (y3) at (4.9, 4.9);
    \draw[draw=none] ($(y3)!1.8!(y1)$) to node[pos=0.0](y4){} (y1);
    \draw[draw=none] ($(y3)!1.8!(y2)$) to node[pos=0.0](y5){} (y2);
    \coordinate (y6) at (3.0, 11.0);
    \coordinate (y7) at (11.0, 3.0);
    \coordinate (y8) at (11.0, 11.0);

    \draw[facevisible] (y1) to (y3) to (y2) to (y7) to (y8) to (y6) to cycle;
    \draw[facevisible] ($(y3)!1.8!(y1)$) to (y1) to (y6) to (-3.92, 11.00) to cycle;
    \draw[facevisible] ($(y3)!1.8!(y2)$) to (y2) to (y7) to (11.00,-3.92) to cycle;
    \draw[facevisible] (-3.92,-3.92) to (-3.92,6.98) to (y1) to (y2) to (6.98,-3.92) to cycle;
    \draw[facedelta] (y1) to (y2) to (y3) to cycle;

    \draw[zeroedge] (y1) to (y2);
    \draw[soledge] ($(y3)!1.8!(y1)$) to node[pos=0.0](y4){} (y1);
    \draw[soledge] ($(y3)!1.8!(y2)$) to node[pos=0.0](y5){} (y2);
    \draw[deltaedge] (y3) to (y1);
    \draw[deltaedge] (y3) to (y2);

    \coordinate (y6) at (3.0, 11.0);
    \coordinate (y7) at (11.0, 3.0);
    \draw[soledge, rayvisible] (y1) to (y6);
    \draw[soledge, rayvisible] (y2) to (y7);

    \node at (3.8,3.8) {$\Delta(3)$};
    \node at (0.0,0.0) {$\Lambda(x^0)$};
    \node at (7.5,7.5) {$\Lambda(x^3)$};
    \node at (-1.0,9.0) {$\Lambda(x^{2})$};
    \node at (9.0,-1.0) {$\Lambda(x^{4})$};

    \draw[draw=none] ($(y3)!1.7!(y1)$) to node[pos=0.45,sloped, yshift=-6]{\scriptsize\thinmuskip=0mu\medmuskip=0mu\thickmuskip=2mu$F_\lambda(x^{2})= 0$} (y1);
    \draw[draw=none] (y2) to node[pos=0.45,sloped,yshift=-6]{\scriptsize\thinmuskip=0mu\medmuskip=0mu\thickmuskip=2mu$F_\lambda(x^{4})=0$} ($(y3)!1.8!(y2)$);
    \draw[draw=none] (y1) to node[pos=0.5,sloped,yshift=-6]{\scriptsize\thinmuskip=0mu\medmuskip=0mu\thickmuskip=2mu$F_\lambda(x^{3})=0$} (y2);

    \filldraw[draw=black!40,fill=black!0.4] (y3) circle (7pt);

    \node[xshift=10pt,yshift=5pt] at (y3) {$\lambda^i$};

\end{tikzpicture}
    \end{minipage}\\[-0pt]
    \begin{minipage}[t]{0.46\textwidth}
        \subcaption{The optimality regions for the example of~$\paraBiBase$ from Figure~\ref{figure::maxNotApx::multicrit::tri}\label{figure::maxNotApx::paraspace::struct}}
    \end{minipage}
    \hfill
    \begin{minipage}[t]{0.46\textwidth}
        \subcaption{An example of the set~$\nonApxArea(3)$ and the construction of~$\lambda^3$.\label{figure::maxNotApx::paraspace::delta}}
    \end{minipage}
    \caption{A geometric representation of the optimality regions for~$\paraBiBase$ and for~$\Delta(3)$ from the proof of Lemma~\ref{lemma::maxNotApx::DeltaVertex}.
    The optimality region~$\Lambda(x)$ for a solution~$x$ is the set $\{\lambda\in\Lambda: F_\lambda(x)=F^*(\lambda)\}$.
    Optimality regions are polyhedra and can only intersect at their boundaries (see~\cite{przybylski.gandibleux.ea2009RecursiveAlgorithmFinding}).
    The set~$\Delta(i)$ is a subset of the set~$\Lambda(x^i)$, and the point~$\lambda^i$ can be seen as the intersection of the lines containing the edges between~$\Lambda(x^0)$ and~$\Lambda(x^{i-1})$, and between~$\Lambda(x^0)$ and~$\Lambda(x^{i+1})$.%
    \label{figure::maxNotApx::paraspace}}

\end{figure}

\begin{definition}\label{definition::maxNotApx::Delta}
    For $i\in\{1,\ldots,N\}$, we define
    \[
    \nonApxArea(i)\coloneqq
    \left\{\lambda\in\Lambda: \quad
    \begin{matrix}
        F_\lambda(x^i)> F_\lambda(x^0), \\
         F_\lambda(x^j)\leq F_\lambda(x^0)\phantom{,} & \forall\  j\in\{1,\ldots,N\}\setminus \{i\}
    \end{matrix}
    \right\}.
    \]
\end{definition}
The set $\nonApxArea(i)$ contains all parameter vectors~$\lambda\in\Lambda$ where~$F_\lambda(x^i)$ is strictly better than~$F_\lambda(x^0)$, and where, for each $x^j\in \XEktwo\setminus\{x^i\}$, the value of~$F_\lambda(x^j)$ is not better than of~$F_\lambda(x^0)$.
By construction of $x^0$, we have $F_\lambda(x^0)=0$ for all~$\lambda\in\Lambda$.
Accordingly, for~$\lambda\in\nonApxArea(i)$, the only solution from~$\XEktwo$ that can $(1-\varepsilon)$-approximate~$\paraBiBase(\lambda)$ is~$x^i$, since  $F_\lambda(x^j)\leq0$ for every other solution~$j\neq i$.
We now show $\nonApxArea(i)\neq \emptyset$ for every $1<i<N$.
For this, we pick the parameter vector~$\lambda^i$ as the solution of the system of equalities given by $F_{\lambda^i}(x^{i-1})=0$ and $F_{\lambda^i}(x^{i+1})=0$, and show that $\lambda^i\in\Delta(i)$.
A graphical interpretation is provided in Figure~\ref{figure::maxNotApx::paraspace}.

\begin{lemma}\label{lemma::maxNotApx::DeltaVertex}
    For $1<i<N$, let $\lambda^i\in\mathbb{R}^2$ be defined by
    \begin{equation}\label{equation::maxNotApx::lambda2solved}
    \begin{aligned}
        \lambda^i_1 &\coloneqq\frac{y_{2}^{i-1}-y_{2}^{i+1}}{\det(y^{i+1},y^{i-1})}, \\
        \lambda^i_2 &\coloneqq\frac{y_{1}^{i+1}-y_{1}^{i-1}}{\det(y^{i+1},y^{i-1})}.
    \end{aligned}
    \end{equation}
    Then, $\lambda^i\in\Delta(i)$.
\end{lemma}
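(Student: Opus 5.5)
The plan is a direct verification, using that all relevant values of $F_{\lambda}$ are affine in $\lambda$ and can be written with the determinants $\det(\cdot,\cdot)$. For $j\in\{1,\ldots,N\}$ the image of $x^j$ is $y^j=(y^j_1,y^j_2,-1)$, so $F_\lambda(x^j)=\lambda_1y^j_1+\lambda_2y^j_2-1$. Also $F_\lambda(x^0)=0$. Hence $\lambda^i\in\Delta(i)$ is equivalent to three conditions: (i) $\lambda^i\geq0$; (ii) $\lambda^i_1y^i_1+\lambda^i_2y^i_2>1$; (iii) $\lambda^i_1y^j_1+\lambda^i_2y^j_2\leq1$ for all $j\neq i$.

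First I check that~\eqref{equation::maxNotApx::lambda2solved} is exactly the Cramer's-rule solution of $\lambda_1y^{i-1}_1+\lambda_2y^{i-1}_2=1$ and $\lambda_1y^{i+1}_1+\lambda_2y^{i+1}_2=1$. This gives $F_{\lambda^i}(x^{i-1})=F_{\lambda^i}(x^{i+1})=0$, so (iii) holds for $j=i\pm1$. For (i), the ordering gives $y^{i-1}_2>y^{i+1}_2$ and $y^{i+1}_1>y^{i-1}_1$, and $\det(y^{i+1},y^{i-1})=-\det(y^{i-1},y^{i+1})>0$ by the sign property stated before~\eqref{equation::maxNotApx::detineq1}. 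So both components of $\lambda^i$ are positive and $\lambda^i\in\Lambda=\mathbb{R}^2_{\geq0}$.

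Next, expanding the numerators gives a closed formula for every $j$:
\[
\lambda^i_1y^j_1+\lambda^i_2y^j_2=\frac{\det(y^j,y^{i-1})-\det(y^j,y^{i+1})}{\det(y^{i+1},y^{i-1})}.
\]
Everything now reduces to comparing this quotient with $1$.

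For (ii), take $j=i$. Using $-\det(y^i,y^{i+1})=\det(y^{i+1},y^i)$, the quotient becomes
\[
\frac{\det(y^i,y^{i-1})+\det(y^{i+1},y^i)}{\det(y^{i+1},y^{i-1})},
\]
which is $>1$ by~\eqref{equation::maxNotApx::detineq1} applied to the triple $(i-1,i,i+1)$. So $F_{\lambda^i}(x^i)>0$.

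For (iii) with $j>i+1$, I apply~\eqref{equation::maxNotApx::detineq2} or~\eqref{equation::maxNotApx::detineq3} to the triple $(i-1,i+1,j)$. For $j<i-1$, I apply them to the triple $(j,i-1,i+1)$.

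I expect the main obstacle to be the sign bookkeeping. The quotient above is not literally in the form of~\eqref{equation::maxNotApx::detineq2} or~\eqref{equation::maxNotApx::detineq3}. I first multiply by the appropriate positive determinant, for instance $\det(y^j,y^{i+1})>0$ when $j>i+1$. Then I use antisymmetry, $\det(a,b)=-\det(b,a)$, to move terms between sides. I would try to reduce each case to the pattern ``(difference of two determinants sharing a vector) $<$ (third determinant)'', which is what~\eqref{equation::maxNotApx::detineq2} and~\eqref{equation::maxNotApx::detineq3} say after clearing denominators.

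If the algebra does not close directly, the fallback is the geometric fact behind Lemma~\ref{lemma::maxNotApx::determinants}: the points $y^1,\ldots,y^N$ are in convex position. The line $\{y:\lambda^i\cdot y=1\}$ passes through $y^{i-1}$ and $y^{i+1}$. Every other extreme point lies on the origin side of this line, except $y^i$, which lies strictly beyond it. This gives (iii) for all remaining $j$, and (ii) again.
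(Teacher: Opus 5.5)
Your proposal is correct and is essentially the paper's proof: Cramer's rule for $\lambda^i$, the closed formula $F_{\lambda^i}(x^j)=\frac{\det(y^j,y^{i-1})-\det(y^j,y^{i+1})}{\det(y^{i+1},y^{i-1})}-1$, inequality~\eqref{equation::maxNotApx::detineq1} on $(i-1,i,i+1)$ for $x^i$, and inequality~\eqref{equation::maxNotApx::detineq2} on $(j,i-1,i+1)$ for $j<i-1$. For $j>i+1$, resolve your ``or'' in favour of \eqref{equation::maxNotApx::detineq2}: apply it to $(i-1,i+1,j)$ and multiply through by $\det(y^j,y^{i+1})>0$ (equivalently, rearrange \eqref{equation::maxNotApx::detineq1}); \eqref{equation::maxNotApx::detineq3} as printed has a numerator that is negative from the ordering alone, so it cannot give this case, and the paper's own citation of it at that step is a slip.
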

\begin{proof}
    Let~$i$ be fixed such that $1<i<N$.
    The first condition for $\lambda^i\in\Delta(i)$ is that $\lambda^i\in\Lambda=\mathbb{R}_{\geq0}^2$.
    Recall that, by construction, $y_2^{i-1}>y_2^{i+1}$ and $y_1^{i+1}>y_1^{i-1}$, and, since $i+1>i-1$, we have $\det(y^{i+1},y^{i-1})>0$.
    Thus, $\lambda^i\in\mathbb{R}_{\geq0}^2$.

    For the second condition, we need to show that the strict inequality $F_{\lambda^i}(x^i)>F_{\lambda^i}(x^0)$  is satisfied.
    By construction, $F_{\lambda}(x^0)=0$ for all~$\lambda\in\Lambda$.
    Thus,
    \begin{equation}\label{equation::maxNotApx::FiEqualToZero}
    \begin{alignedat}{2}
        F_{\lambda^i}(x^i)&=\lambda_1^i y_{1}^i+\lambda_2^i y_{2}^i-1 \\
        \overset{(\ref{equation::maxNotApx::lambda2solved})}&{=}
            \frac{y_{2}^{i-1}-y_{2}^{i+1}}{\det(y^{i+1},y^{i-1})}y_{1}^i
            + \frac{y_{1}^{i+1}-y_{1}^{i-1}}{\det(y^{i+1},y^{i-1})}y_{2}^i
            -1 \\
        &= \frac{y_1^i y_2^{i-1} - y_2^{i}y_1^{i-1} + y_1^{i+1}y_2^{i}-y_2^{i+1} y_1^{i}}{\det(y^{i+1}, y^{i-1})} -1 \\
        &= \frac{\det(y^i, y^{i-1})+\det(y^{i+1}, y^{i})}{\det(y^{i+1}, y^{i-1})} -1 \\
        \overset{(\ref{equation::maxNotApx::detineq1})}&> 1 - 1 = 0.
    \end{alignedat}
    \end{equation}
    For the last condition, we need to show that $F_{\lambda^i}(x^j)\leq F_{\lambda^i}(x^0) = 0$ for all $j\in\{1,\ldots,N\}\setminus\{i\}$.
    First, let $j<i$.
    Similar to Equation~\eqref{equation::maxNotApx::FiEqualToZero}, for $j=i-1$, we obtain
    \begin{equation*}
        F_{\lambda^i}(x^{j})
        = \frac{\det(y^{i+1}, y^{j})-\det(y^{i-1},y^{j})}{\det(y^{i+1}, y^{i-1})} -1
         = \frac{\det(y^{i+1}, y^{i-1})}{\det(y^{i+1}, y^{i-1})} -1 = 0,
    \end{equation*}
    and for $j<i-1$, we obtain
    \begin{equation*}
        F_{\lambda^i}(x^j)
        = \frac{\det(y^{i+1}, y^{j})-\det(y^{i-1},y^j)}{\det(y^{i+1}, y^{i-1})} -1
         \overset{(\ref{equation::maxNotApx::detineq2})}{<} 1-1=0.
    \end{equation*}
    With a symmetric argument, for $j>i$, we obtain
    \begin{equation*}
        F_\lambda(x^j)
        = \frac{\det(y^{j},y^{i-1})-\det(y^{j},y^{i+1})}{\det(y^{i+1}, y^{i-1})} -1
        \overset{(\ref{equation::maxNotApx::detineq3})}{\leq} 1-1=0.
    \end{equation*}
\end{proof}

As a consequence of Lemma~\ref{lemma::maxNotApx::DeltaVertex}, for every $1<i<N$, the solution~$x^i$ is the only solution from~$\XEktwo$ that can $(1-\varepsilon)$-approximate the problem~$\paraBiBase{\left(\lambda^i\right)}$.
Therefore, a $(1-\varepsilon)$-approximation set consisting of solutions from~$\XEktwo$ must have a cardinality of at least $N-2\in\Omega(2^k)$.\footnote{
In fact, $x^0$, $x^1$, and~$x^{N}$ must also be contained in such an approximation set.
We omit a proof since the statement from Lemma~\ref{lemma::maxNotApx::DeltaVertex} is already sufficient to obtain the central result of this section.
}

\medskip

The construction of~$\paraBiBase$ proves Theorem~\ref{theorem::maxNotApx} under the assumption that $F^*(\lambda)\geq0$ for all~$\lambda\in\Lambda$.
We now show that, for every constant~$K>0$, we can easily modify~$\paraBiBase$ so that $F^*(\lambda)\geq K$ is ensured, the encoding length is still in~$\bigO(\poly(k))$, and every $(1-\varepsilon)$-approximation set has a cardinality in~$\Omega(2^k)$.
Without loss of generality, we can assume that~$K\in\mathbb{Q}$ (simply ``rounding'' up to a bigger rational number is always possible).

We modify~$\paraBiBase$ in two steps.
First, we add an absolute offset.
We define the parametric value function $F^u_\lambda(x)\coloneqq F_\lambda(x)+u$
and choose~$u$ as
\[
u \coloneqq (1-\varepsilon)\cdot\min_{1<i<N}F_{\lambda^i}(x^i).
\]
By Lemma~\ref{lemma::maxNotApx::DeltaVertex}, we obtain that~$u>0$.
Also, for constant~$\varepsilon$, we have $\langle u\rangle\in\bigO(\poly (k))$:
For every $1<i<N$, $\lambda_i$ is defined as the intersection of the two hyperplanes defined by the equalities $F_\lambda(x^{i-1})=0$ and $F_\lambda(x^{i+1})=0$ and, therefore, $\langle \lambda^i\rangle\in\bigO(\poly (k))$.
Thus, if~$F^u_\lambda$ is used as the parametric value function for $\paraBiBase$, the encoding length is still in $\bigO(\poly(k))$, and $({F^u})^*(\lambda)\geq F_\lambda^u(x^0) = u > 0$ for all $\lambda\in\Lambda$ holds.
Furthermore, for every $1<i<N$, it holds that
\begin{alignat*}{2}
    (1-\varepsilon)F^{u}_{\lambda^i}(x^i) & = (1-\varepsilon)F_{\lambda^i}(x^i)+(1-\varepsilon)u \\
    & \geq (1-\varepsilon)\left(\min_{1<j<N} F_{\lambda^j}(x^j)\right)+(1-\varepsilon)u \\
    & = u + (1-\varepsilon) u \\
    & > u \\
    & = F_{\lambda^i}(x^0)
\end{alignat*}

Again, $F_{\lambda^i}(x^0)>F_{\lambda^i}(x^j)$ for every $1<j<N$ and, therefore, $x^i$ is the only solution in~$\XEktwo$ that can $(1-\varepsilon)$-approximate~$\Pi(\lambda^i)$.
Every $(1-\varepsilon)$-approximation set needs at least~$\Omega(2^k)$ solutions.

In the second step, we bound the optimal objective function values from below by $K>0$ for every~$\lambda\in\Lambda$.
For this, we simply scale the parametric value function by the factor~$\nicefrac{K}{u}$, i.e., we construct the parametric value function $F_\lambda^K(x)=\nicefrac{K}{u} \cdot F_\lambda^u(x)$.%
\footnote{In the multi-objective setting, this can be seen as replacing the objectives $({c^1}^\T x,{c^2}^\T x,-\xi)$ of~$\multiBiBase$ by $\left(\nicefrac{K}{u}\cdot({c^1}^\T x),\nicefrac{K}{u}\cdot({c^2}^\T x),\nicefrac{K}{u}\cdot(u-\xi)\right)$.
}
Clearly, $F_\lambda^K(x^0)= K$ for every $\lambda\in\Lambda$.
Scaling by a positive factor does not, however, alter the optimal or $(1-\varepsilon)$-approximate solutions of~$\Pi(\lambda)$ for a given~$\lambda \in \Lambda$.
Therefore, the cardinality of every $(1-\varepsilon)$-approximation set is again in~$\Omega(2^k)$.
Since $\langle u\rangle\in \bigO(\poly(k))$ and~$K$ is assumed to be constant, the encoding length of this modified~$\paraBiBase$ is still in~$\bigO(\poly(k))$.

\section{Conclusion}

We presented a new algorithm for (linear-multi-)parametric optimization.
It can be used to solve parametric optimization problem instances exactly by determining an optimal solution set, or approximately by determining an approximation set.
As long as an appropriate oracle for the non-parametric variant of a problem is available, the former case runs in output-polynomial time, and the latter in polynomial time.
The assumptions needed to apply our algorithm are less strict than the assumptions made by previous parametric approximation algorithms.
In addition, we showed that a relaxation of these assumptions for parametric maximization problems leads to a class of problems that can never be approximated, regardless of the existence of a polynomial-time oracle.

Our results can also be applied to multi-objective optimization for computing convex approximation sets~\cite{diakonikolas2011ApproximationMultiobjective}.
As shown in~\cite{hamel.lohne.ea2014BensonTypeAlgorithms}, the so-called \emph{domination cone} of a multi-objective problem instance directly corresponds to the parameter set of a related parametric optimization problem instance.
Without proof, we remark that our algorithm can  be applied to this parametric optimization problem instance to compute a convex approximation set for the original multi-objective optimization problem instance.
To the best of our knowledge, this makes our algorithm the first polynomial-time approximation algorithm for multi-objective optimization problems with general polyhedral domination cones.

Although parametric approximation algorithms ensure polynomial running times, many of these algorithms find approximation sets of much higher cardinality than actually necessary.
This is demonstrated by computational studies in~\cite{helfrich.ruzika.ea2025EfficientlyConstructing,nemesch.ruzika.ea2025PolynomialTimeInnerApproximation}.
Thus, a goal in future research could be to investigate parametric approximation algorithms that compute approximation sets with a cardinality bounded in some form by the cardinality of the smallest possible approximation set.
Such an algorithm exists for the $1$-parametric case~\cite{bazgan.herzel.ea2022ApproximationAlgorithmGeneral}, but for the general multi-parametric case only some lower bounds are known~\cite{helfrich.herzel.ea2022ApproximationAlgorithmGeneral}, but no algorithms have been developed.

\FloatBarrier

\AtNextBibliography{\small}
\printbibliography[title={References},heading={bibintoc}]

@article{bazgan.herzel.ea2022ApproximationAlgorithmGeneral,
  title = {An Approximation Algorithm for a General Class of Parametric Optimization Problems},
  author = {Bazgan, Cristina and Herzel, Arne and Ruzika, Stefan and Thielen, Clemens and Vanderpooten, Daniel},
  year = {2022},
  journal = {Journal of Combinatorial Optimization},
  volume = {43},
  number = {5},
  pages = {1328--1358},
  doi = {10.1007/s10878-020-00646-5}
}

@inproceedings{below.loera.ea2000FindingMinimalTriangulations,
    author = {Below, A. and De Loera, J.A. and Richter-Gebert, J.},
    booktitle = {Proceedings of the 11th Annual ACM-SIAM Symposium on Discrete Algorithms ({SODA})},
    title = {Finding minimal triangulations of convex 3-polytopes is {NP}-hard},
    year = {2000},
    pages = {65--66},
    url = {https://dl.acm.org/doi/10.5555/338219.338235}
}

@inproceedings{bokler.mutzel2015OutputSensitiveAlgorithmsEnumerating,
  title = {Output-Sensitive Algorithms for Enumerating the Extreme Nondominated Points of Multiobjective Combinatorial Optimization Problems},
  booktitle = {Proceedings of the 23rd European Symposium on Algorithms ({ESA})},
  author = {Bökler, Fritz and Mutzel, Petra},
  year = {2015},
  pages = {288--299},
  doi = {10.1007/978-3-662-48350-3_25}
}

@phdthesis{bokler2018OutputsensitiveComplexityMultiobjective,
  title = {Output-Sensitive Complexity of Multiobjective Combinatorial Optimization Problems with an Application to the Multiobjective Shortest Path Problem},
  author = {Bökler, Fritz},
  year = {2018},
  school = {Technische Universität Dortmund},
  doi = {10.17877/DE290R-19130}
}

@inproceedings{bokler.nemesch.ea2023PaMILOSolverMultiobjective,
  author = {Bökler, Fritz and Nemesch, Levin and Wagner, Mirko H.},
  booktitle = {Operations Research Proceedings 2022},
  year = {2023},
  doi = {10.1007/978-3-031-24907-5_20},
  pages = {163--170},
  title = {{PaMILO}: A Solver for Multi-objective Mixed Integer Linear Optimization and Beyond}
}

@inproceedings{borndorfer.schenker.ea2016PolySCIP,
  author = {Borndörfer, Ralf and Schenker, Sebastian and Skutella, Martin and Strunk, Timo},
  booktitle = {Proceedings of the 5th International Congres on Mathematical Software ({ICMS})},
  year = {2016},
  doi = {10.1007/978-3-319-42432-3_32},
  pages = {259--264},
  title = {{PolySCIP}}
}

@article{carstensen1983ComplexityParametric,
  author = {Carstensen, P. J.},
  year = {1983},
  doi = {10.1007/BF02591893},
  journal = {Mathematical Programming},
  pages = {64--75},
  title = {Complexity of some parametric integer and network programming problems},
  volume = {26},
}

@article{chazelle1993OptimalConvexHull,
  title = {An Optimal Convex Hull Algorithm in Any Fixed Dimension},
  author = {Chazelle, Bernard},
  year = {1993},
  journal = {Discrete \& Computational Geometry},
  volume = {10},
  pages = {377--409},
  doi = {10.1007/BF02573985}
}

@article{csirmaz2021InnerApproximationAlgorithm,
  author = {Csirmaz, Laszlo},
  publisher = {Taylor \& Francis},
  year = {2021},
  doi = {10.1080/02331934.2020.1737692},
  journal = {Optimization},
  number = {7},
  pages = {1487--1511},
  title = {Inner Approximation Algorithm for Solving Linear Multiobjective Optimization Problems},
  volume = {70},
}

@article{daskalakis.diakonikola2016HowGood,
  author = {Daskalakis, C. and Diakonikolas, I. and Yannakakis, M.},
  year = {2016},
  doi = {10.1137/13093875X},
  journal = {SIAM Journal on Computing},
  number = {3},
  pages = {811--858},
  title = {How Good is the Chord Algorithm?},
  volume = {45},
}

@book{deloera.rambau.ea2010TriangulationsStructuresAlgorithms,
  title = {Triangulations: Structures for Algorithms and Applications},
  author = {De Loera, Jesus A. and Rambau, Jorg and Santos, Francisco},
  year = {2010},
  edition = {1},
  publisher = {Springer},
  doi = {10.1007/978-3-642-12971-1}
}

@inproceedings{diakonikolas.yannakakis2008SuccinctApproximateConvex,
author = {Diakonikolas, Ilias and Yannakakis, Mihalis},
title = {Succinct approximate convex {Pareto} curves},
year = {2008},
booktitle = {Proceedings of the Nineteenth Annual {ACM-SIAM} Symposium on Discrete Algorithms ({SODA})},
pages = {74--83},
}

@phdthesis{diakonikolas2011ApproximationMultiobjective,
  author = {Diakonikolas, I.},
  school = {Graduate School of Arts and Sciences, Columbia University},
  year = {2011},
  doi = {10.7916/D80K2GR9},
  title = {Approximation of Multiobjective Optimization Problems},
}

@book{ehrgott2005multicriteriaOptimization,
  author = {Ehrgott, Matthias},
  publisher = {Springer Berlin, Heidelberg},
  year = {2005},
  doi = {10.1007/3-540-27659-9},
  edition = {2},
  title = {Multicriteria Optimization},
}

@article{ehrgott.lohne.ea2012DualVariantBenson,
  author = {Ehrgott, Matthias and Löhne, Andreas and Shao, Lizhen},
  year = {2012},
  doi = {10.1007/s10898-011-9709-y},
  journal = {Journal of Global Optimization},
  number = {4},
  pages = {757--778},
  title = {A Dual Variant of {{Benson}}'s ``Outer Approximation Algorithm'' for Multiple Objective Linear Programming},
  volume = {52},
}

@article{ehrgott.shao.ea2011ApproximationAlgorithmConvex,
  title = {An Approximation Algorithm for Convex Multi-Objective Programming Problems},
  author = {Ehrgott, Matthias and Shao, Lizhen and Sch\"obel, Anita},
  year = {2011},
  journal = {Journal of Global Optimization},
  volume = {50},
  number = {3},
  pages = {397--416},
  doi = {10.1007/s10898-010-9588-7},
}

@article{eisner.severance1976MathematicalTechniques,
  author = {Eisner, M. J. and Severance, D. G.},
  year = {1976},
  doi = {10.1145/321978.321982},
  journaltitle = {Journal of the ACM},
  number = {4},
  pages = {619--635},
  title = {Mathematical Techniques for Efficient Record Segmentation in Large Shared Databases},
  volume = {23},
}

@article{fernandez-baca.seppalainen.ea2004ParametricMultipleSequence,
  author = {Fernández-Baca, David and Seppäläinen, Timo and Slutzki, Giora},
  year = {2004},
  doi = {10.1016/S1570-8667(03)00078-9},
  journal = {Journal of Discrete Algorithms},
  number = {2},
  pages = {271--287},
  title = {Parametric Multiple Sequence Alignment and Phylogeny Construction},
  volume = {2},
}

@article{fernandez-baca.srinivasan1991ConstructingMinimizationDiagrama,
  author = {Fernández-Baca, David and Srinivasan, S.},
  year = {1991},
  doi = {10.1016/0167-6377(91)90092-4},
  journal = {Operations Research Letters},
  number = {2},
  pages = {87--93},
  title = {Constructing the Minimization Diagram of a Two-Parameter Problem},
  volume = {10},
}

@article{giudici.halffmann2017ApproximationSchemes,
  author = {Giudici, A. and Halffmann, P. and Ruzika, S. and Thielen, C.},
  year = {2017},
  doi = {10.1016/j.ipl.2016.12.003},
  journal = {Information Processing Letters},
  pages = {11--15},
  title = {Approximation schemes for the parametric knapsack problem},
  volume = {120},
}

@inproceedings{glasser.reiwiesner2010ApproximabilityHardness,
  title = {Approximability and Hardness in Multi-objective Optimization},
  booktitle = {Proceedings of the 6th Conference on Computability in Europe ({CiE})},
  author = {Gla{\ss}er, C. and Reitwie{\ss}ner, C. and Schmitz, H. and Witek, M.},
  year = {2010},
  pages = {180--189},
  doi = {10.1007/978-3-642-13962-8_20},
}

@incollection{grotschel.lovasz.ea1988RationalPolyhedra,
  title = {Rational Polyhedra},
  booktitle = {Geometric Algorithms and Combinatorial Optimization},
  author = {Grötschel, Martin and Lovász, László and Schrijver, Alexander},
  date = {1988},
  pages = {157--196},
  publisher = {Springer},
  doi = {10.1007/978-3-642-97881-4_7}
}

@article{halman.holzhauser.ea2018FPTAKnapsackParaWeights,
  title = {An {FPTAS} for the knapsack problem with parametric weights},
  author = {Nir Halman and Michael Holzhauser and Sven O. Krumke},
  journal = {Operations Research Letters},
  volume = {46},
  number = {5},
  pages = {487--491},
  year = {2018},
  doi = {10.1016/j.orl.2018.07.005},
}

@article{hamel.lohne.ea2014BensonTypeAlgorithms,
  author = {Hamel, Andreas H. and Löhne, Andreas and Rudloff, Birgit},
  year = {2014},
  doi = {10.1007/s10898-013-0098-2},
  journal = {Journal of Global Optimization},
  langid = {english},
  number = {4},
  pages = {811--836},
  title = {Benson Type Algorithms for Linear Vector Optimization and Applications},
  volume = {59},
}

@article{helfrich.herzel.ea2022ApproximationAlgorithmGeneral,
  title = {An Approximation Algorithm for a General Class of Multi-Parametric Optimization Problems},
  author = {Helfrich, Stephan and Herzel, Arne and Ruzika, Stefan and Thielen, Clemens},
  year = {2022},
  journal = {Journal of Combinatorial Optimization},
  volume = {44},
  pages = {1459--1494},
  doi = {10.1007/s10878-022-00902-w}
}

@article{helfrich.ruzika.ea2025EfficientlyConstructing,
  author = {Helfrich, Stephan and Ruzika, Stefan and Thielen, Clemens},
  title = {Efficiently Constructing Convex Approximation Sets in Multiobjective Optimization Problems},
  journal = {INFORMS Journal on Computing},
  year = {2025},
  volume = {37},
  number = {5},
  pages = {1223--1241},
  doi = {10.1287/ijoc.2023.0220}
}

@article{herzel.ruzika.ea2021ApproximationMethodsMultiobjective,
  title = {Approximation Methods for Multiobjective Optimization Problems: A Survey},
  author = {Herzel, Arne and Ruzika, Stefan and Thielen, Clemens},
  year = {2021},
  journal = {INFORMS Journal on Computing},
  volume = {33},
  number = {4},
  pages = {1284--1299},
  doi = {10.1287/ijoc.2020.1028}
}

@article{heyde.lohne2008GeometricDualityMultiple,
  author = {Heyde, Frank and Löhne, Andreas},
  year = {2008},
  doi = {10.1137/060674831},
  journal = {SIAM Journal on Optimization},
  number = {2},
  pages = {836--845},
  title = {Geometric Duality in Multiple Objective Linear Programming},
  volume = {19},
}

@article{holzhauser.krumke2017AnFPTAS,
  author = {Holzhauser, M. and Krumke, S. O.},
  year = {2017},
  doi = {10.1016/j.ipl.2017.06.006},
  journal = {Information Processing Letters},
  pages = {43--47},
  title = {An {FPTAS} for the parametric knapsack problem},
  volume = {126},
}

@inbook{kaibel2011BasicPolyhedralTheory,
  author = {Kaibel, Volker},
  publisher = {John Wiley \& Sons, Ltd},
  title = {Basic Polyhedral Theory},
  booktitle = {Wiley Encyclopedia of Operations Research and Management Science},
  doi = {https://doi.org/10.1002/9780470400531.eorms0091},
  year = {2011}
}

@article{isermann1977EnumerationSetAll,
  author = {Heinz Isermann},
  title = {The Enumeration of the Set of All Efficient Solutions for a Linear Multiple Objective Program},
  journal = {Journal of the Operational Research Society},
  volume = {28},
  number = {3},
  pages = {711--725},
  year = {1977},
  doi = {10.1057/jors.1977.147},
}

@article{johnson.yannakakis.ea1988GeneratingAllMaximal,
  title = {On Generating All Maximal Independent Sets},
  author = {Johnson, David S. and Yannakakis, Mihalis and Papadimitriou, Christos H.},
  year = {1988},
  journaltitle = {Information Processing Letters},
  volume = {27},
  number = {3},
  pages = {119--123},
  doi = {10.1016/0020-0190(88)90065-8},
}

@article{katoh.ibaraki1987ParametricCharacterizationEapproximation,
  author = {Katoh, Naoki and Ibaraki, Toshihide},
  year = {1987},
  doi = {10.1016/0166-218X(87)90006-0},
  journal = {Discrete Applied Mathematics},
  number = {1},
  pages = {39--66},
  title = {A Parametric Characterization and an {$\varepsilon$}-Approximation Scheme for the Minimization of a Quasiconcave Program},
  volume = {17},
}

@incollection{lee.santos2017SubdivisionsTriangulations,
  author = {Lee, Carl W. and Santos, Francisco},
  booktitle = {Handbook of Discrete and Computational Geometry},
  publisher = {CRC Press},
  title = {Subdivisions and Triangulations of Polytopes},
  year = {2017},
  edition = {3},
  chapter = {16},
  pages = {415--447},
  doi = {10.1201/9781315119601},
}

@article{liu.ehrgott2018PrimalDualAlgorithms,
  title = {Primal and Dual Algorithms for Optimization over the Efficient Set},
  author = {Liu, Zhengliang and Ehrgott, Matthias},
  year = {2018},
  journal = {Optimization},
  volume = {67},
  number = {10},
  pages = {1661--1686},
  doi = {10.1080/02331934.2018.1484922},
}

@article{lohne.rudloff.ea2014PrimalDualApproximation,
  title = {Primal and Dual Approximation Algorithms for Convex Vector Optimization Problems},
  author = {Löhne, Andreas and Rudloff, Birgit and Ulus, Firdevs},
  year = {2014},
  journaltitle = {Journal of Global Optimization},
  volume = {60},
  pages = {713--736},
  doi = {10.1007/s10898-013-0136-0}
}

@article{lohne.weissing2017VectorLinearProgram,
  author = {Löhne, Andreas and Weißing, Benjamin},
  year = {2017},
  doi = {10.1016/j.ejor.2016.02.039},
  journal = {European Journal of Operational Research},
  number = {3},
  pages = {807--813},
  title = {The Vector Linear Program Solver {{{\emph{Bensolve}}}} -- Notes on Theoretical Background},
  volume = {260},
}

@article{murty1980ComputationalComplexity,
  author = {Murty, K. G.},
  year = {1980},
  doi = {10.1007/BF01581642},
  journal = {Mathematical Programming},
  number = {1},
  pages = {213--219},
  title = {Computational Complexity of Parametric Linear Programming},
  volume = {19},
}

@article{nemesch.ruzika.ea2025PolynomialTimeInnerApproximation,
  title = {A Polynomial-Time Inner Approximation Algorithm for Multiobjective and Parametric Optimization},
  author = {Nemesch, Levin and Ruzika, Stefan and Thielen, Clemens and Wittmann, Alina},
  year = {2026},
  journal = {INFORMS Journal on Computing},
  doi = {10.1287/ijoc.2025.1308}, 
}

@article{nemesch.ruzika.ea2025SurveyOfExact,
  author={Nemesch, Levin and Ruzika, Stefan and Thielen, Clemens and Wittmann, Alina},
  title={A survey of exact and approximation algorithms for linear-parametric optimization problems},
  journal={Journal of Global Optimization},
  year={2025},
  volume = {93},
  pages = {299--333},
  doi={10.1007/s10898-025-01512-6},
}

@inproceedings{papadimitriou.yannakakis2000ApproximabilityTradeoffsOptimal,
  title = {On the Approximability of Trade-Offs and Optimal Access of Web Sources},
  booktitle = {Proceedings of the 41st Annual Symposium on Foundations of Computer Science (FOCS)},
  author = {Papadimitriou, C.H. and Yannakakis, M.},
  year = {2000},
  pages = {86--92},
  doi = {10.1109/SFCS.2000.892068}
}

@article{przybylski.gandibleux.ea2009RecursiveAlgorithmFinding,
  author = {Przybylski, Anthony and Gandibleux, Xavier and Ehrgott, Matthias},
  publisher = {INFORMS},
  year = {2009},
  doi = {10.1287/ijoc.1090.0342},
  journal = {INFORMS Journal on Computing},
  number = {3},
  pages = {371--386},
  title = {A Recursive Algorithm for Finding All Nondominated Extreme Points in the Outcome Set of a Multiobjective Integer Programme},
  volume = {22},
}

@article{seidel1995UpperBoundTheorem,
  title = {The Upper Bound Theorem for Polytopes: An Easy Proof of Its Asymptotic Version},
  author = {Seidel, Raimund},
  date = {1995},
  journaltitle = {Computational Geometry},
  volume = {5},
  number = {2},
  pages = {115--116},
  doi = {10.1016/0925-7721(95)00013-Y}
}

@article{ruhe1988ComplexityResults,
  author = {Ruhe, G.},
  year = {1988},
  doi = {10.1007/BF01920568},
  journaltitle = {Zeitschrift für Operations Research},
  pages = {9--27},
  title = {Complexity results for multicriterial and parametric network flows using a pathological graph of {Zadeh}},
  volume = {32},
}

@book{schrijver1998TheoryLinearInteger,
  author = {Schrijver, A.},
  title = {Theory of Linear and Integer Programming},
  year = {1998},
  publisher = {Wiley},
  isbn = {978-0-471-98232-6},
}

@book{vazirani2001ApproximationAlgorithms,
  title = {Approximation Algorithms},
  author = {Vazirani, Vijay V.},
  year = {2003},
  publisher = {Springer},
  edition = {first},
  doi = {10.1007/978-3-662-04565-7},
}

@book{valenza1993LinearAlgebra,
  title = {Linear Algebra},
  author = {Valenza, Robert J.},
  year = {1993},
  publisher = {Springer},
  doi = {10.1007/978-1-4612-0901-0},
}

@book{ziegler1995LecturesPolytopes,
  title = {Lectures on Polytopes},
  author = {Ziegler, Günter M.},
  year = {1995},
  edition = {1},
  publisher = {Springer},
  doi = {10.1007/978-1-4613-8431-1}
}

\begin{appendices}

\section{Number of Parameters in the Input}

\begin{example}\label{example::pMustBeConstant}
    In this example, we demonstrate that, if~$p$ is not considered to be constant, we can construct a family of parametric minimization problems where the cardinality of every $(1+\varepsilon)$-approximation set grows super-polynomially in the instance size.

    For even~$p$, let~$\Pi$ be a $p$-parametric minimization integer program such that
    \begin{itemize}
        \item $\Lambda=\mathbb{R}^p_{\geq0}$,
        \item $X=\left\{x\in{\{0,1\}}^p:\sum_{i=1}^p x_i=\frac{p}{2}\right\}$, and
        \item $F_\lambda(x)=\sum_{i=1}^p \lambda_i x_i$.
    \end{itemize}
    Clearly, $\langle I\rangle\in \bigO{\left(\poly\langle p\rangle\right)}$.
    Furthermore, for all~$x\in X$ and all~$\lambda\in\Lambda$, it holds that~$F_\lambda(x)\geq0$.

    For an arbitrary~$x\in X$, let~$\lambda\in\mathbb{R}_{\geq0}^p$ be the parameter vector where $\lambda_i=1-x_i$ for $i=1,\ldots,p$.
    Since either $x_i=0$ or $\lambda_i=0$, it holds that $F_\lambda(x)=F^*(\lambda)=0$.
    Conversely, for every~$x'\in X\setminus\{x\}$, there is at least one $i\in\{1,\ldots,p\}$ such that $\lambda_i=x'_i=1$ and, thus, $F_\lambda(x') \geq 1$.
    Consequently, $x$ is the only solution that can $(1+\varepsilon)$-approximate~$\Pi(\lambda)$.
    Therefore, the $(1+\varepsilon)$-approximation set is~$X$ itself.

    Let~$\mathsf{bin}$ denote the binomial coefficient.
    By construction, $\left|X\right|=\mathsf{bin}\binom{p}{p/2}$.
    It is well known that $\sum_{i=0}^p \mathsf{bin}\binom{p}{i}=2^p$ and that $\mathsf{bin}\binom{p}{p/2}$ maximizes $\mathsf{bin}\binom{p}{i}$, so $(p+1)\cdot \binom{p}{p/2}\geq 2^p$.
    Thus, we can lower bound $\left|X\right|$ in $\Omega(p^{-1}\cdot 2^p)$.
    With increasing~$p$, the encoding length of~$I$ grows polynomially in $p$, but the cardinality of every $(1+\varepsilon)$-approximation set grows exponentially in~$p$.

    Without proof, we remark that this also holds if we use the linear relaxation of~$\Pi$, where the set of feasible solutions is $\left\{x\in{[0,1]}^p:\sum_{i=1}^p x_i=\frac{p}{2}\right\}$.%
    \footnote{See Section~\ref{sec::maxNotApx}, where we argue that it is sufficient to consider only certain vertices of a parametric linear program.}
\end{example}

\section{Supplementary Lemmas for Section~\ref{sec::maxNotApx}}

\begin{lemma}\label{lemma::maxNotApx::setsEqual}
    For every $\xi\in[0,1]$, the sets
\[
\left\{\left({c^1}^\T x, {c^2}^\T x\right): A x \geq\xi b, x\in\mathbb{R}^n \right\}
\]
and
\[
\left\{\xi \left({c^1}^\T x, {c^2}^\T x\right): A x \geq b, x\in\mathbb{R}^n \right\}
\]
from Section~\ref{sec::maxNotApx} are equal.
\end{lemma}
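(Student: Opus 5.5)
For $\xi\in(0,1]$ I would prove the equality by a direct change of variables, and treat $\xi=0$ separately.

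\emph{Case $\xi>0$.} Take an element $\left({c^1}^\T x,{c^2}^\T x\right)$ of the first set, so $Ax\geq\xi b$. Put $x'\coloneqq\frac{1}{\xi}x$. Dividing the inequality by $\xi>0$ gives $Ax'\geq b$. Linearity gives $\left({c^1}^\T x,{c^2}^\T x\right)=\xi\left({c^1}^\T x',{c^2}^\T x'\right)$, so the point lies in the second set. Conversely, let $x'$ satisfy $Ax'\geq b$. Then $x\coloneqq\xi x'$ satisfies $Ax\geq\xi b$ and maps to $\xi\left({c^1}^\T x',{c^2}^\T x'\right)$, so every point of the second set lies in the first. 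The map $x'\mapsto \xi x'$ is thus a bijection between the two feasible regions that commutes with the objectives, and the sets coincide.

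\emph{Case $\xi=0$.} The second set is $\{(0,0)\}$, provided $\Xkone\neq\emptyset$, which holds since \ref{problem::multiSingleBase} has non-dominated points. The first set is the image of the cone $\{x: Ax\geq 0\}$, i.e.\ of the recession cone of $\Xkone$. So it suffices to show ${c^1}^\T r={c^2}^\T r=0$ for every $r$ with $Ar\geq 0$. The image $\Ykone$ is bounded: it is a polyhedron in $\mathbb{R}^2_{\geq0}$ with finitely many non-dominated extreme points and no unbounded maximizing direction. Hence ${c^i}^\T r\leq 0$ for all such $r$. On the other hand, $\Ykone\subseteq\mathbb{R}^2_{\geq0}$ gives ${c^i}^\T(x+tr)\geq0$ for all $t\geq0$, which forces ${c^i}^\T r\geq 0$. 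Together, ${c^i}^\T r=0$, and the first set is also $\{(0,0)\}$.

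The obstacle is exactly this case $\xi=0$. The statement is false without boundedness of the objectives on $\Xkone$, so I would explicitly invoke the standing assumption that \ref{problem::multiSingleBase} is a bounded instance. This holds for the constructions of~\cite{murty1980ComputationalComplexity,ruhe1988ComplexityResults}, possibly after adding box constraints, which does not affect the encoding length bound. If necessary, I would instead record this assumption as part of the standard modifications made to \ref{problem::multiSingleBase}.
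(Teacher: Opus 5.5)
Your proof is correct. For $\xi>0$ it uses the same substitution $x=\xi x'$ as the paper.

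The two arguments differ at $\xi=0$. The paper argues informally that all points of both sets tend to zero as $\xi\to0$, and then invokes that both sets are ``convex and bounded by construction''. You instead identify the first set at $\xi=0$ as the image $\{({c^1}^\T r,{c^2}^\T r): Ar\geq 0\}$ of the recession cone of $\Xkone$, and show that both objectives vanish on that cone. Your route makes explicit what the paper's limit heuristic leaves implicit: the set at $\xi=0$ is a cone, so it is bounded only if it equals $\{(0,0)\}$, and this is equivalent to boundedness of $\Ykone$.

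Two small remarks:
\begin{enumerate}
\item Boundedness of $\Ykone$ already gives ${c^i}^\T r=0$ directly, since otherwise ${c^i}^\T(x+tr)$ is unbounded in $t$. So your appeal to $\Ykone\subseteq\mathbb{R}^2_{\geq0}$ is only needed if you start from mere upper-boundedness of the objectives.
\item You need neither box constraints nor an extra assumption. Because $\Ykone\subseteq\mathbb{R}^2_{\geq0}$, every recession direction of $\Ykone$ lies in $\mathbb{R}^2_{\geq0}$. A nonzero such direction $d$ would give, for every $y\in\Ykone$, a point $y+d\in\Ykone$ dominating $y$, contradicting the existence of non-dominated points ($N\geq 1$). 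Hence $\Ykone$ is bounded by the standing properties of $\multiSingleBase$ alone.
\end{enumerate}
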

\begin{proof}
    Note that, by construction in Section~\ref{sec::maxNotApx}, both sets are bounded.
    Then, for~$\xi>0$, the statement of the theorem follows from the following chain of equalities:
    \begin{align*}
        & \left\{\left({c^1}^\T x, {c^2}^\T x\right): A x \geq\xi b,\, x\in\mathbb{R}^n \right\} \\
        = & \left\{\left({c^1}^\T x, {c^2}^\T x\right): A (\xi^{-1}x) \geq b,\, x\in\mathbb{R}^n \right\} \\
        = & \left\{\left({c^1}^\T (\xi y), {c^2}^\T (\xi y)\right): A y \geq b,\, \xi y= x,\, x\in\mathbb{R}^n,\, y\in\mathbb{R}^n \right\} \\
        = & \left\{\xi \left({c^1}^\T y, {c^2}^\T y\right): A y \geq b,\, y\geq0 \right\}.
    \end{align*}
    As a consequence, for $\xi \rightarrow 0$, all points in both sets approach zero.
    Since both sets are convex and bounded, they must then both be equal to~$\{0\}$ for~$\xi=0$.
\end{proof}

\begin{figure}
    \centering
    \begin{tikzpicture}[scale=1.0,
    smalledge/.style={draw=black!40, very thick},
    bigedge/.style={draw=black!55, ultra thick},
    smalldashed/.style={draw=black!40, very thick, dashed},
    bigdashed/.style={draw=black!55, ultra thick, dashed},
    lightfill/.style={draw=none, fill=black!3},
    hardfill/.style={draw=none, fill=black!8},
    verti/.style={draw=none, fill=black!40},
    vertog/.style={draw=none, fill=black!100},
]
	\draw[very thick,->] (-0.25,0) to node[pos=0.95, below] {$f_1(x)$} (6, 0);
	\draw[very thick,->] (0,-0.25)to node[pos=1.0, left] {$f_2(x)$} (0, 6.0);

    \coordinate (origin) at (0,0);
    \coordinate (yiminus) at (1,3);
    \coordinate (yi) at (2.5, 2.5);
    \coordinate (yiplus) at (3,1);
    \coordinate (bminus) at ($(yiminus)+(yi)$);
    \coordinate (b) at ($(yiminus)+(yiplus)$);
    \coordinate (bplus) at ($(yi)+(yiplus)$);

    \draw [name path=mleft] (yiminus) -- (b);
    \draw [name path=mright] (bminus) -- (yi);
    \path [name intersections={of=mleft and mright,by=interminus}];
    \draw [name path=pleft] (yiplus) -- (b);
    \draw [name path=pright] (bplus) -- (yi);
    \path [name intersections={of=pleft and pright,by=interplus}];

    \draw[lightfill] (yiminus) to (bminus) to (interminus) to cycle;
    \draw[lightfill] (yiplus) to (bplus) to (interplus) to cycle;
    \draw[lightfill] (yi) to (interminus) to (b) to (interplus) to cycle;
    \draw[hardfill] (origin) to (yiminus) to (interminus) to (yi) to (interplus) to (yiplus) to cycle;

    \draw[smalledge] (origin) to (yiminus) to (bminus) to (yi) to cycle;
    \draw[smalledge] (origin) to (yiplus) to (bplus) to (yi) to cycle;
    \draw[bigedge] (origin) to (yiminus) to (b) to (yiplus) to cycle;

    \draw[smalldashed] (yiminus) to (yi);
    \draw[smalldashed] (yiplus) to (yi);
    \draw[bigdashed] (yiminus) to (yiplus);

  \fill[verti] (origin) circle (3pt);
  \node[xshift=-7pt, yshift=-7pt] at (origin) {\small$0$};

  \fill[vertog] (yiminus) circle (3pt);
  \node[xshift=-5pt, yshift=10pt] at (yiminus) {\small$y_{-3}^{i-1}$};

  \fill[vertog] (yi) circle (3pt);
  \node[xshift=2pt, yshift=12pt] at (yi) {\small$y_{-3}^i$};

  \fill[vertog] (yiplus) circle (3pt);
  \node[xshift=15pt, yshift=-5pt] at (yiplus) {\small$y_{-3}^{i+1}$};

  \fill[verti] (bminus) circle (3pt);
  \node[xshift=3pt, yshift=7pt] at (bminus) {\small\thinmuskip=0mu\medmuskip=0mu\thickmuskip=2mu$y_{-3}^{i-1}+y_{-3}^i$};

  \fill[verti] (b) circle (3pt);
  \node[xshift=4pt, yshift=7pt] at (b) {\small\thinmuskip=0mu\medmuskip=0mu\thickmuskip=2mu$y_{-3}^{i-1}+y_{-3}^{i+1}$};

  \fill[verti] (bplus) circle (3pt);
  \node[xshift=18pt, yshift=-10pt] at (bplus) {\small\thinmuskip=0mu\medmuskip=0mu\thickmuskip=2mu$y_{-3}^i+y_{-3}^{i+1}$};

\end{tikzpicture}
    \caption{%
    Graphical interpretation of the proof of Lemma~\ref{lemma::maxNotApx::determinants}:
    The parallelepipeds spanned by $\left(y_{-3}^{i-1},y_{-3}^i\right)$, and $\left(y_{-3}^i,y^{i+1}\right)$ are drawn with thin edges, and the parallelepiped spanned by $\left(y_{-3}^{i-1},y_{-3}^{i+1}\right)$ is drawn with thick edges.
    The dashed lines indicate the separation of the parallelepipeds into two halves each.%
    \label{figure::maxNotApx::parallelepiped}
    }
\end{figure}

\begin{lemma}\label{lemma::maxNotApx::determinants}
    In Section~\ref{sec::maxNotApx}, choose any $i,j,\ell$ such that $1\leq i < j < \ell \leq N$.
    Then, it holds that
    \[
        \frac{\det(y^j,y^i)+\det(y^\ell,y^j) }{\det(y^\ell,y^i)} > 1.
    \]
\end{lemma}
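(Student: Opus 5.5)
Write $a=(y^i_1,y^i_2)$, $b=(y^j_1,y^j_2)$, $c=(y^\ell_1,y^\ell_2)$ for the first two coordinates. The claim is equivalent to
\[
\det(b,a)+\det(c,b) > \det(c,a),
\]
because the denominator $\det(y^\ell,y^i)$ is positive, as noted in Section~\ref{sec::maxNotApx}. Here $\det(y^\ell,y^i)=-\det(y^i,y^\ell)>0$ since $i<\ell$. The plan is to rewrite the difference of the two sides as a single determinant and then read off its sign from the convexity of the non-dominated extreme points.

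Bilinearity and antisymmetry of $\det$ give
\[
\det(b,a)+\det(c,b)-\det(c,a)=\det(b-a,\,c-a),
\]
as a direct expansion confirms. Geometrically, $\frac12|\det(b-a,c-a)|$ is the area of the triangle $a,b,c$; the inequality says that the two triangles $0,a,b$ and $0,b,c$ together exceed $0,a,c$, which is what the parallelepiped figure illustrates. So it suffices to show $\det(b-a,c-a)>0$. This means that, for $a,b,c$ ordered by increasing first coordinate, the point $b$ lies strictly above the line through $a$ and $c$, on the side facing away from the origin.

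For that step I would use that $y^i,y^j,y^\ell$ are distinct non-dominated extreme points of $\Ykone$ with $y_1^i<y_1^j<y_1^\ell$ and $y_2^i>y_2^j>y_2^\ell$. Suppose $\det(b-a,c-a)\le0$. Then $b$ lies on or below the segment from $a$ to $c$ in the following sense: the point $q$ on segment $[a,c]$ with first coordinate $y_1^j$ satisfies $q_2\ge y_2^j$, using $y_1^i<y_1^j<y_1^\ell$ to parametrize. Since $q\in\conv(\Ykone)=\Ykone$, either $q$ weakly dominates $b$ with $q\neq b$, contradicting non-dominance, or $q=b$, contradicting that $b$ is an extreme point, because it would be a proper convex combination of $a$ and $c$. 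Hence the determinant is strictly positive, and dividing by $\det(c,a)>0$ yields the claim.

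The main obstacle is the sign convention. I need to check that "above the chord" corresponds to a positive rather than negative $\det(b-a,c-a)$. I would verify this explicitly by writing $\det(b-a,c-a)=(b_1-a_1)(c_2-a_2)-(c_1-a_1)(b_2-a_2)$ and comparing with the line equation: dividing by $c_1-a_1>0$ shows it equals $(c_1-a_1)\bigl(\text{chord height at }b_1-b_2\bigr)\cdot(-1)$. So positivity means $b_2$ exceeds the chord height, as required.
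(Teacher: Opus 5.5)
Your strategy works, but the write-up has two sign errors that cancel each other, so two intermediate claims are false as stated. With the paper's convention $\det(u,v)=u_1v_2-v_1u_2$, expanding gives $\det(b-a,c-a)=\det(b,c)+\det(a,b)+\det(c,a)$. Hence
\[
\det(b,a)+\det(c,b)-\det(c,a)=-\det(b-a,c-a)=\det(c-a,b-a),
\]
not $+\det(b-a,c-a)$. Similarly, let $h=a_2+\frac{b_1-a_1}{c_1-a_1}(c_2-a_2)$ be the height of the chord at $b_1$. Then $\det(b-a,c-a)=(c_1-a_1)(h-b_2)$, with no extra factor $-1$.

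The configuration $a=(1,3)$, $b=(\tfrac52,\tfrac52)$, $c=(3,1)$ shows both problems. The left-hand side above equals $5+5-8=2$, whereas $\det(b-a,c-a)=-2$. Also, $b$ lies above the chord ($h=\tfrac32<b_2$) even though $\det(b-a,c-a)\le 0$. So ``it suffices to show $\det(b-a,c-a)>0$'' and ``if $\det(b-a,c-a)\le 0$ then $q_2\ge b_2$'' are both wrong. Only their combination, ``it suffices to show $b_2>h$'', is right.

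The repair is to replace $\det(b-a,c-a)$ by $\det(c-a,b-a)=(c_1-a_1)(b_2-h)$ throughout. The claim then becomes exactly $b_2>h$. Your contradiction argument proves precisely this: the point $q\in[a,c]\subseteq\Ykone$ with $q_1=b_1$ would either dominate $b$, or coincide with it and make $b$ a proper convex combination of $a$ and $c$. With that correction your proof is complete.

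Compared with the paper: it reads $\tfrac12\det(y^j,y^i)$ etc.\ as triangle areas and glues the triangles $0ab$ and $0bc$ into $\conv(0,a,b,c)$. It then uses that this set strictly contains the triangle $0ac$ because $a$, $b$, $c$ are all vertices, which it only asserts ``by construction''. Your corrected identity is the shoelace form of the same decomposition: the excess is twice the area of triangle $abc$. Your chord argument also explicitly justifies the vertex property, which the paper's gluing step relies on without proof.
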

\begin{proof}
    In this proof, for every $i=1,\ldots,N$, let~$y_{-3}^i$ denote the projection of a point to its first two components, i.e., $y_{-3}^i\coloneqq(y_1^i,y_2^i)^\T$.
    As a shorthand, for finitely many points $w^1,\ldots, w^t\in\mathbb{R}^2$, we denote the volume of the polyhedron $\conv{\left(w^1,\ldots,w^t\right)}$ by $\vol{\left(w^1,\ldots,w^t\right)}$.

    We now use the geometric properties of the determinant to prove Lemma~\ref{lemma::maxNotApx::determinants}.
    Figure~\ref{figure::maxNotApx::parallelepiped} visualizes the idea of the proof.
    The determinant $\det(y^j,y^i)$ is equivalent to the volume of the parallelepiped spanned by the points~$y_{-3}^i)$ and~$y_{-3}^j$, i.e., $\det(y^j,y^i)=\vol{\left(0,y_{-3}^i,y_{-3}^j,y_{-3}^i+y_{-3}^j \right)}$~\cite{valenza1993LinearAlgebra}.
    Then, we get $\frac{1}{2}\det(y^j,y^i)=\vol{\left(0,y_{-3}^i,y_{-3}^j\right)}$ by separating the parallelepiped into two halves.
    This can be done analogously for the pair~$y^\ell$ and~$y^i$, and for the pair~$y^\ell$ and~$y^j$.
    Therefore,
    \[
        \frac{1}{2}\left(\det(y^j,y^i)+\det(y^\ell,y^j)\right)=\vol\left(0,y_{-3}^i,y_{-3}^j\right) + \vol\left(0,y_{-3}^j,y_{-3}^\ell\right).
    \]
    By construction, $y^i$ and~$y^\ell$ are on different sides of the $0$-$y_{-3}^j$-line, and, thus, the two polyhedra $\conv{\left(0,y_{-3}^i,y_{-3}^j\right)}$ and $\conv{\left(0,y_{-3}^j,y_{-3}^\ell\right)}$ only intersect in their boundaries.
    This leads to
    \[
        \vol{\left(0,y_{-3}^i,y_{-3}^j\right)} + \vol{\left(0,y_{-3}^j,y_{-3}^\ell\right)}=\vol{\left(0, y_{-3}^i,y_{-3}^j,y_{-3}^\ell\right)}.
    \]

    By construction, $y_{-3}^i$, $y_{-3}^j$, and~$y_{-3}^\ell$ are all vertices of this polyhedron and, thus,
    \[
    \conv{\left(0, y_{-3}^i,y_{-3}^\ell\right)}\subset\conv{\left(0, y_{-3}^i,y_{-3}^j,y_{-3}^\ell\right)}.
    \]
    This leads to
    \[
    \frac{1}{2}\det\left(y^\ell,y^i\right)=\vol{\left(0, y_{-3}^i,y_{-3}^\ell\right)}<\vol{\left(0, y_{-3}^i,y_{-3}^j,y_{-3}^\ell\right)}.
    \]

    Therefore,
    \[
    \frac{1}{2}\left(\det(y^j,y^i)+\det(y^\ell,y^j)\right) > \frac{1}{2}\det\left(y^\ell,y^i\right),
    \]
    which can be transformed into the desired statement.
\end{proof}

\end{appendices}

\end{document}